\journal{Journal of Multivariate Analysis}
\theoremstyle{plain}
\newtheorem{theorem}{Theorem}
\newtheorem{proposition}{Proposition}
\newtheorem{lemma}{Lemma}
\newtheorem{corollary}{Corollary}
\newtheorem{remark}{Remark}
\newcommand{\N}{\mathbb{N}}
\newcommand{\R}{\mathbb{R}}
\newcommand{\QQ}{\mathbb{Q}}
\newcommand{\PP}{\mathbb{P}}
\newcommand{\EE}{\mathbb{E}}
\newcommand{\BB}{\mathbb{B}\mathrm{ias}}
\newcommand{\VV}{\mathbb{V}\mathrm{ar}}
\newcommand{\bb}[1]{\boldsymbol{#1}}
\newcommand{\vecp}{\mathrm{vecp}}
\newcommand{\mat}[1]{\mathbb{#1}}
\newcommand{\OO}{\mathcal O}
\newcommand{\oo}{\mathrm{o}}
\newcommand{\leqdef}{\vcentcolon=}
\newcommand{\reqdef}{=\vcentcolon}
\newcommand{\rd}{{\rm d}}
\newcommand{\ind}{\mathds{1}}
\newcommand{\e}{\varepsilon}
\begin{document}

\begin{frontmatter}

    \title{A symmetric matrix-variate normal local approximation for the Wishart distribution and some applications}%

    \author[a1,a2]{Fr\'ed\'eric Ouimet\texorpdfstring{\corref{cor1}\fnref{fn1}}{)}}%

    \address[a1]{California Institute of Technology, Pasadena, CA 91125, USA.}%
    \address[a2]{McGill University, Montreal, QC H3A 0B9, Canada.}%

    \cortext[cor1]{Corresponding author}%
    \ead{frederic.ouimet2@mcgill.ca}%


    \begin{abstract}
        The noncentral Wishart distribution has become more mainstream in statistics as the prevalence of applications involving sample covariances with underlying multivariate Gaussian populations as dramatically increased since the advent of computers.
        Multiple sources in the literature deal with local approximations of the noncentral Wishart distribution with respect to its central counterpart.
        However, no source has yet developed explicit local approximations for the (central) Wishart distribution in terms of a normal analogue, which is important since Gaussian distributions are at the heart of the asymptotic theory for many statistical methods.
        In this paper, we prove a precise asymptotic expansion for the ratio of the Wishart density to the symmetric matrix-variate normal density with the same mean and covariances. The result is then used to derive an upper bound on the total variation between the corresponding probability measures and to find the pointwise variance of a new density estimator on the space of positive definite matrices with a Wishart asymmetric kernel.
        For the sake of completeness, we also find expressions for the pointwise bias of our new estimator, the pointwise variance as we move towards the boundary of its support, the mean squared error, the mean integrated squared error away from the boundary, and we prove its asymptotic normality.
    \end{abstract}

    \begin{keyword} 
        asymmetric kernel, asymptotic statistics, density estimation, expansion, local approximation, matrix-variate normal, multivariate associated kernel, normal approximation, smoothing,  total variation, Wishart distribution
        \MSC[2020]{Primary: 62E20 Secondary: 62H10, 62H12, 62B15, 62G05, 62G07}
    \end{keyword}

\end{frontmatter}

\section{Introduction}\label{sec:intro}

    Let $d\in \N$ be given.
    Define the space of (real) symmetric matrices of size $d\times d$ and the space of (real symmetric) positive definite matrices of size $d\times d$ as follows:
    \begin{align}
        &\mathcal{S}^{\hspace{0.3mm}d} \leqdef \left\{\mat{M}\in \R^{d\times d} : \text{$\mat{M}$ is symmetric}\right\}, \\
        &\mathcal{S}_{++}^{\hspace{0.3mm}d} \leqdef \left\{\mat{M}\in \R^{d\times d} : \text{$\mat{M}$ is symmetric and positive definite}\right\}. \label{eq:def:positive.definite.matrices}
    \end{align}
    For $\nu > d - 1$ and $\mat{S}\in \mathcal{S}_{++}^{\hspace{0.3mm}d}$, the density function of the $\mathrm{Wishart}_d(\nu,\mat{S})$ distribution is defined by
    \begin{equation}\label{eq:Wishart.density}
        \begin{aligned}
            K_{\nu,\mat{S}}(\mat{X})
            &\leqdef \frac{|\mat{S}^{-1} \mat{X}|^{\nu/2 - (d + 1)/2} \exp\left(-\frac{1}{2}\mathrm{tr}(\mat{S}^{-1} \mat{X})\right)}{2^{\nu d / 2} |\mat{S}|^{(d + 1)/2} \pi^{\hspace{0.3mm}d(d-1)/4} \prod_{i=1}^d \Gamma(\frac{1}{2} (\nu - (i + 1)) + 1)}, \quad \mat{X}\in \mathcal{S}_{++}^{\hspace{0.3mm}d},
        \end{aligned}
    \end{equation}
    where $\nu$ is the number of degrees of freedom, $\mat{S}$ is the scale matrix, and
    \begin{equation}
        \Gamma(a) \leqdef \int_0^{\infty} t^{a - 1} e^{-t} \rd t, \quad a > 0,
    \end{equation}
    denotes the Euler gamma function.
    The mean and covariance matrix for the vectorization of $\mat{W}\sim \mathrm{Wishart}_d(\nu,\mat{S})$, namely
    \begin{equation}\label{eq:vectorization}
        \vecp(\mat{W}) \leqdef (\mat{W}_{11}, \mat{W}_{12}, \mat{W}_{22}, \dots, \mat{W}_{1d}, \dots, \mat{W}_{dd})^{\top},
    \end{equation}
    ($\mathrm{vecp}(\cdot)$ is the operator that stacks the columns of the upper triangular portion of a symmetric matrix on top of each other) are well known to be:
    \begin{equation}\label{eq:expectation.explicit.estimate}
        \EE[\vecp(\mat{W})] = \nu \, \vecp(\mat{S}) \quad \text{(alternatively, $\EE[\mat{W}] = \nu \hspace{0.3mm} \mat{S}$)}
    \end{equation}
    and
    \begin{equation}\label{eq:covariance.explicit.estimate}
        \VV(\vecp(\mat{W})) = B_d^{\top} (\hspace{-0.5mm} \sqrt{2\nu} \, \mat{S} \otimes \hspace{-1mm} \sqrt{2\nu} \, \mat{S}) B_d,
    \end{equation}
    where $\mathrm{I}_d$ is the identity matrix of order $d$, $B_d$ is a $d^{\hspace{0.3mm}2} \times \frac{1}{2} d(d + 1)$ transition matrix (see \citet[p.11]{{Gupta_Nagar_1999}} for the precise definition), and $\otimes$ denotes the Kronecker product.

    Multiple sources in the literature deal with local approximations of the noncentral Wishart distribution with respect to the (central) Wishart distribution, see, e.g., \citet{MR326925,MR648656,MR1370289,doi:10.1080/03610919908813585}.
    However, no source has yet developed explicit local approximations for the (central) Wishart distribution in terms of a normal analogue, which is important since Gaussian distributions are at the heart of the asymptotic theory for many statistical methods.

    The main goal of our paper (Theorem~\ref{thm:p.k.expansion}) is to establish an asymptotic expansion for the ratio of the Wishart density \eqref{eq:Wishart.density} to the symmetric matrix-variate normal (SMN) density with the same mean and covariances.
    According to \citet[Eq.(2.5.8)]{Gupta_Nagar_1999}, the density of the $\mathrm{SMN}_{d\times d}(\nu \hspace{0.3mm} \mat{S}, B_d^{\top} (\hspace{-0.5mm} \sqrt{2\nu} \, \mat{S} \otimes \hspace{-1mm} \sqrt{2\nu} \, \mat{S}) B_d)$ distribution is
    \begin{equation}\label{eq:sym.matrix.normal.density}
        g_{\nu,\mat{S}}(\mat{X}) = \frac{\exp\left(-\frac{1}{2} \mathrm{tr}(\Delta_{\nu,\mat{S}}^2)\right)}{\sqrt{(2\pi)^{d(d + 1)/2} \, |B_d^{\top} (\hspace{-0.5mm} \sqrt{2\nu} \, \mat{S} \otimes \hspace{-1mm} \sqrt{2\nu} \, \mat{S}) B_d|}} = \frac{\exp\left(-\frac{1}{2} \mathrm{tr}(\Delta_{\nu,\mat{S}}^2)\right)}{\sqrt{2^d \pi^{\hspace{0.3mm}d(d + 1) / 2} |\hspace{-0.5mm} \sqrt{2\nu} \, \mat{S}|^{d + 1}}}, \quad \mat{X}\in \mathcal{S}^{\hspace{0.3mm}d},
    \end{equation}
    where the last equality follows from \citet[Eq.(1.2.18)]{Gupta_Nagar_1999}, and
    \begin{equation}
        \Delta_{\nu,\mat{S}} \leqdef (\hspace{-0.5mm} \sqrt{2\nu} \, \mat{S})^{-1/2} (\mat{X} - \nu \hspace{0.3mm} \mat{S}) (\hspace{-0.5mm} \sqrt{2\nu} \, \mat{S})^{-1/2}.
    \end{equation}
    Rewritings of the density \eqref{eq:sym.matrix.normal.density} are provided on page 71 of \citet{Gupta_Nagar_1999} using the vectorization operators $\mathrm{vec}(\cdot)$ and $\mathrm{vecp}(\cdot)$.
    For example, we can rewrite $g_{\nu,\mat{S}}(\mat{X})$ in terms of $\mathrm{vecp}(\mat{X})$ as follows:
    \begin{equation}\label{eq:sym.matrix.normal.density.vecp}
        g_{\nu,\mat{S}}(\mat{X}) = \frac{\exp\left(-\frac{1}{2} (\mathrm{vecp}(\mat{X}) - \mathrm{vecp}(\nu \hspace{0.3mm} \mat{S}))^{\top} \big[B_d^{\top} (\hspace{-0.5mm} \sqrt{2\nu} \, \mat{S} \otimes \hspace{-1mm} \sqrt{2\nu} \, \mat{S}) B_d\big]^{-1} (\mathrm{vecp}(\mat{X}) - \mathrm{vecp}(\nu \hspace{0.3mm} \mat{S}))\right)}{\sqrt{(2\pi)^{d(d + 1)/2} \, |B_d^{\top} (\hspace{-0.5mm} \sqrt{2\nu} \, \mat{S} \otimes \hspace{-1mm} \sqrt{2\nu} \, \mat{S}) B_d|}}, \quad \mat{X}\in \mathcal{S}^{\hspace{0.3mm}d}.
    \end{equation}

    To give a bit of practical motivations for the SMN distribution \eqref{eq:sym.matrix.normal.density}, note that noise in the estimate of individual voxels of diffusion tensor magnetic resonance imaging (DT-MRI) data has been shown to be well modeled by the $\mathrm{SMN}_{3\times 3}$ distribution in \cite{Pajevic_Basser_1999,doi:10.1002nbm.783,doi:10.1016/s1090-7807(02)00178-7}. The SMN voxel distributions were combined into a tensor-variate normal distribution in \cite{doi:10.1109/TMI.2003.815059,doi:10.1137/16M1098693}, which could help to predict how the whole image (not just individual voxels) changes when shearing and dilation operations are applied in image wearing and registration problems, see \citet{doi:10.1109/42.963816}. In \cite{MR2485016}, maximum likelihood estimators and likelihood ratio tests are developed for the eigenvalues and eigenvectors of a form of the SMN distribution with an orthogonally invariant covariance structure, both in one-sample problems (for example, in image interpolation) and two-sample problems (when comparing images) and under a broad variety of assumptions.
    This work extended significantly previous results of \citet{MR131312}.
    In \cite{MR2485016}, it is also mentioned that the polarization pattern of cosmic microwave background (CMB) radiation measurements can be represented by $2\times 2$ positive definite matrices, see the primer by \citet{doi:10.1016/S1384-1076(97)00022-5}. In a very recent and interesting paper, \citet{doi:10.1093/mnras/stab368} presented evidence for the Gaussianity of the local extrema of CMB maps.
    We can also mention \cite{doi:10.1016/j.patcog.2018.02.025}, where finite mixtures of skewed SMN distributions were applied to an image recognition problem.

    In general, we know that the Gaussian distribution is an attractor for sums of i.i.d.\ random variables with finite variance, which makes many estimators in statistics asymptotically normal. Similarly, we expect the SMN distribution \eqref{eq:sym.matrix.normal.density} to be an attractor for sums of i.i.d.\ random symmetric matrices with finite variances, thus including many estimators such as sample covariance matrices and score statistics for symmetric matrix parameters.
    In particular, if a given statistic or estimator is a function of the components of a sample covariance matrix for i.i.d.\ observations coming from a multivariate Gaussian population, then we could study its large sample properties (such as its moments) using Theorem~\ref{thm:p.k.expansion} (for example, by turning a Wishart-moments estimation problem into a Gaussian-moments estimation problem).

    In Section~\ref{sec:application}, we use our asymptotic expansion (Theorem~\ref{thm:p.k.expansion}) to find the pointwise variance of a new density estimator on the space of positive definite matrices with a Wishart asymmetric kernel (Section~\ref{sec:Wishart.asymmetric.kernel}), and we derive an upper bound on the total variation between the probability measures on $\mathcal{S}^{\hspace{0.3mm}d}$ induced by \eqref{eq:Wishart.density} and \eqref{eq:sym.matrix.normal.density} (Section~\ref{sec:total.variation}).
    These are two examples of applications, but it is clear that there could be many others under the proper context.

    \begin{remark}[Notation]
        {\normalfont
        Throughout the paper, $a = \OO(b)$ means that $\limsup |a / b| < C$ as $\nu\to \infty$ (or as $b\to 0$ or as $n\to \infty$ in Section~\ref{sec:Wishart.asymmetric.kernel}, depending on the context), where $C > 0$ is a universal constant.
        Whenever $C$ might depend on some parameter, we add a subscript (for example, $a = \OO_d(b)$).
        Similarly, $a = \oo(b)$ means that $\lim |a / b| =~0$, and subscripts indicate which parameters the convergence rate can depend on.
        The notation $\mathrm{tr}(\cdot)$ will denote the trace operator for matrices and $|\cdot|$ their determinant.
        For a matrix $\mat{M}\in \R^{d\times d}$ that is diagonalizable, $\lambda_1(\mat{M}) \leq \dots \leq \lambda_d(\mat{M})$ will denote its eigenvalues, and we let $\bb{\lambda}(\mat{M}) \leqdef (\lambda_1(\mat{M}), \dots, \lambda_d(\mat{M}))^{\top}$.

        In Section~\ref{sec:Wishart.asymmetric.kernel} and the related proofs, the symbol $\mathscr{D}$ over an arrow `$\longrightarrow$' will denote the convergence in distribution (or law). We will also use the shorthand $[d] \leqdef \{1,\dots,d\}$ in several places.
        Finally, the bandwidth parameter $b = b(n)$ will always be implicitly a function of the number of observations, the only exceptions being in Theorem~\ref{thm:bias} and the related proof.
        }
    \end{remark}

\section{Main result}\label{sec:main.result}

    Below, we prove an asymptotic expansion for the ratio of the Wishart density to the symmetric matrix-variate normal (SMN) density with the same mean and covariances.
    This result is (much) stronger than the result found, for example, in \cite[Theorem~3.6.2]{MR1990662} or \cite[Theorem 2.5.1]{MR2640807}, which says that for a sequence of i.i.d.\ multivariate Gaussian observations $\bb{X}_1, \dots, \bb{X}_n\sim \mathcal{N}_d(\bb{\mu},\mat{S})$ with $\bb{\mu}\in \R^d$ and $\mat{S}\in \mathcal{S}_{++}^{\hspace{0.3mm}d}$, the scaled and recentered sample covariance matrix of $\bb{X}_1, \dots, \bb{X}_n$ converges in law to a SMN distribution, specifically,
    \begin{equation}\label{eq:scaled.covariance.convergence}
        n^{-1/2} \left[\sum_{i=1}^n (\bb{X}_i - \bb{\mu}) (\bb{X}_i - \bb{\mu})^{\top} \hspace{-0.5mm} - n \, \mat{S}\right] \stackrel{\mathscr{D}}{\longrightarrow} \mathrm{SMN}_{d\times d}(0_{d\times d}, 2 \hspace{0.3mm} B_d^{\top} (\mat{S} \otimes \mat{S}) \hspace{0.3mm} B_d), \quad n\to \infty.
    \end{equation}
    The result in Theorem~\ref{thm:p.k.expansion} is stronger than \eqref{eq:scaled.covariance.convergence} since it is well known that $\sum_{i=1}^n (\bb{X}_i - \bb{\mu}) (\bb{X}_i - \bb{\mu})^{\top}\sim \mathrm{Wishart}\hspace{0.3mm}(n, \mat{S})$ in this context.

    \begin{theorem}\label{thm:p.k.expansion}
        Let $\nu > d - 1$ and $\mat{S}\in \mathcal{S}_{++}^{\hspace{0.3mm}d}$ be given.
        Pick any $\eta\in (0,1)$ and let
        \begin{equation}\label{eq:thm:p.k.expansion.condition}
            B_{\nu,\mat{S}}(\eta) \leqdef \left\{\mat{X}\in \mathcal{S}_{++}^{\hspace{0.3mm}d} : \max_{1 \leq i \leq d} \left|\sqrt{2 / \nu} \, \lambda_i(\Delta_{\nu,\mat{S}})\right| \leq \eta \, \nu^{-1/3}\right\}
        \end{equation}
        denote the bulk of the Wishart distribution.
        Then, as $\nu\to \infty$ and uniformly for $\mat{X}\in B_{\nu,\mat{S}}(\eta)$, we have
        \begin{equation}\label{eq:LLT.order.2.log}
            \begin{aligned}
                \log \left(\frac{K_{\nu,\mat{S}}(\mat{X})}{g_{\nu,\mat{S}}(\mat{X})}\right)
                &= \nu^{-1/2} \cdot \Bigg\{\frac{\sqrt{2}}{3} \, \mathrm{tr}(\Delta_{\nu,\mat{S}}^3) - \frac{d + 1}{\sqrt{2}} \, \mathrm{tr}(\Delta_{\nu,\mat{S}})\Bigg\} + \nu^{-1} \cdot \left\{- \frac{1}{2} \, \mathrm{tr}(\Delta_{\nu,\mat{S}}^4) + \frac{d + 1}{2} \, \mathrm{tr}(\Delta_{\nu,\mat{S}}^2) - \left(\frac{d \, (2 d^{\hspace{0.3mm}2} + 3 d - 5)}{24} + \frac{d}{6}\right)\right\} \\
                &\quad+ \OO_{d,\eta}\left(\frac{1 + \max_{1 \leq i \leq d} |\lambda_i(\Delta_{\nu,\mat{S}})|^5}{\nu^{3/2}}\right).
            \end{aligned}
        \end{equation}
        Furthermore,
        \begin{equation}\label{eq:LLT.order.2}
            \begin{aligned}
                \frac{K_{\nu,\mat{S}}(\mat{X})}{g_{\nu,\mat{S}}(\mat{X})} = 1
                &+ \nu^{-1/2} \cdot \Bigg\{\frac{\sqrt{2}}{3} \, \mathrm{tr}(\Delta_{\nu,\mat{S}}^3) - \frac{d + 1}{\sqrt{2}} \, \mathrm{tr}(\Delta_{\nu,\mat{S}})\Bigg\} + \nu^{-1} \cdot
                    \left\{\frac{1}{9} \, \left(\mathrm{tr}(\Delta_{\nu,\mat{S}}^3)\right)^2 - \frac{d + 1}{3} \, \mathrm{tr}(\Delta_{\nu,\mat{S}}^3) \, \mathrm{tr}(\Delta_{\nu,\mat{S}}) + \frac{(d + 1)^2}{4} \, \left(\mathrm{tr}(\Delta_{\nu,\mat{S}})\right)^2 \right. \\
                &\quad \left.- \frac{1}{2} \, \mathrm{tr}(\Delta_{\nu,\mat{S}}^4) + \frac{d + 1}{2} \, \mathrm{tr}(\Delta_{\nu,\mat{S}}^2) - \left(\frac{d \, (2 d^{\hspace{0.3mm}2} + 3 d - 5)}{24} + \frac{d}{6}\right)\right\} + \OO_{d,\eta}\left(\frac{1 + \max_{1 \leq i \leq d} |\lambda_i(\Delta_{\nu,\mat{S}})|^9}{\nu^{3/2}}\right).
            \end{aligned}
        \end{equation}
    \end{theorem}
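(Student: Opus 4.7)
The plan is to take logarithms of the two densities, subtract, and separate the result into an $\mat{X}$-dependent piece (a polynomial in the eigenvalues of $\Delta_{\nu,\mat{S}}$) and a normalizing-constant piece depending only on $\nu$ and $d$. The key preliminary reduction is to observe that $\mat{S}^{-1/2}\mat{X}\mat{S}^{-1/2} = \nu\hspace{0.3mm}\mathrm{I}_d + \sqrt{2\nu}\,\Delta_{\nu,\mat{S}}$, which yields the identities $\mathrm{tr}(\mat{S}^{-1}\mat{X}) = d\hspace{0.3mm}\nu + \sqrt{2\nu}\,\mathrm{tr}(\Delta_{\nu,\mat{S}})$ and $\log|\mat{S}^{-1}\mat{X}| = d\log\nu + \log|\mathrm{I}_d + \sqrt{2/\nu}\,\Delta_{\nu,\mat{S}}|$, so all $\mat{X}$-dependence passes through the single matrix $\sqrt{2/\nu}\,\Delta_{\nu,\mat{S}}$.

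For the $\mat{X}$-dependent piece, I would expand the log-determinant via the series $\log|\mathrm{I}_d + \mat{M}| = \sum_{k\geq 1}(-1)^{k-1}\,\mathrm{tr}(\mat{M}^k)/k$ with $\mat{M} = \sqrt{2/\nu}\,\Delta_{\nu,\mat{S}}$, truncated at $k = 4$. The bulk condition $|\sqrt{2/\nu}\,\lambda_i(\Delta_{\nu,\mat{S}})| \leq \eta\nu^{-1/3}$ keeps the spectral radius of $\mat{M}$ below $\eta\nu^{-1/3} < 1$ for $\nu$ large and gives a geometric tail bound, so that after multiplying by $(\nu - d - 1)/2$ the remainder is controlled by $C_{d,\eta}\,\nu^{-3/2}\max_i|\lambda_i(\Delta_{\nu,\mat{S}})|^5$. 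Combining the resulting polynomial with $-\tfrac{1}{2}\mathrm{tr}(\mat{S}^{-1}\mat{X}) = -d\nu/2 - \sqrt{\nu/2}\,\mathrm{tr}(\Delta_{\nu,\mat{S}})$ from $\log K_{\nu,\mat{S}}$ and with $+\tfrac{1}{2}\mathrm{tr}(\Delta_{\nu,\mat{S}}^2)$ from $-\log g_{\nu,\mat{S}}$, the $\sqrt{\nu/2}\,\mathrm{tr}(\Delta_{\nu,\mat{S}})$ and $-\tfrac{1}{2}\mathrm{tr}(\Delta_{\nu,\mat{S}}^2)$ contributions cancel, and the surviving coefficients at orders $\nu^{-1/2}$ and $\nu^{-1}$ reproduce the polynomial-in-$\Delta_{\nu,\mat{S}}$ part of \eqref{eq:LLT.order.2.log}.

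For the constant piece, I would apply Stirling's expansion $\log\Gamma(z) = (z-\tfrac{1}{2})\log z - z + \tfrac{1}{2}\log(2\pi) + 1/(12 z) + O(z^{-3})$ to each $\Gamma((\nu-i+1)/2)$, Taylor expand $\log((\nu-i+1)/2)$ around $\log(\nu/2)$ to the required precision, and sum over $i = 1, \ldots, d$ using the closed-form identities $\sum_{i=1}^d (1-i)/2 = -d(d-1)/4$ and $\sum_{i=1}^d ((1-i)/2)^2 = d(d-1)(2d-1)/24$. The $\nu\log\nu$, $\log\nu$, $\nu$, $\log 2$, and $\log\pi$ pieces emerging from the Stirling sum should then cancel exactly against the analogous pieces coming from $(\nu-d-1)/2 \cdot d\log\nu$, $(\nu d/2)\log 2$, $(d(d+1)/4)\log(2\nu)$, and the $\log\pi$ and $\log|\mat{S}|$ terms in $\log K_{\nu,\mat{S}}$ and $-\log g_{\nu,\mat{S}}$, leaving only $-d/(6\nu) - d(d-1)(2d+5)/(24\nu) + O(\nu^{-2})$; this matches the claimed $\nu^{-1}$ constant via the identity $d(d-1)(2d+5)/24 = d(2d^{\hspace{0.3mm}2}+3d-5)/24$.

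Finally, for \eqref{eq:LLT.order.2} I would rewrite \eqref{eq:LLT.order.2.log} as $\log(K_{\nu,\mat{S}}/g_{\nu,\mat{S}}) = A\nu^{-1/2} + B\nu^{-1} + R$ and apply $e^x = 1 + x + x^2/2 + O(|x|^3)$; the cross term $A^2/(2\nu)$ produces precisely the $\frac{1}{9}(\mathrm{tr}(\Delta_{\nu,\mat{S}}^3))^2$, $-\frac{d+1}{3}\mathrm{tr}(\Delta_{\nu,\mat{S}}^3)\mathrm{tr}(\Delta_{\nu,\mat{S}})$, and $\frac{(d+1)^2}{4}(\mathrm{tr}(\Delta_{\nu,\mat{S}}))^2$ additions appearing at $\nu^{-1}$ in \eqref{eq:LLT.order.2}, while the cubic Taylor remainder and the neglected higher cross terms are absorbed into $\OO_{d,\eta}(\nu^{-3/2}(1+\max_i|\lambda_i(\Delta_{\nu,\mat{S}})|^9))$ using $\max_i|\lambda_i(\Delta_{\nu,\mat{S}})| \leq (\eta/\sqrt{2})\,\nu^{1/6}$ in the bulk. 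The main obstacle is not any individual calculation but the bookkeeping: tracking several small contributions (Stirling's $O(z^{-3})$ remainders, the geometric tail of the log-determinant series, and the $\nu^{-2}\mathrm{tr}(\Delta_{\nu,\mat{S}}^4)$ piece generated by pairing the factor $-(d+1)/2$ with the $k=4$ log-determinant term) and confirming via the bulk eigenvalue bound that each really fits inside the stated $\nu^{-3/2}$-level error envelope.
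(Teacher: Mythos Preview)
Your proposal is correct and follows essentially the same route as the paper: the same reduction $\mat{S}^{-1/2}\mat{X}\mat{S}^{-1/2} = \nu(\mathrm{I}_d + \sqrt{2/\nu}\,\Delta_{\nu,\mat{S}})$, the same truncated log-determinant expansion (the paper writes it eigenvalue by eigenvalue rather than via $\mathrm{tr}(\mat{M}^k)$, which is equivalent), the same Stirling-plus-Taylor treatment of the gamma constants yielding the $d(2d^{2}+3d-5)/24 + d/6$ term, and the same exponentiation $e^y = 1 + y + y^2/2 + O(y^3)$ to pass from \eqref{eq:LLT.order.2.log} to \eqref{eq:LLT.order.2}. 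The only cosmetic difference is that the paper groups the normalizing constant into two separate products before applying Stirling, whereas you propose handling all $\Gamma$-factors at once; both bookkeepings lead to the same $\nu^{-1}$ constant.
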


    As a direct consequence of Theorem~\ref{thm:p.k.expansion}, we obtain expansions for the ratio and log-ratio of the density function for a multivariate bijective mapping $\bb{h}(\cdot)$ applied to a Wishart random matrix to the density function of the same mapping applied to the corresponding SMN random matrix. In particular, the corollary below provides an asymptotic expansion for the density of a bijective mapping applied to a sample covariance matrix for i.i.d.\ observations coming from a multivariate Gaussian population.

    \begin{corollary}\label{cor:LLT}
        Let $\nu > d - 1$ and $\mat{S}\in \mathcal{S}_{++}^{\hspace{0.3mm}d}$ be given, and let $\mat{W}\sim \mathrm{Wishart}_d(\nu,\mat{S})$ and $\mat{N}\sim \mathrm{SMN}_{d\times d}(\nu \hspace{0.3mm} \mat{S}, B_d^{\top} (\hspace{-0.5mm} \sqrt{2 \nu} \, \mat{S} \otimes \hspace{-1mm} \sqrt{2 \nu} \, \mat{S}) B_d)$.
        Let $\bb{h}(\cdot)$ be a one-to-one mapping from an open subset $\mathcal{D}$ of $\mathcal{S}_{++}^{\hspace{0.3mm}d}$ onto a subset $\mathcal{R}$ of $\R^{d (d + 1) / 2}$. Assume further that $\bb{h}$ has continuous partial derivatives on $\mathcal{D}$ and its Jacobian determinant $\big|\frac{\rd}{\rd \, \mathrm{vecp}(\mat{X})} \bb{h}(\mat{X})\big|$ is non-zero for all $\mat{X}\in \mathcal{D}$.
        Define
        \begin{equation}
            \widetilde{\Delta}_{\nu,\mat{S}} = (\hspace{-0.5mm} \sqrt{2\nu} \, \mat{S})^{-1/2} (\bb{h}^{-1}(\bb{y}) - \nu \hspace{0.3mm} \mat{S}) (\hspace{-0.5mm} \sqrt{2\nu} \, \mat{S})^{-1/2}, \quad \bb{y}\in \mathcal{R},
        \end{equation}
        and denote by $f_{\bb{h}(\mat{W})}$ and $f_{\bb{h}(\mat{N})}$ the density functions of $\bb{h}(\mat{W})$ and $\bb{h}(\mat{N})$, respectively.
        Fix any $\eta\in (0,1)$, then we have, as $\nu\to \infty$, and uniformly for $\bb{y}\in \mathcal{R}$ such that $\bb{h}^{-1}(\bb{y})\in B_{\nu,\mat{S}}(\eta)$,
        \begin{equation}\label{eq:LLT.order.2.log.function.corollary}
            \begin{aligned}
                \log \left(\frac{f_{\bb{h}(\mat{W})}(\bb{y})}{f_{\bb{h}(\mat{N})}(\bb{y})}\right)
                &= \nu^{-1/2} \cdot \Bigg\{\frac{\sqrt{2}}{3} \, \mathrm{tr}(\widetilde{\Delta}_{\nu,\mat{S}}^3) - \frac{d + 1}{\sqrt{2}} \, \mathrm{tr}(\widetilde{\Delta}_{\nu,\mat{S}})\Bigg\} + \nu^{-1} \cdot \left\{- \frac{1}{2} \, \mathrm{tr}(\widetilde{\Delta}_{\nu,\mat{S}}^4) + \frac{d + 1}{2} \, \mathrm{tr}(\widetilde{\Delta}_{\nu,\mat{S}}^2) - \left(\frac{d \, (2 d^{\hspace{0.3mm}2} + 3 d - 5)}{24} + \frac{d}{6}\right)\right\} \\
                &\quad+ \OO_{d,\eta}\left(\frac{1 + \max_{1 \leq i \leq d} |\lambda_i(\widetilde{\Delta}_{\nu,\mat{S}})|^5}{\nu^{3/2}}\right),
            \end{aligned}
        \end{equation}
        and
        \begin{equation}\label{eq:LLT.order.2.function.corollary}
            \begin{aligned}
                \frac{f_{\bb{h}(\mat{W})}(\bb{y})}{f_{\bb{h}(\mat{N})}(\bb{y})} = 1
                &+ \nu^{-1/2} \cdot \Bigg\{\frac{\sqrt{2}}{3} \, \mathrm{tr}(\widetilde{\Delta}_{\nu,\mat{S}}^3) - \frac{d + 1}{\sqrt{2}} \, \mathrm{tr}(\widetilde{\Delta}_{\nu,\mat{S}})\Bigg\} + \nu^{-1} \cdot
                    \left\{\frac{1}{9} \, \left(\mathrm{tr}(\widetilde{\Delta}_{\nu,\mat{S}}^3)\right)^2 - \frac{d + 1}{3} \, \mathrm{tr}(\widetilde{\Delta}_{\nu,\mat{S}}^3) \, \mathrm{tr}(\widetilde{\Delta}_{\nu,\mat{S}}) + \frac{(d + 1)^2}{4} \, \left(\mathrm{tr}(\widetilde{\Delta}_{\nu,\mat{S}})\right)^2 \right. \\
                &\quad \left.- \frac{1}{2} \, \mathrm{tr}(\widetilde{\Delta}_{\nu,\mat{S}}^4) + \frac{d + 1}{2} \, \mathrm{tr}(\widetilde{\Delta}_{\nu,\mat{S}}^2) - \left(\frac{d \, (2 d^{\hspace{0.3mm}2} + 3 d - 5)}{24} + \frac{d}{6}\right)\right\} + \OO_{d,\eta}\left(\frac{1 + \max_{1 \leq i \leq d} |\lambda_i(\widetilde{\Delta}_{\nu,\mat{S}})|^9}{\nu^{3/2}}\right).
            \end{aligned}
        \end{equation}
    \end{corollary}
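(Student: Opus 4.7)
The plan is to reduce the corollary to a direct application of Theorem~\ref{thm:p.k.expansion} via the standard change-of-variables formula for densities. The key observation is that the Jacobian factors introduced by $\bb{h}$ appear in both $f_{\bb{h}(\mat{W})}$ and $f_{\bb{h}(\mat{N})}$ and thus cancel when one forms the ratio.

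First, since $\bb{h}:\mathcal{D}\to \mathcal{R}$ is one-to-one with continuous partial derivatives and nonvanishing Jacobian, the change-of-variables theorem for densities yields
\begin{equation}
f_{\bb{h}(\mat{W})}(\bb{y}) = K_{\nu,\mat{S}}\bigl(\bb{h}^{-1}(\bb{y})\bigr) \cdot \left|\frac{\rd}{\rd\,\mathrm{vecp}(\mat{X})}\bb{h}(\mat{X})\right|^{-1}_{\mat{X}=\bb{h}^{-1}(\bb{y})},
\end{equation}
and analogously for $f_{\bb{h}(\mat{N})}(\bb{y})$ with $K_{\nu,\mat{S}}$ replaced by $g_{\nu,\mat{S}}$. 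Here we use that the SMN density \eqref{eq:sym.matrix.normal.density.vecp} is naturally expressed in the $\mathrm{vecp}$ coordinates on $\mathcal{S}^{\hspace{0.3mm}d}$, so both densities live on the same half-dimension $d(d+1)/2$ space and are transported by the same Jacobian.

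Taking the ratio, the Jacobian factors cancel identically, giving
\begin{equation}
\frac{f_{\bb{h}(\mat{W})}(\bb{y})}{f_{\bb{h}(\mat{N})}(\bb{y})} = \frac{K_{\nu,\mat{S}}\bigl(\bb{h}^{-1}(\bb{y})\bigr)}{g_{\nu,\mat{S}}\bigl(\bb{h}^{-1}(\bb{y})\bigr)}
\end{equation}
for every $\bb{y}\in \mathcal{R}$. Now I would simply substitute $\mat{X} = \bb{h}^{-1}(\bb{y})$ into Theorem~\ref{thm:p.k.expansion}; under this substitution, $\Delta_{\nu,\mat{S}}$ evaluated at $\mat{X}=\bb{h}^{-1}(\bb{y})$ becomes exactly $\widetilde{\Delta}_{\nu,\mat{S}}$ as defined in the corollary. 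The condition $\bb{h}^{-1}(\bb{y})\in B_{\nu,\mat{S}}(\eta)$ is exactly what is needed to apply the theorem's bulk condition \eqref{eq:thm:p.k.expansion.condition}, and uniformity over $\mat{X}\in B_{\nu,\mat{S}}(\eta)$ translates directly into uniformity over the corresponding set of $\bb{y}$'s because the substitution is a pointwise relabeling. This immediately yields \eqref{eq:LLT.order.2.log.function.corollary} from \eqref{eq:LLT.order.2.log} and \eqref{eq:LLT.order.2.function.corollary} from \eqref{eq:LLT.order.2}.

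There is no real obstacle: the regularity assumptions on $\bb{h}$ are used only to justify the change-of-variables step, and the rest is a mechanical substitution. The one small point worth checking carefully is that the Jacobian of $\bb{h}$ (as a map between $d(d+1)/2$-dimensional spaces via $\mathrm{vecp}$) is the same in both density transformations, which is immediate because $\mat{W}$ and $\mat{N}$ are both supported (up to a null set, in the case of $\mat{W}$) on $\mathcal{S}^{\hspace{0.3mm}d}$ with the common coordinate chart $\mathrm{vecp}$, so no additional multiplicative factor is introduced on either side.
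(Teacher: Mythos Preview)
Your argument is correct and is exactly the approach the paper intends: the paper does not give a separate proof of the corollary but simply presents it as ``a direct consequence of Theorem~\ref{thm:p.k.expansion},'' which amounts to the Jacobian-cancellation-plus-substitution you spell out. There is nothing to add.
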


    Under the conditions of Corollary~\ref{cor:LLT}, we know from the multivariate delta method (see, e.g., \cite[Theorem~2.5.2]{MR2640807}) that the random vectors $\nu^{-1/2} \,(\bb{h}(\mat{W}) - \bb{h}(\nu \hspace{0.3mm} \mat{S}))$ and $\nu^{-1/2} \, (\bb{h}(\mat{N}) - \bb{h}(\nu \hspace{0.3mm} \mat{S}))$ both converge in distribution, as $\nu\to \infty$, to
    \begin{equation*}
        \bb{Y}\sim \mathcal{N}_{d(d + 1) / 2}\left(\bb{0}, \left(\left.\frac{\rd}{\rd \, \mathrm{vecp}(\mat{X})} \bb{h}(\mat{X})\right|_{\mat{X} = \nu \hspace{0.3mm} \mat{S}}\right) 2 \hspace{0.3mm} B_d^{\top} (\mat{S} \otimes \mat{S}) \hspace{0.3mm} B_d \left(\left.\frac{\rd}{\rd \, \mathrm{vecp}(\mat{X})} \bb{h}(\mat{X})\right|_{\mat{X} = \nu \hspace{0.3mm} \mat{S}}\right)^{\top}\right),
    \end{equation*}
    where $\bb{h}(\mat{X}) = (h_1(\mat{X}), \dots, h_{d(d + 1)/2}(\mat{X}))^{\top}$ and
    \begin{equation*}
        \left.\frac{\rd}{\rd \, \mathrm{vecp}(\mat{X})} \bb{h}(\mat{X})\right|_{\mat{X} = \nu \hspace{0.3mm} \mat{S}} = \left[\left.\frac{\rd}{\rd \, \mathrm{vecp}(\mat{X})} h_1(\mat{X})\right|_{\mat{X} = \nu \hspace{0.3mm} \mat{S}} ~~ \left.\frac{\rd}{\rd \, \mathrm{vecp}(\mat{X})} h_2(\mat{X})\right|_{\mat{X} = \nu \hspace{0.3mm} \mat{S}} ~~\dots~~ \left.\frac{\rd}{\rd \, \mathrm{vecp}(\mat{X})} h_{d(d + 1)/2}(\mat{X})\right|_{\mat{X} = \nu \hspace{0.3mm} \mat{S}}\right]^{\top}.
    \end{equation*}
    Therefore, it would have been neat to extend Corollary~\ref{cor:LLT} by expanding the log-ratio $\log (f_{\nu^{-1/2} \, (\bb{h}(\mat{W}) - \bb{h}(\nu \hspace{0.3mm} \mat{S}))}(\bb{y}) / f_{\bb{Y}}(\bb{y}))$.
    However, this would most likely require an expansion for the log-ratio $\log (f_{\nu^{-1/2} \, (\bb{h}(\mat{N}) - \bb{h}(\nu \hspace{0.3mm} \mat{S}))}(\bb{y}) / f_{\bb{Y}}(\bb{y}))$, and it is unclear which restrictions we should impose on $\bb{h}$ to progress in that direction.
    This question is left open for future research.

    Below, we provide numerical evidence (displayed graphically) for the validity of the expansion in Theorem~\ref{thm:p.k.expansion} when $d = 2$.
    We compare three levels of approximation for various choices of $\mat{S}$.
    For any given $\mat{S}\in \mathcal{S}_{++}^{\hspace{0.3mm}d}$, define
    \begin{align}
        E_0
        &\leqdef \sup_{\mat{X}\in B_{\nu,\mat{S}}(2^{-1/2} \nu^{-1/6})} \left|\log \left(\frac{K_{\nu,\mat{S}}(\mat{X})}{g_{\nu,\mat{S}}(\mat{X})}\right)\right|, \label{eq:E.0} \\[0.5mm]
        E_1
        &\leqdef \sup_{\mat{X}\in B_{\nu,\mat{S}}(2^{-1/2} \nu^{-1/6})} \left|\log \left(\frac{K_{\nu,\mat{S}}(\mat{X})}{g_{\nu,\mat{S}}(\mat{X})}\right) - \nu^{-1/2} \cdot \Bigg\{\frac{\sqrt{2}}{3} \, \mathrm{tr}(\Delta_{\nu,\mat{S}}^3) - \frac{d + 1}{\sqrt{2}} \, \mathrm{tr}(\Delta_{\nu,\mat{S}})\Bigg\}\right|, \label{eq:E.1} \\[1mm]
        E_2
        &\leqdef \sup_{\mat{X}\in B_{\nu,\mat{S}}(2^{-1/2} \nu^{-1/6})} \left|\log \left(\frac{K_{\nu,\mat{S}}(\mat{X})}{g_{\nu,\mat{S}}(\mat{X})}\right) - \nu^{-1/2} \cdot \Bigg\{\frac{\sqrt{2}}{3} \, \mathrm{tr}(\Delta_{\nu,\mat{S}}^3) - \frac{d + 1}{\sqrt{2}} \, \mathrm{tr}(\Delta_{\nu,\mat{S}})\Bigg\}\right. \notag \\
        &\quad\left.\hspace{25mm}- \nu^{-1} \cdot \left\{- \frac{1}{2} \, \mathrm{tr}(\Delta_{\nu,\mat{S}}^4) + \frac{d + 1}{2} \, \mathrm{tr}(\Delta_{\nu,\mat{S}}^2) - \left(\frac{d \, (2 d^{\hspace{0.3mm}2} + 3 d - 5)}{24} + \frac{d}{6}\right)\right\}\right|. \label{eq:E.2}
    \end{align}
    In order to avoid numerical errors in part due to the gamma functions in $K_{\nu,\mat{S}}(\mat{X})$, we have to work a bit to get an expression for $\log \left(K_{\nu,\mat{S}}(\mat{X}) / g_{\nu,\mat{S}}(\mat{X})\right)$ which is numerically more stable.
    By taking the expression for $K_{\nu,\mat{S}}(\mat{X})$ in \eqref{eq:Wishart.density.rewrite} and dividing by the expression for $g_{\nu,\mat{S}}(\mat{X})$ on the right-hand side of \eqref{eq:sym.matrix.normal.density}, we get
    \begin{equation}
        \frac{K_{\nu,\mat{S}}(\mat{X})}{g_{\nu,\mat{S}}(\mat{X})} = |\mathrm{I}_d + \sqrt{2 / \nu} \, \Delta_{\nu,\mat{S}}|^{\nu/2 - (d + 1)/2} \cdot \frac{\exp\left(-\sqrt{\frac{\nu}{2}} \mathrm{tr}(\Delta_{\nu,\mat{S}}) + \frac{1}{2} \mathrm{tr}(\Delta_{\nu,\mat{S}}^2)\right)}{\prod_{i=1}^d \frac{(\nu - (i + 1))^{(\nu - i)/2}}{e^{-(i + 1)/2} \, \nu^{(\nu - i)/2}} \cdot \prod_{i=1}^d \frac{\Gamma(\frac{1}{2} (\nu - (i + 1)) + 1)}{\sqrt{2\pi} e^{-\frac{1}{2} (\nu - (i + 1))} [\frac{1}{2} (\nu - (i + 1))]^{(\nu - i)/2}}},
    \end{equation}
    so that
    \begin{equation}
        \begin{aligned}
            \log\left(\frac{K_{\nu,\mat{S}}(\mat{X})}{g_{\nu,\mat{S}}(\mat{X})}\right)
            &= \frac{\nu - (d + 1)}{2} \sum_{i=1}^d \log\left(1 + \sqrt{\frac{2}{\nu}} \lambda_i(\Delta_{\nu,\mat{S}})\right) - \sqrt{\frac{\nu}{2}} \mathrm{tr}(\Delta_{\nu,\mat{S}}) + \frac{1}{2} \mathrm{tr}(\Delta_{\nu,\mat{S}}^2) - \sum_{i=1}^d \left[\left(\frac{\nu - i}{2}\right) \log \left(1 - \frac{i + 1}{\nu}\right) + \frac{i + 1}{2}\right] \\
            &\quad- \sum_{i=1}^d \left[\log \Gamma\left(\frac{1}{2} (\nu - (i + 1)) + 1\right) - \frac{1}{2} \log(2\pi) + \frac{1}{2} (\nu - (i + 1)) - \frac{\nu - i}{2} \log\left(\frac{1}{2} (\nu - (i + 1))\right)\right].
        \end{aligned}
    \end{equation}
    In \texttt{R}, we used this last equation to evaluate the log-ratios inside $E_0$, $E_1$ and $E_2$.

    \newpage
    Note that $\mat{X}\in B_{\nu,\mat{S}}(2^{-1/2} \nu^{-1/6})$ implies $|\mathrm{tr}(\Delta_{\nu,\mat{S}}^k)| \leq d \, 2^{-k}$ for all $k\in \N$, so we expect from Theorem~\ref{thm:p.k.expansion} that the maximum errors above ($E_0$, $E_1$ and $E_2$) will have the asymptotic behavior
    \begin{align}
        E_i = \OO_d(\nu^{-(1 + i)/2}), \quad \text{for all } i\in \{0,1,2\},
    \end{align}
    or equivalently,
    \begin{align}\label{eq:liminf.exponent.bound}
        \liminf_{\nu\to \infty} \frac{\log E_i}{\log (\nu^{-1})} \geq \frac{1 + i}{2}, \quad \text{for all } i\in \{0,1,2\}.
    \end{align}
    The property \eqref{eq:liminf.exponent.bound} is verified in Fig.~\ref{fig:error.exponents.plots} below, for various choices of $\mat{S}$.
    Similarly, the corresponding the log-log plots of the errors as a function of $\nu$ are displayed in Fig.~\ref{fig:loglog.errors.plots}.
    The simulations are limited to the range $5 \leq \nu \leq 205$.
    The \texttt{R} code that generated Fig.~\ref{fig:loglog.errors.plots} and Fig.~\ref{fig:error.exponents.plots} can be found in \ref{sec:R.code}.

    \vspace{6mm}
    \begin{figure}[ht]
        \captionsetup[subfigure]{labelformat=empty}
        \vspace{-0.5cm}
        \centering
        \begin{subfigure}[b]{0.22\textwidth}
            \centering
            \includegraphics[width=\textwidth, height=0.85\textwidth]{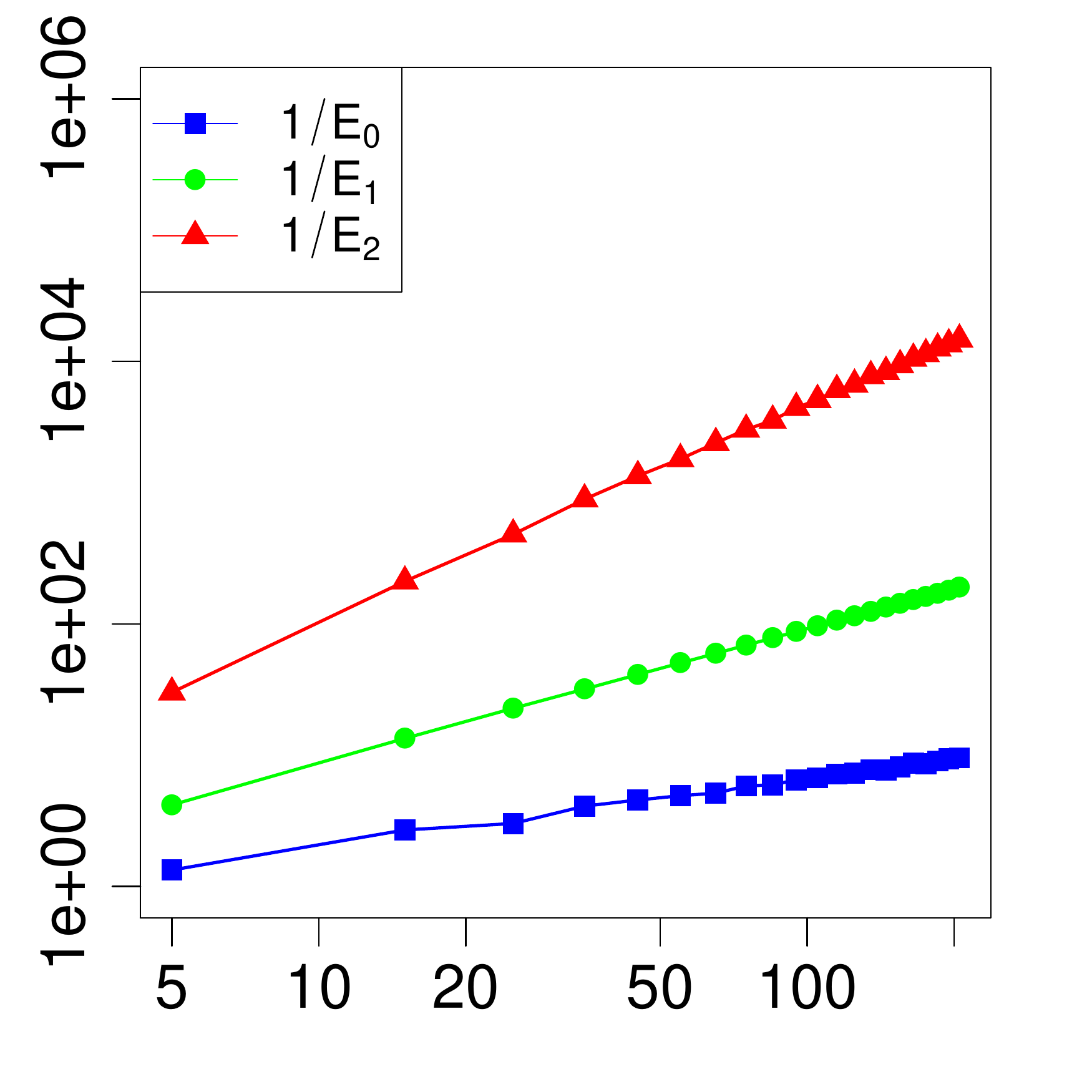}
            \vspace{-0.8cm}
            \caption{$\mat{S} = \begin{pmatrix} 2 & 1 \\ 1 & 2\end{pmatrix}$}
        \end{subfigure}
        \quad
        \begin{subfigure}[b]{0.22\textwidth}
            \centering
            \includegraphics[width=\textwidth, height=0.85\textwidth]{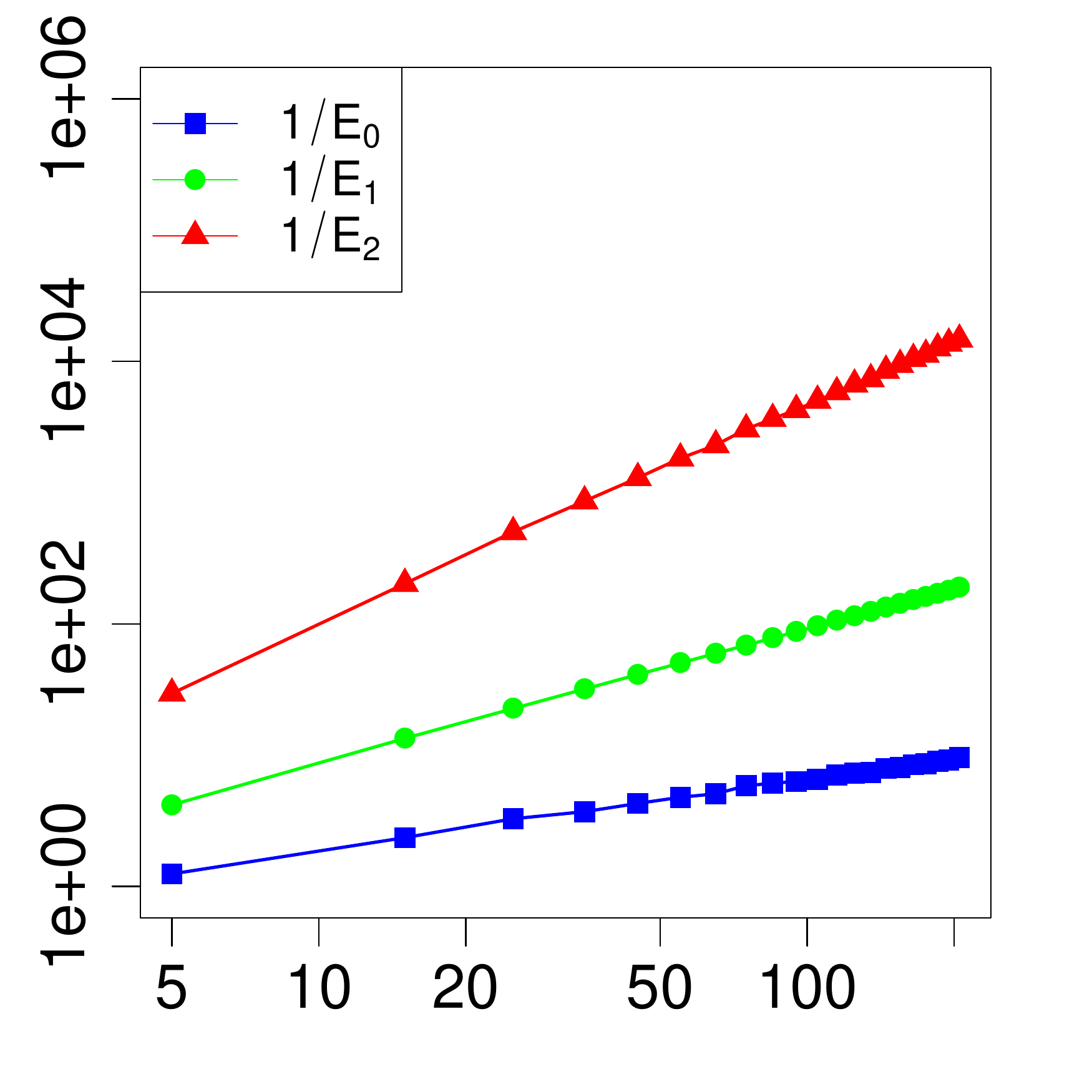}
            \vspace{-0.8cm}
            \caption{$\mat{S} = \begin{pmatrix} 2 & 1 \\ 1 & 3\end{pmatrix}$}
        \end{subfigure}
        \quad
        \begin{subfigure}[b]{0.22\textwidth}
            \centering
            \includegraphics[width=\textwidth, height=0.85\textwidth]{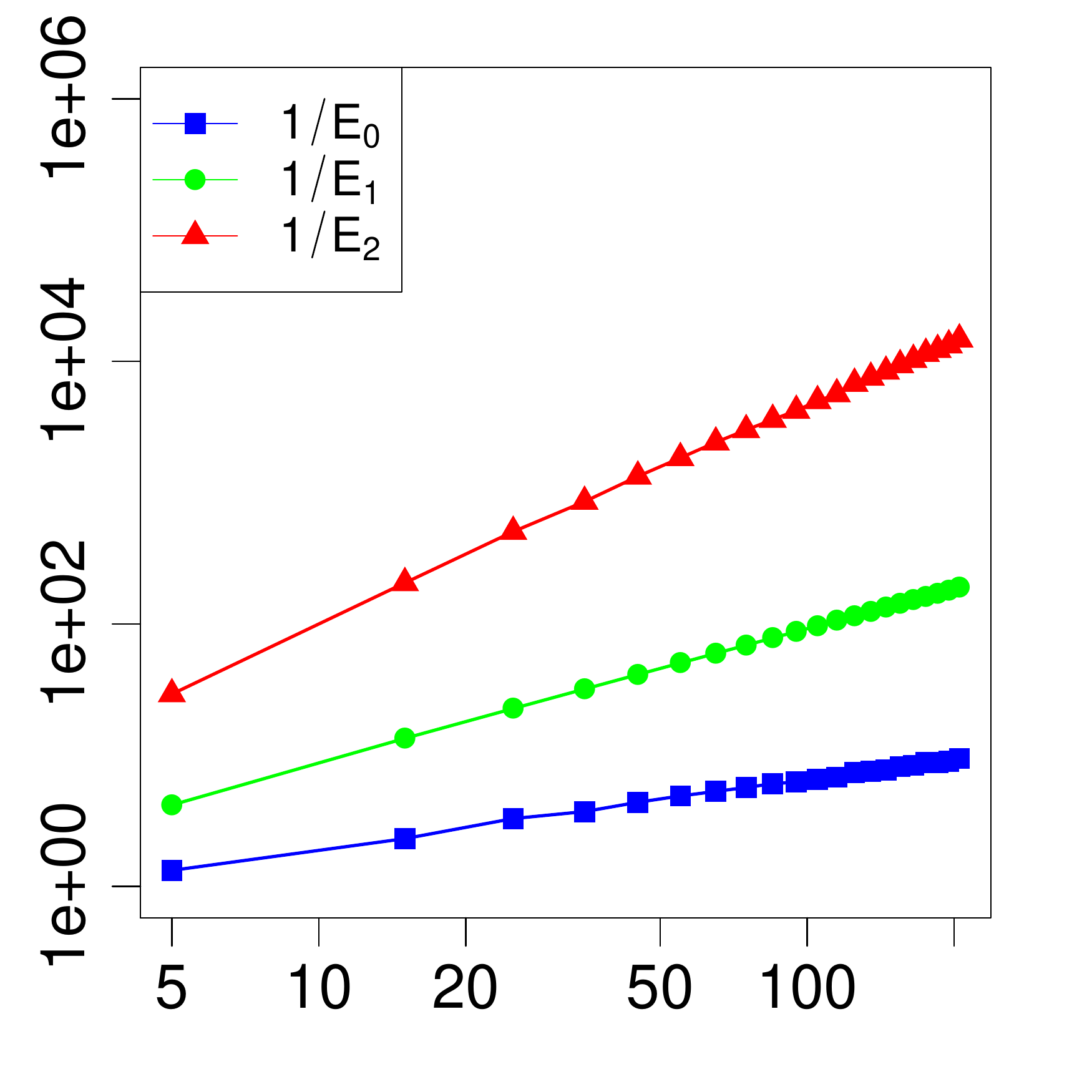}
            \vspace{-0.8cm}
            \caption{$\mat{S} = \begin{pmatrix} 2 & 1 \\ 1 & 4\end{pmatrix}$}
        \end{subfigure}
        \quad
        \begin{subfigure}[b]{0.22\textwidth}
            \centering
            \includegraphics[width=\textwidth, height=0.85\textwidth]{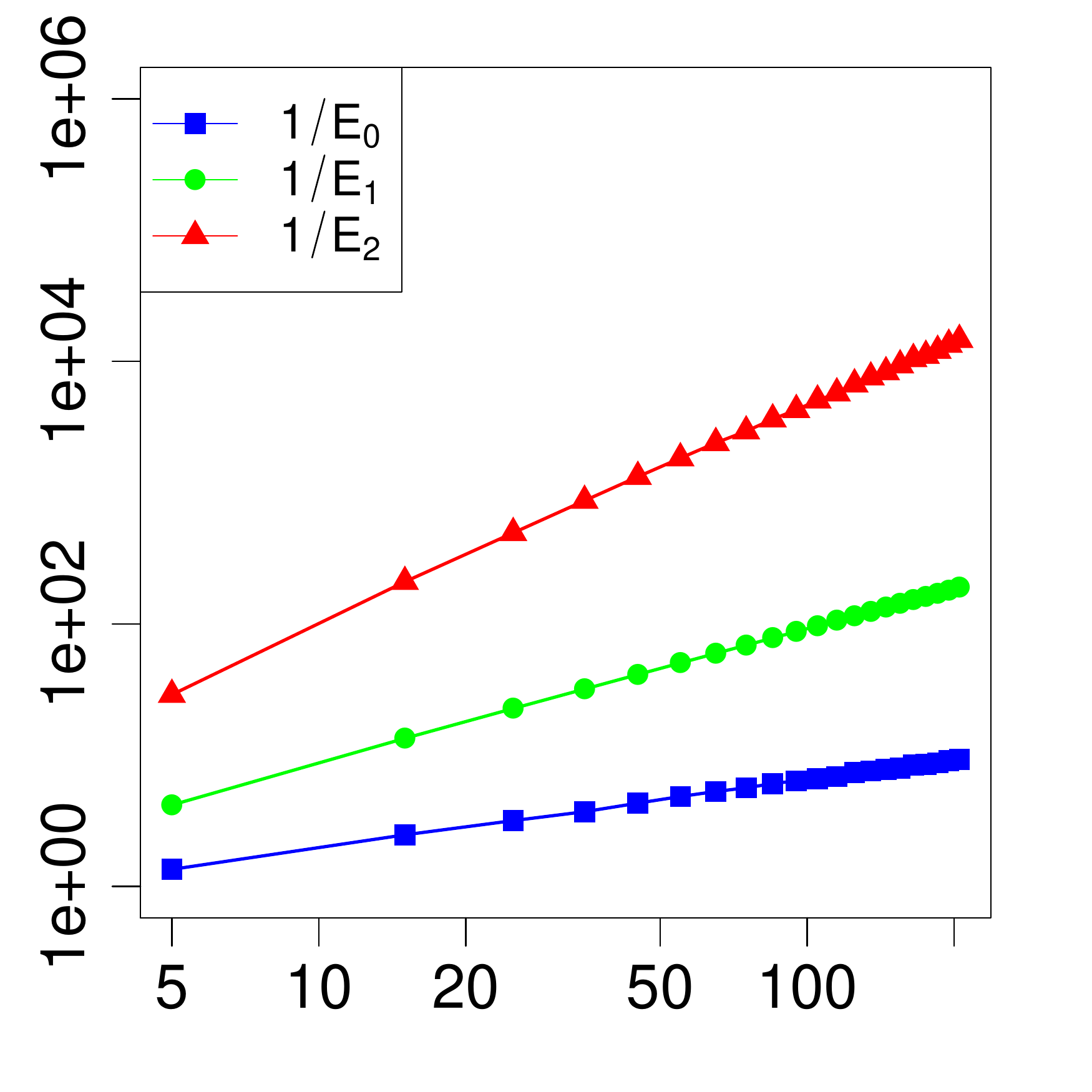}
            \vspace{-0.8cm}
            \caption{$\mat{S} = \begin{pmatrix} 2 & 1 \\ 1 & 5\end{pmatrix}$}
        \end{subfigure}
        \begin{subfigure}[b]{0.22\textwidth}
            \centering
            \includegraphics[width=\textwidth, height=0.85\textwidth]{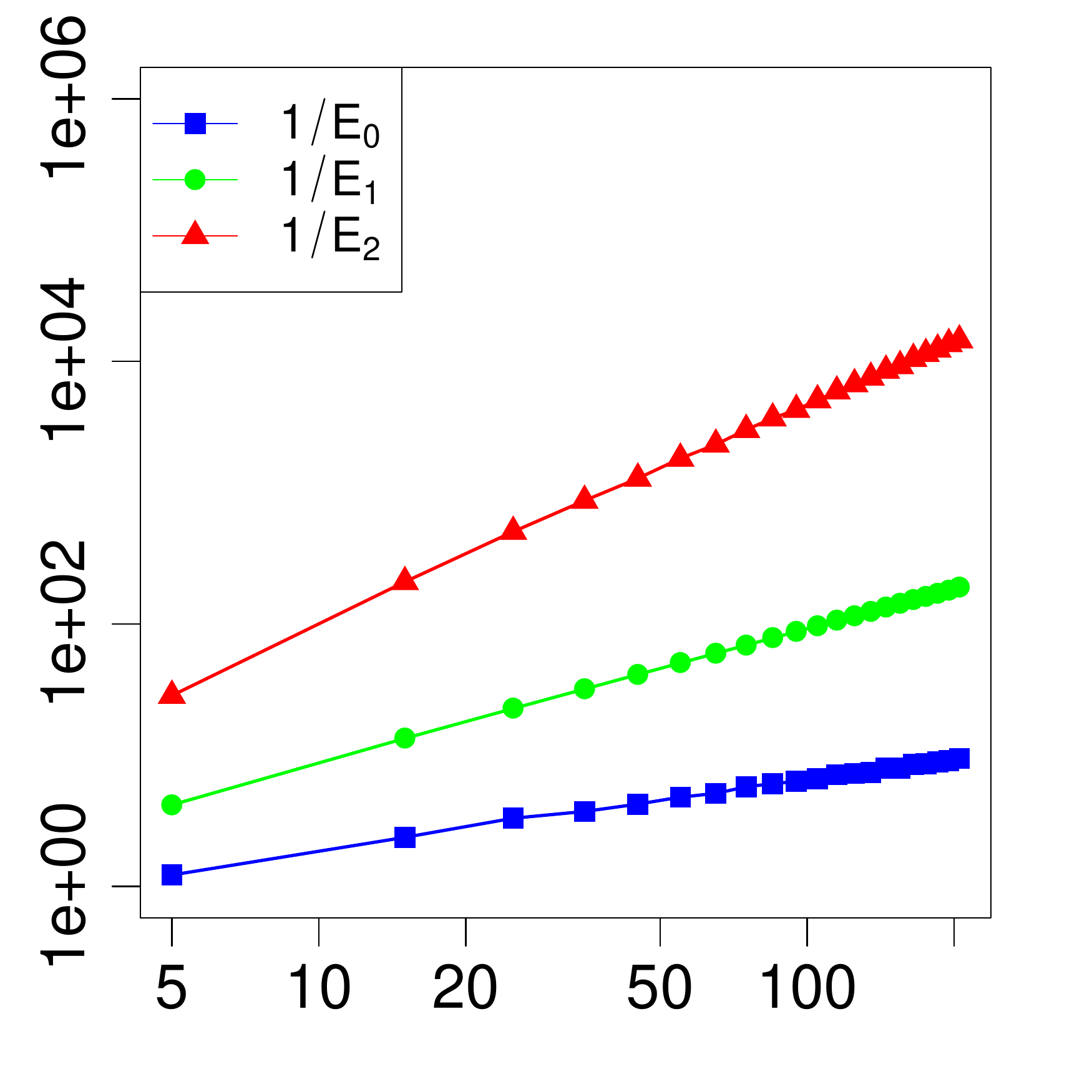}
            \vspace{-0.8cm}
            \caption{$\mat{S} = \begin{pmatrix} 3 & 1 \\ 1 & 2\end{pmatrix}$}
        \end{subfigure}
        \quad
        \begin{subfigure}[b]{0.22\textwidth}
            \centering
            \includegraphics[width=\textwidth, height=0.85\textwidth]{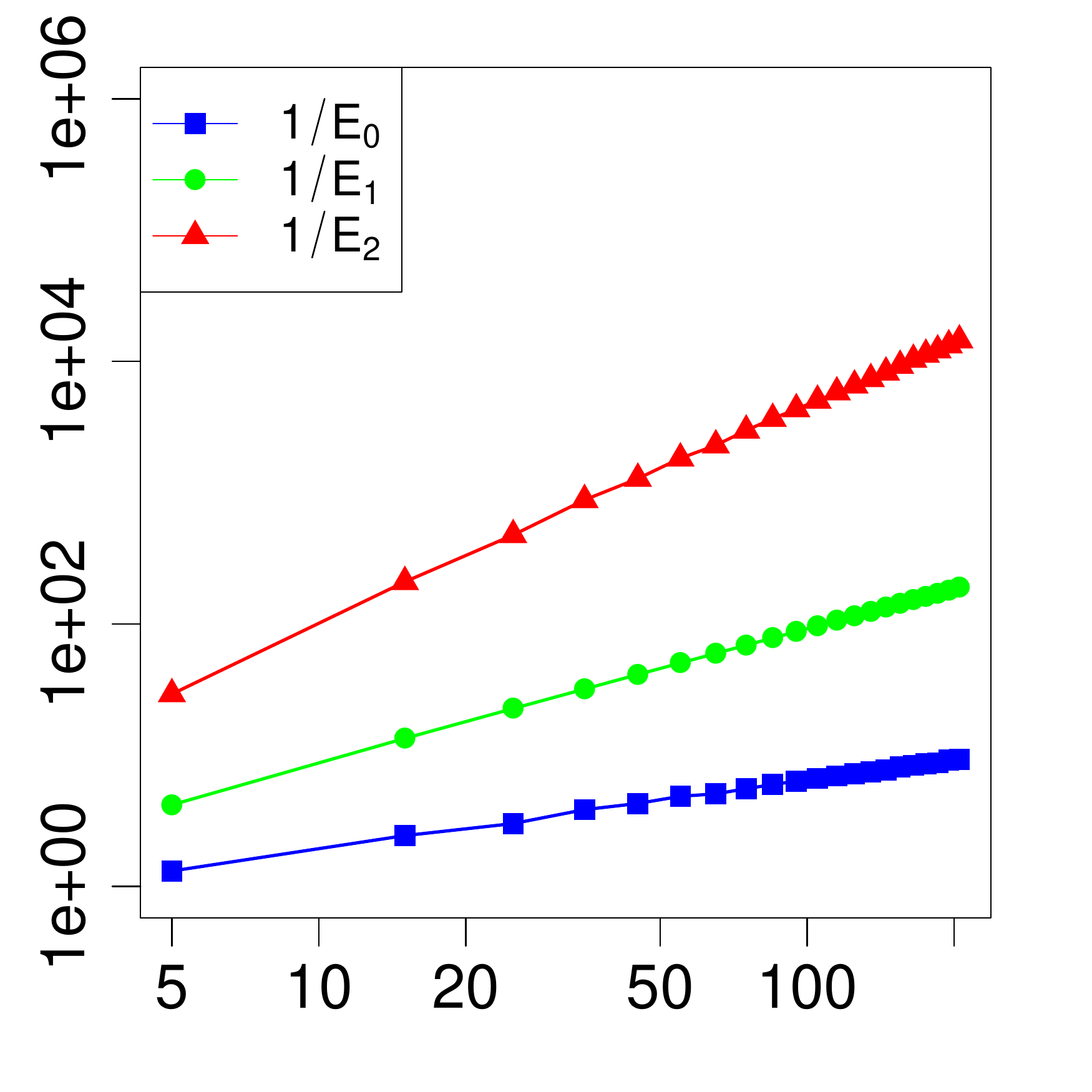}
            \vspace{-0.8cm}
            \caption{$\mat{S} = \begin{pmatrix} 3 & 1 \\ 1 & 3\end{pmatrix}$}
        \end{subfigure}
        \quad
        \begin{subfigure}[b]{0.22\textwidth}
            \centering
            \includegraphics[width=\textwidth, height=0.85\textwidth]{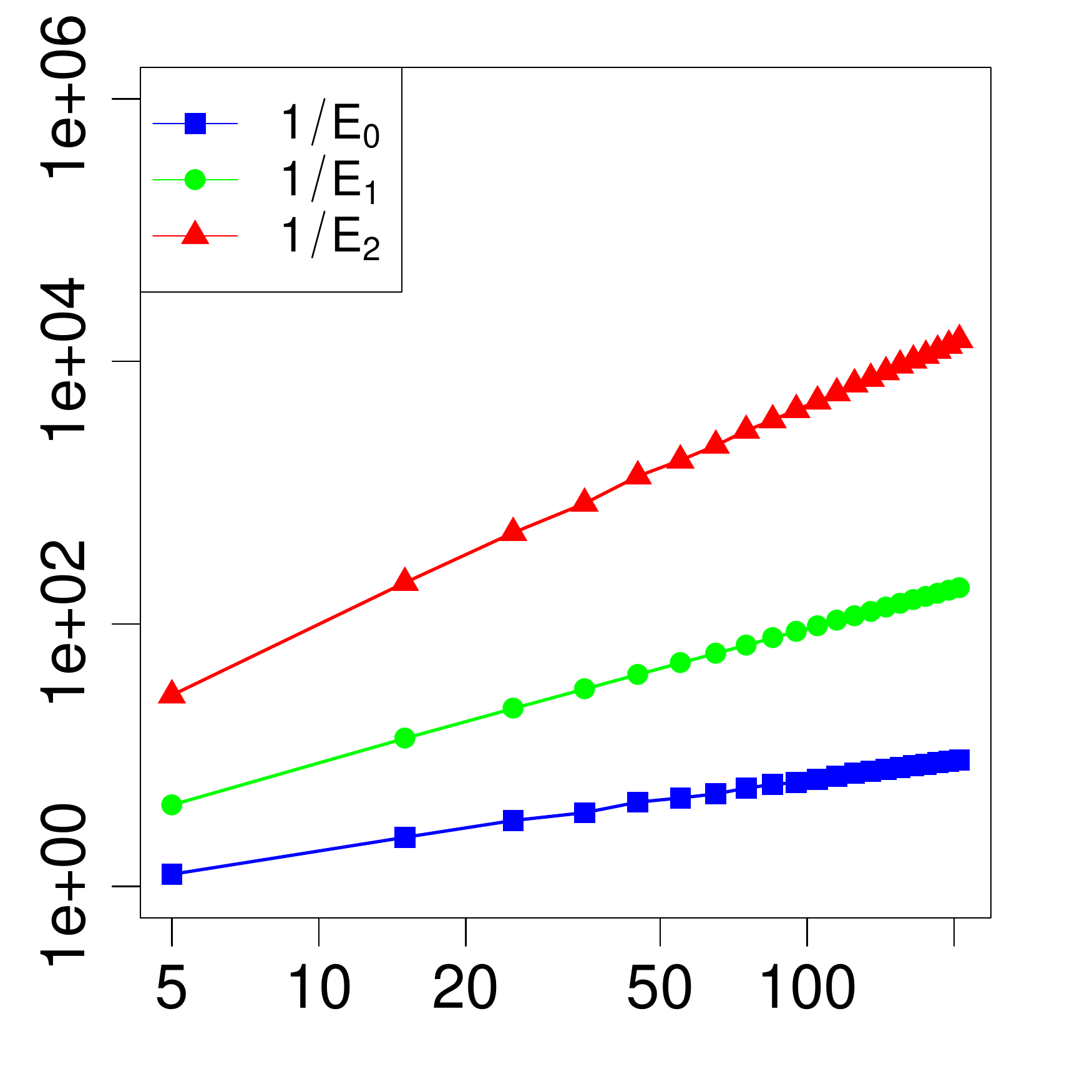}
            \vspace{-0.8cm}
            \caption{$\mat{S} = \begin{pmatrix} 3 & 1 \\ 1 & 4\end{pmatrix}$}
        \end{subfigure}
        \quad
        \begin{subfigure}[b]{0.22\textwidth}
            \centering
            \includegraphics[width=\textwidth, height=0.85\textwidth]{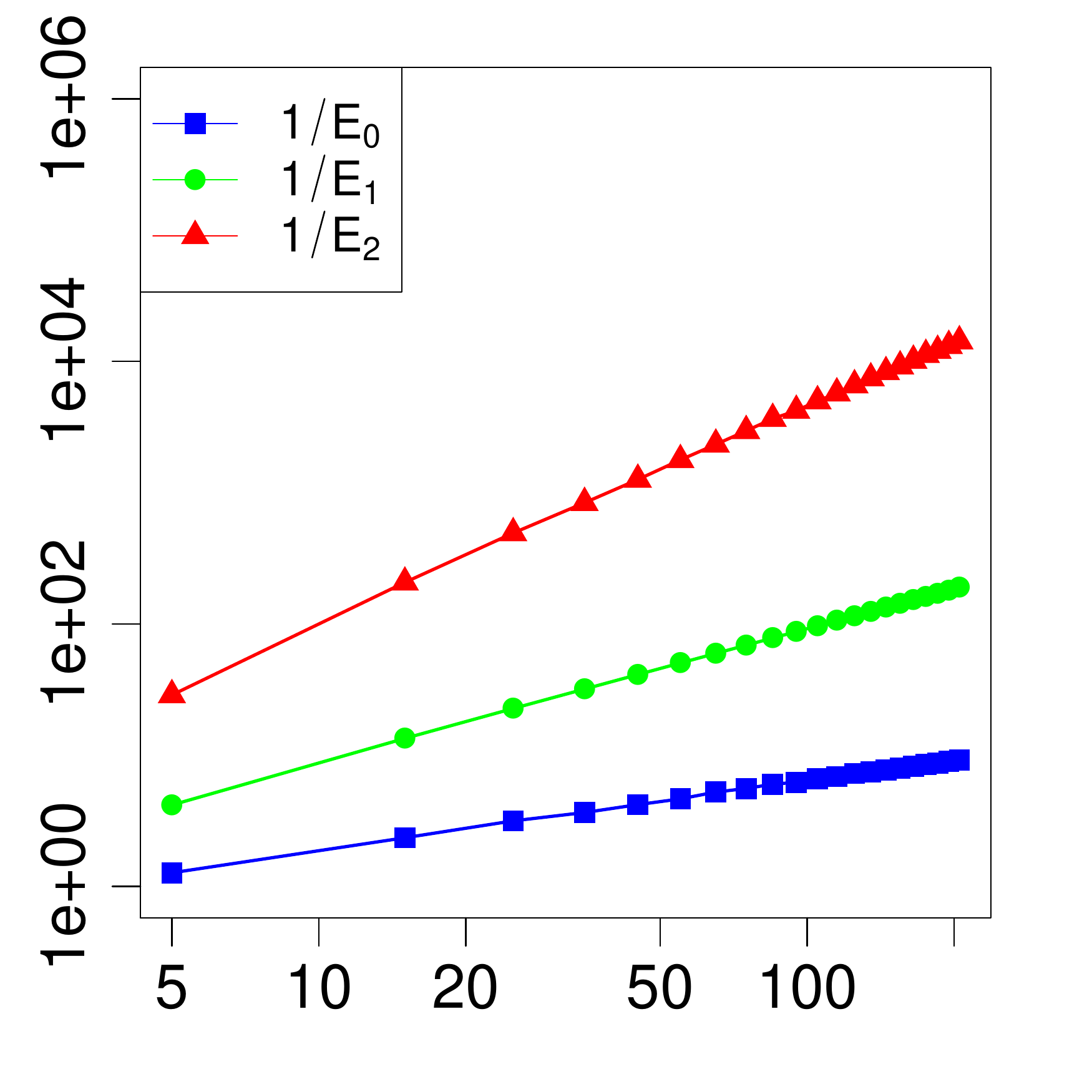}
            \vspace{-0.8cm}
            \caption{$\mat{S} = \begin{pmatrix} 3 & 1 \\ 1 & 5\end{pmatrix}$}
        \end{subfigure}
        \begin{subfigure}[b]{0.22\textwidth}
            \centering
            \includegraphics[width=\textwidth, height=0.85\textwidth]{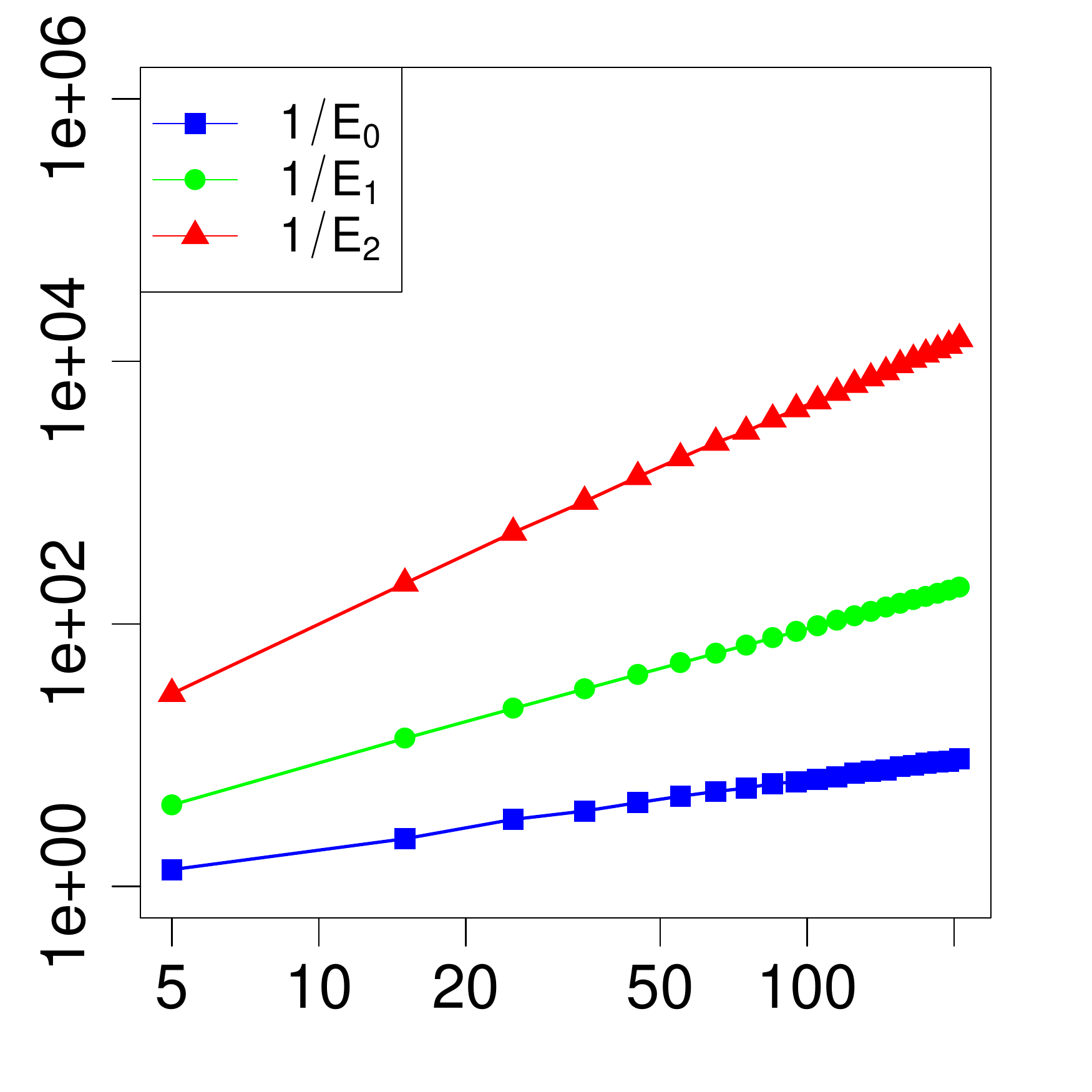}
            \vspace{-0.8cm}
            \caption{$\mat{S} = \begin{pmatrix} 4 & 1 \\ 1 & 2\end{pmatrix}$}
        \end{subfigure}
        \quad
        \begin{subfigure}[b]{0.22\textwidth}
            \centering
            \includegraphics[width=\textwidth, height=0.85\textwidth]{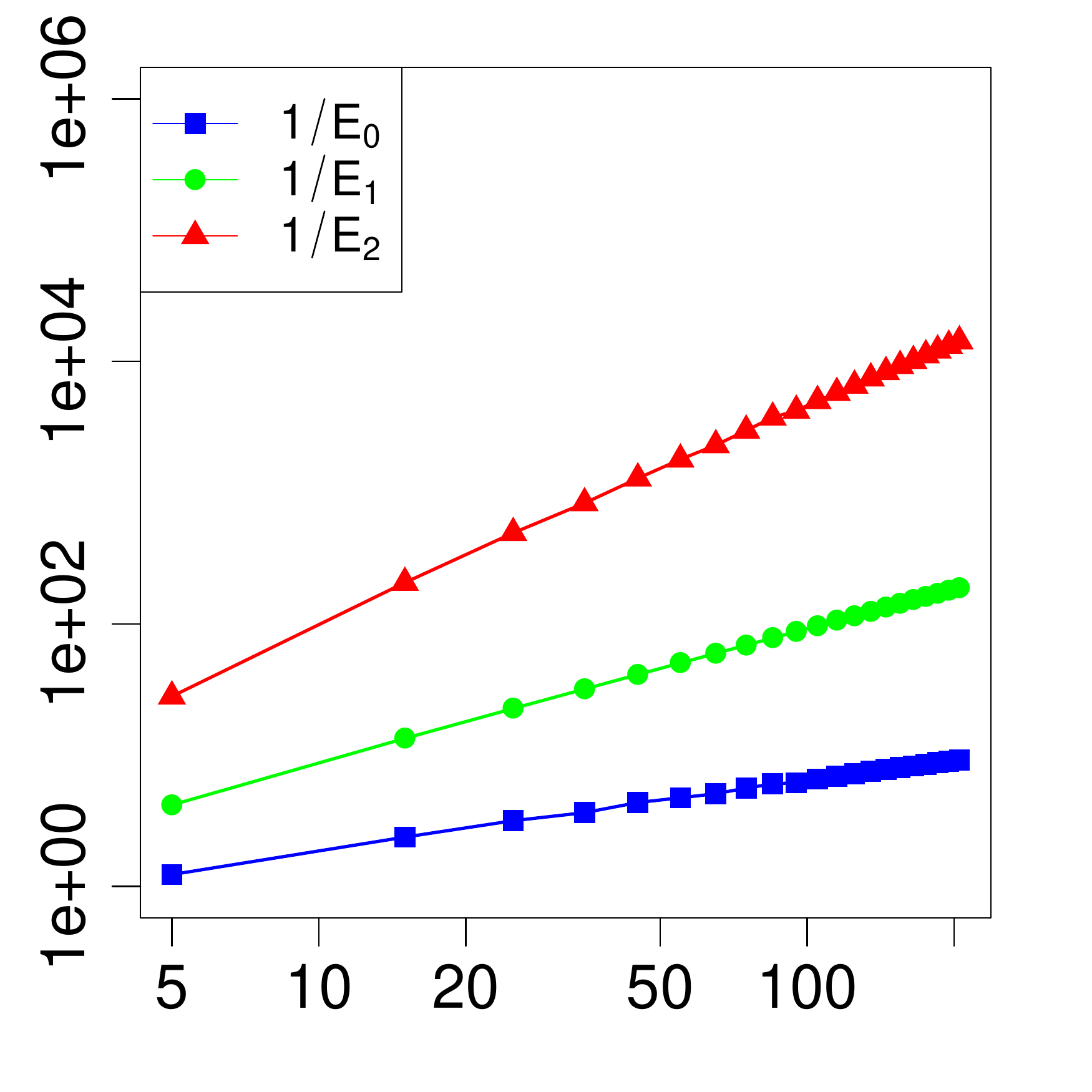}
            \vspace{-0.8cm}
            \caption{$\mat{S} = \begin{pmatrix} 4 & 1 \\ 1 & 3\end{pmatrix}$}
        \end{subfigure}
        \quad
        \begin{subfigure}[b]{0.22\textwidth}
            \centering
            \includegraphics[width=\textwidth, height=0.85\textwidth]{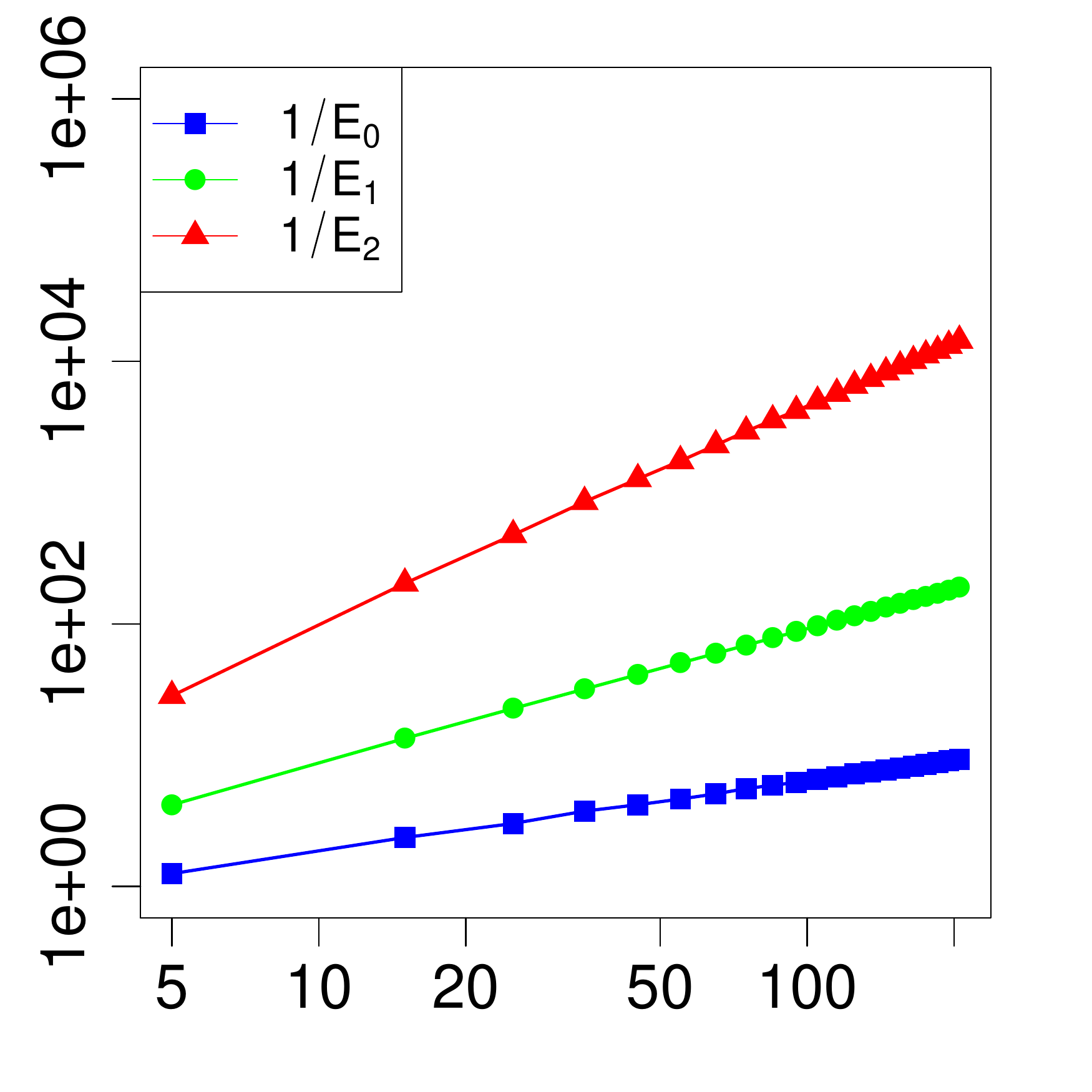}
            \vspace{-0.8cm}
            \caption{$\mat{S} = \begin{pmatrix} 4 & 1 \\ 1 & 4\end{pmatrix}$}
        \end{subfigure}
        \quad
        \begin{subfigure}[b]{0.22\textwidth}
            \centering
            \includegraphics[width=\textwidth, height=0.85\textwidth]{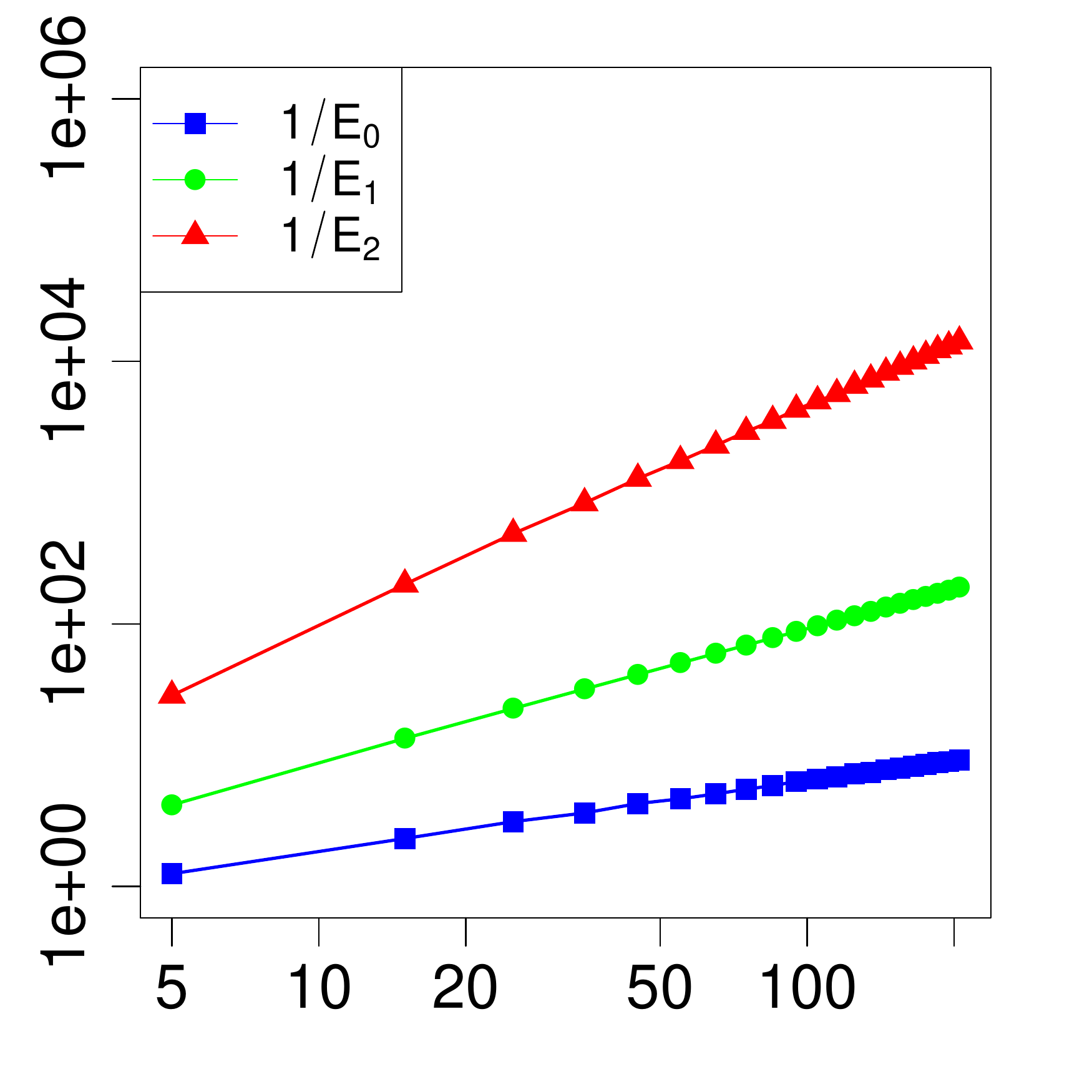}
            \vspace{-0.8cm}
            \caption{$\mat{S} = \begin{pmatrix} 4 & 1 \\ 1 & 5\end{pmatrix}$}
        \end{subfigure}
        \begin{subfigure}[b]{0.22\textwidth}
            \centering
            \includegraphics[width=\textwidth, height=0.85\textwidth]{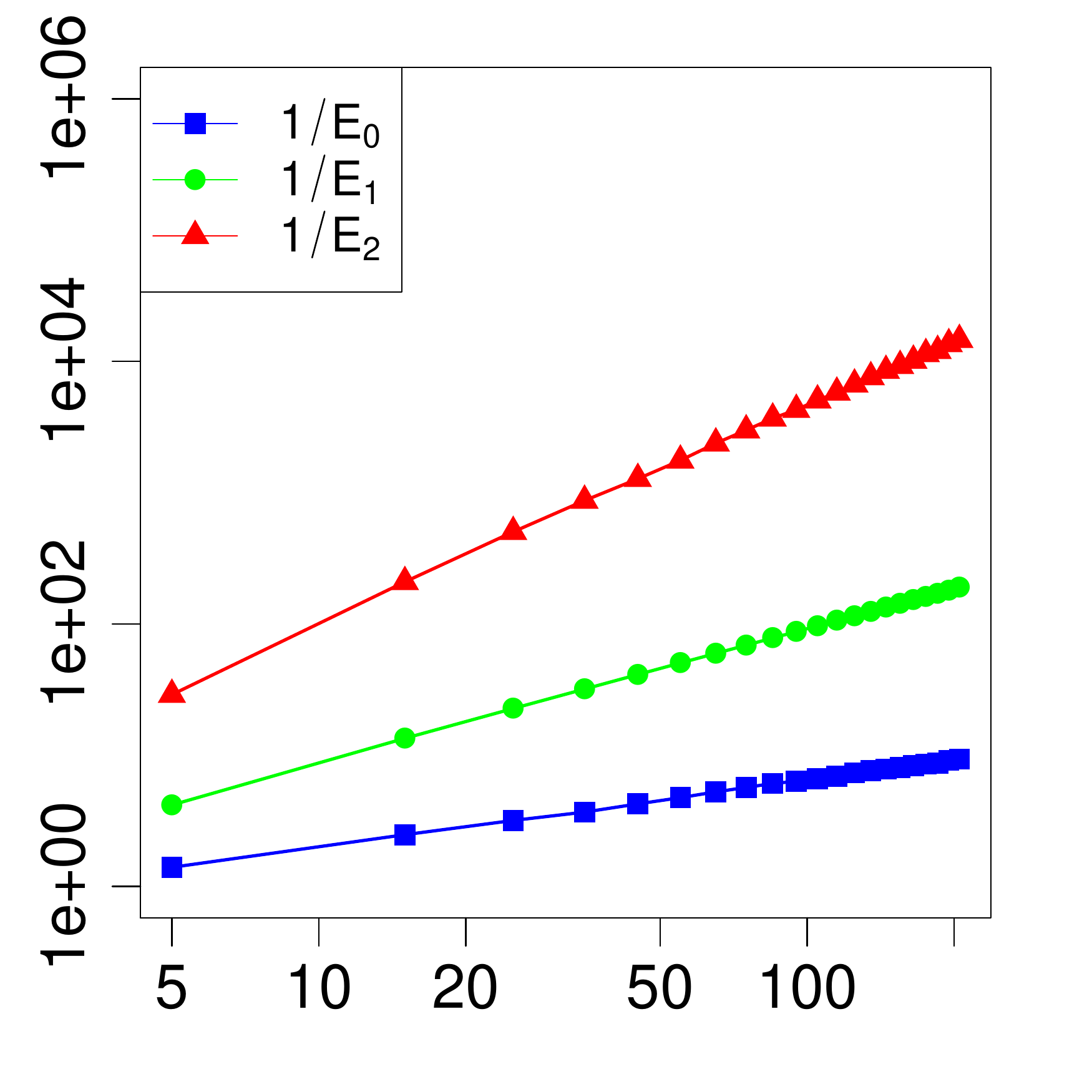}
            \vspace{-0.8cm}
            \caption{$\mat{S} = \begin{pmatrix} 5 & 1 \\ 1 & 2\end{pmatrix}$}
        \end{subfigure}
        \quad
        \begin{subfigure}[b]{0.22\textwidth}
            \centering
            \includegraphics[width=\textwidth, height=0.85\textwidth]{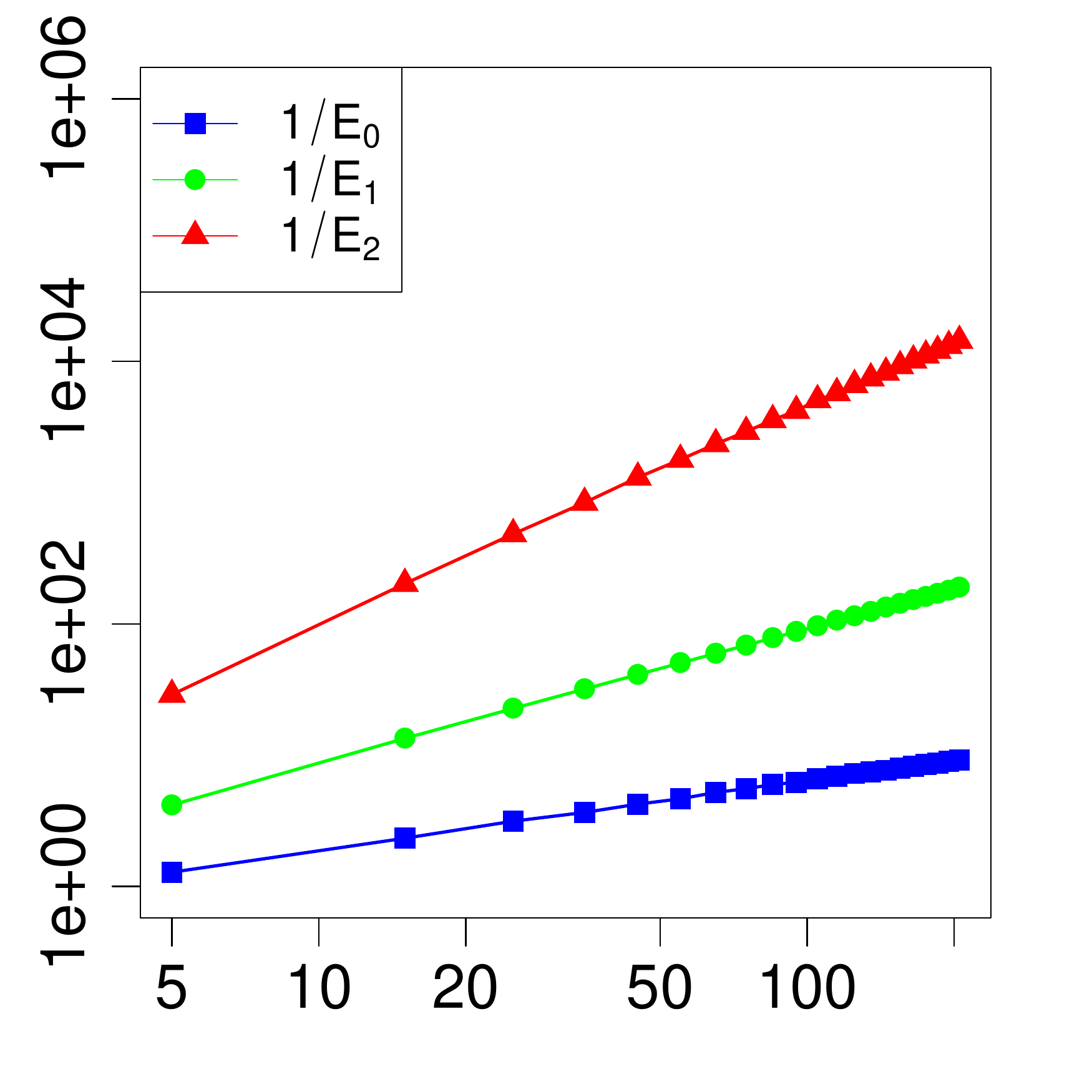}
            \vspace{-0.8cm}
            \caption{$\mat{S} = \begin{pmatrix} 5 & 1 \\ 1 & 3\end{pmatrix}$}
        \end{subfigure}
        \quad
        \begin{subfigure}[b]{0.22\textwidth}
            \centering
            \includegraphics[width=\textwidth, height=0.85\textwidth]{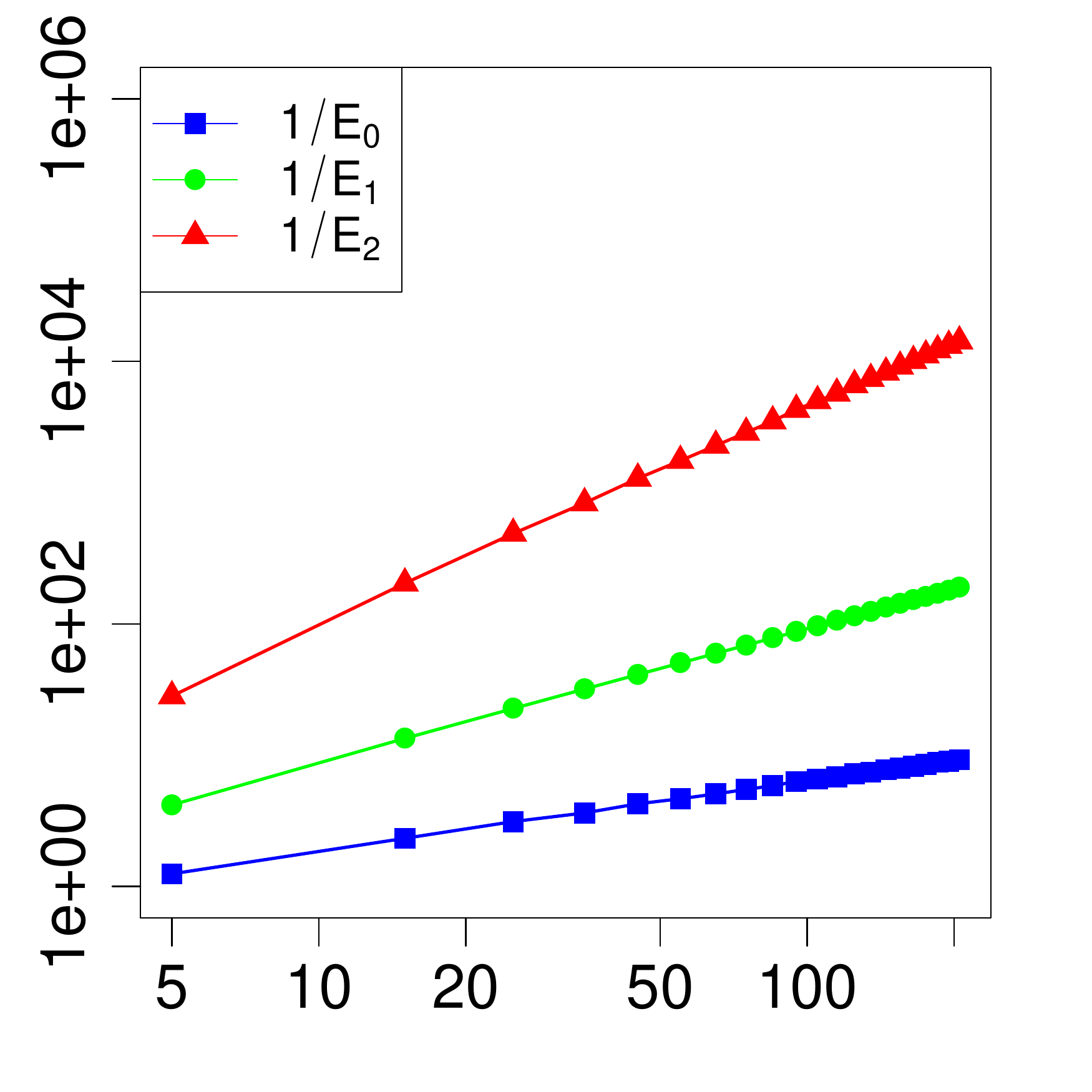}
            \vspace{-0.8cm}
            \caption{$\mat{S} = \begin{pmatrix} 5 & 1 \\ 1 & 4\end{pmatrix}$}
        \end{subfigure}
        \quad
        \begin{subfigure}[b]{0.22\textwidth}
            \centering
            \includegraphics[width=\textwidth, height=0.85\textwidth]{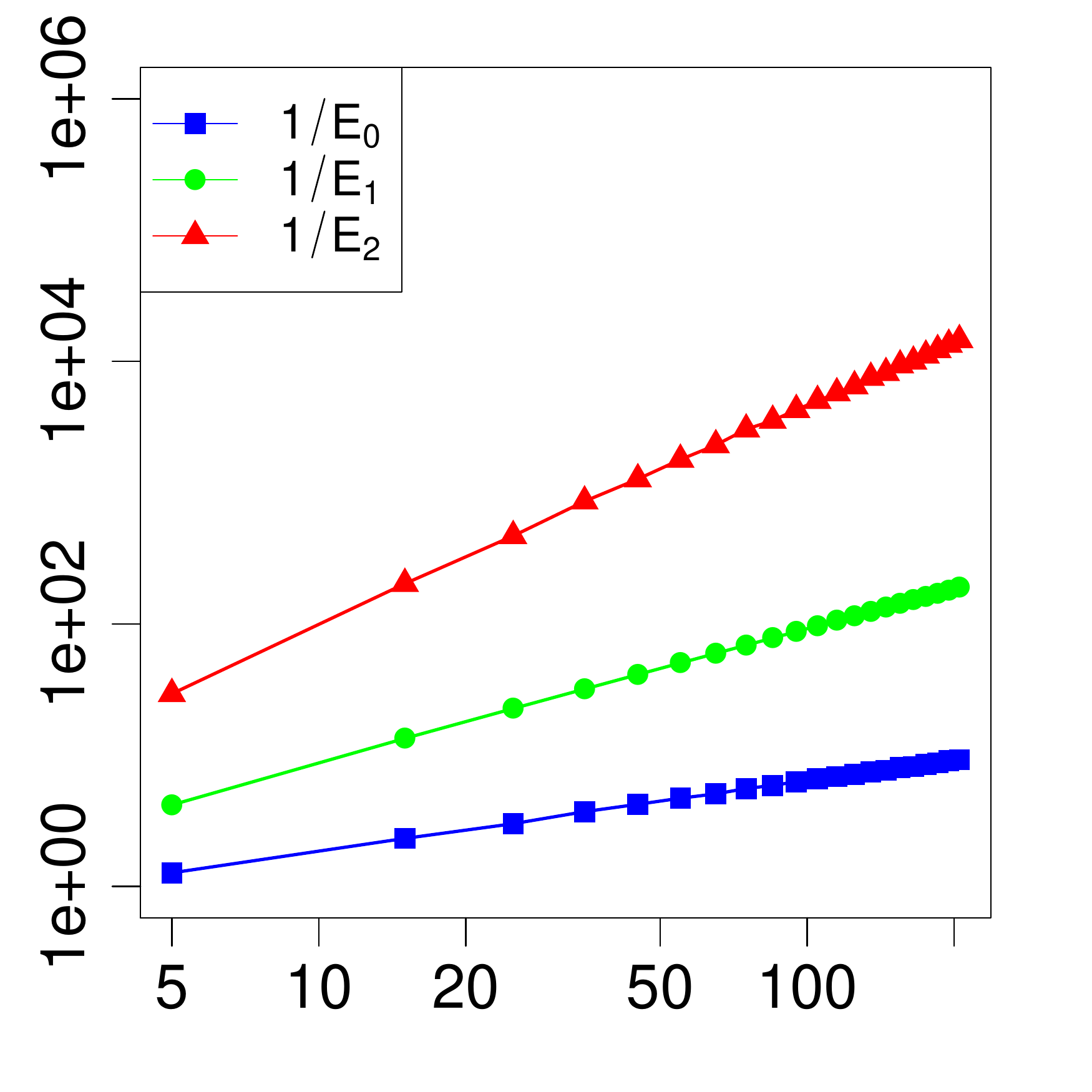}
            \vspace{-0.8cm}
            \caption{$\mat{S} = \begin{pmatrix} 5 & 1 \\ 1 & 5\end{pmatrix}$}
        \end{subfigure}
        \caption{Plots of $1 / E_i$ as a function of $\nu$, for various choices of $\mat{S}$. Both the horizontal and vertical axes are on a logarithmic scale. The plots clearly illustrate how the addition of correction terms from Theorem~\ref{thm:p.k.expansion} to the base approximation \eqref{eq:E.0} improves it.}
        \label{fig:loglog.errors.plots}
    \end{figure}
    \begin{figure}[ht]
        \captionsetup[subfigure]{labelformat=empty}
        \vspace{-0.5cm}
        \centering
        \begin{subfigure}[b]{0.22\textwidth}
            \centering
            \includegraphics[width=\textwidth, height=0.85\textwidth]{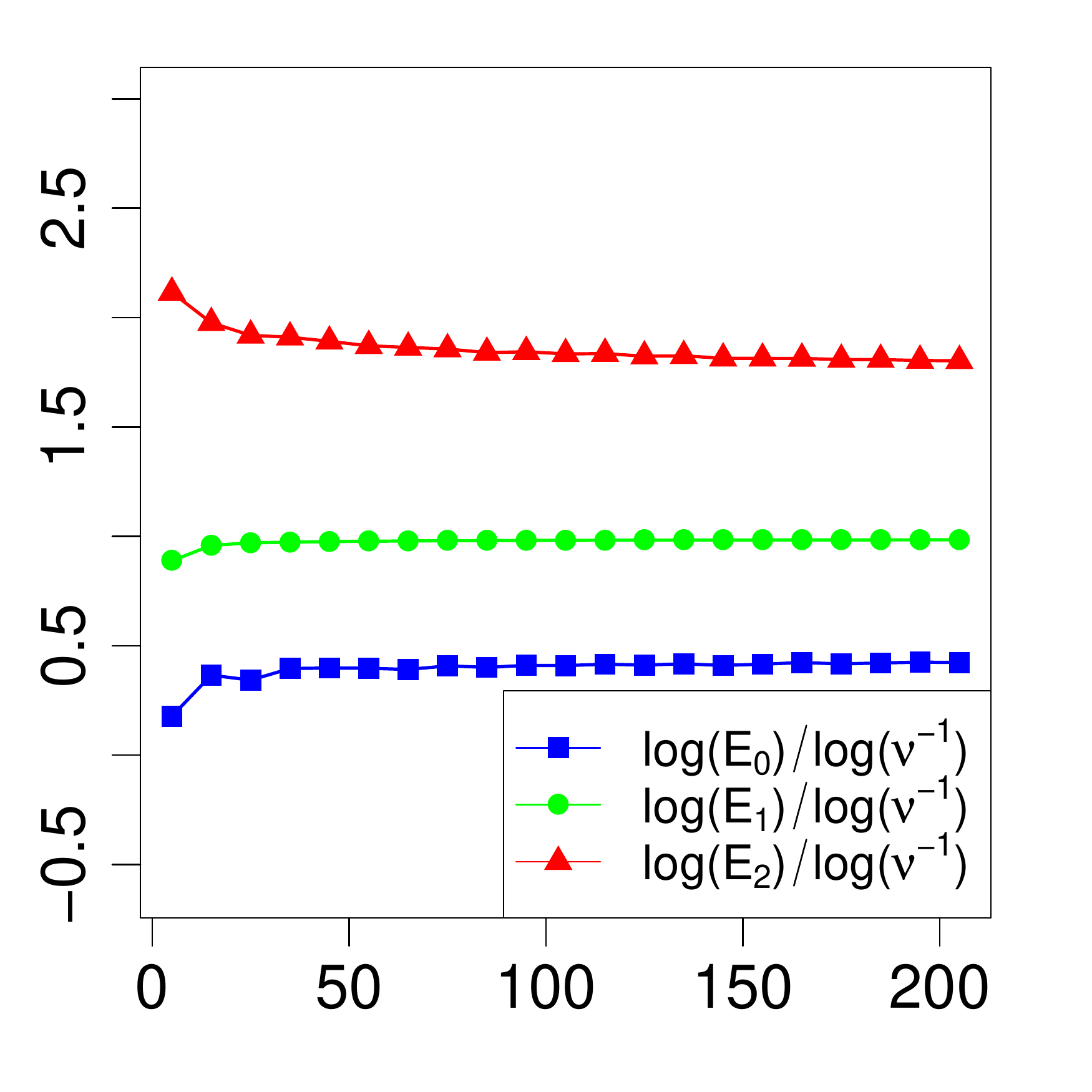}
            \vspace{-0.8cm}
            \caption{$\mat{S} = \begin{pmatrix} 2 & 1 \\ 1 & 2\end{pmatrix}$}
        \end{subfigure}
        \quad
        \begin{subfigure}[b]{0.22\textwidth}
            \centering
            \includegraphics[width=\textwidth, height=0.85\textwidth]{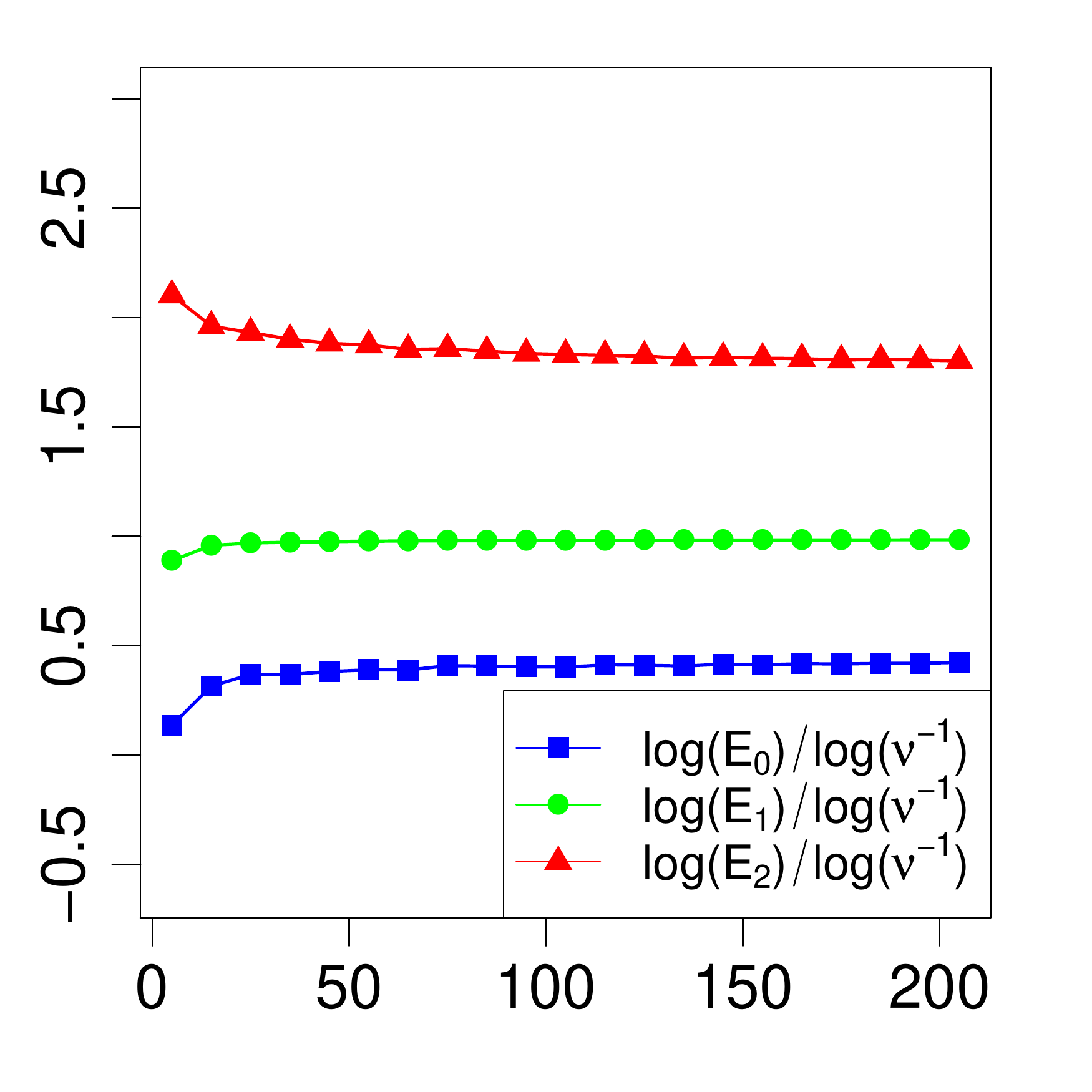}
            \vspace{-0.8cm}
            \caption{$\mat{S} = \begin{pmatrix} 2 & 1 \\ 1 & 3\end{pmatrix}$}
        \end{subfigure}
        \quad
        \begin{subfigure}[b]{0.22\textwidth}
            \centering
            \includegraphics[width=\textwidth, height=0.85\textwidth]{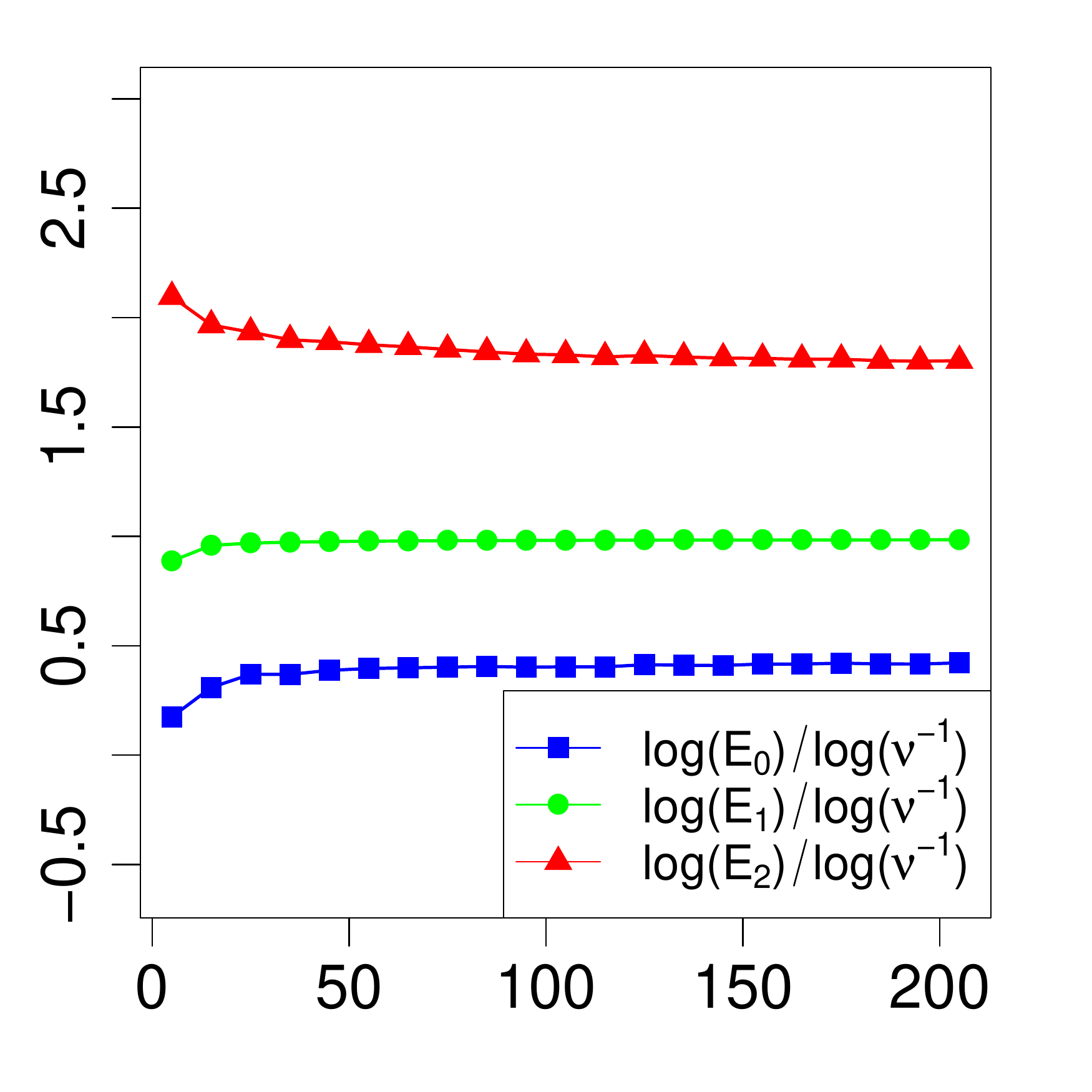}
            \vspace{-0.8cm}
            \caption{$\mat{S} = \begin{pmatrix} 2 & 1 \\ 1 & 4\end{pmatrix}$}
        \end{subfigure}
        \quad
        \begin{subfigure}[b]{0.22\textwidth}
            \centering
            \includegraphics[width=\textwidth, height=0.85\textwidth]{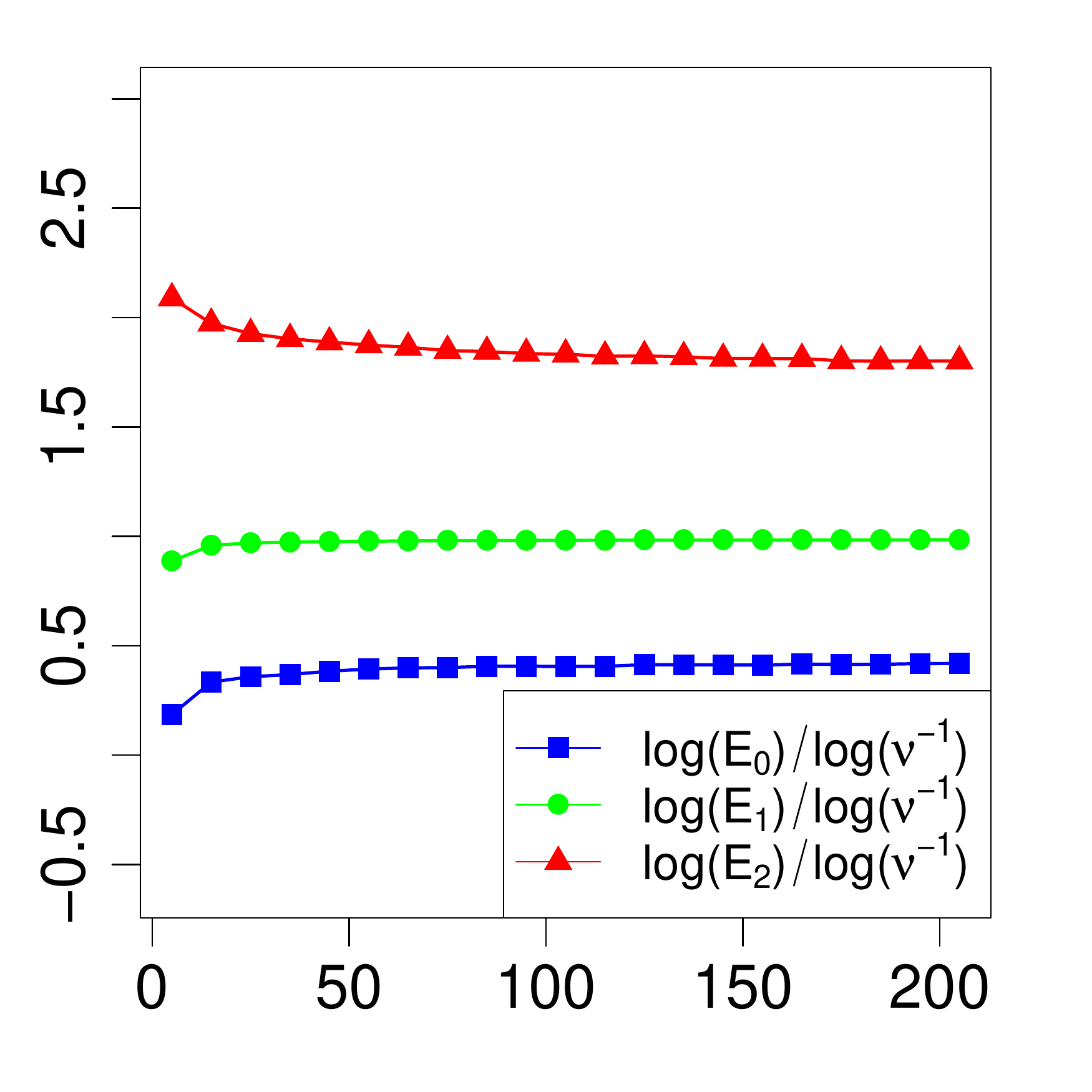}
            \vspace{-0.8cm}
            \caption{$\mat{S} = \begin{pmatrix} 2 & 1 \\ 1 & 5\end{pmatrix}$}
        \end{subfigure}
        \begin{subfigure}[b]{0.22\textwidth}
            \centering
            \includegraphics[width=\textwidth, height=0.85\textwidth]{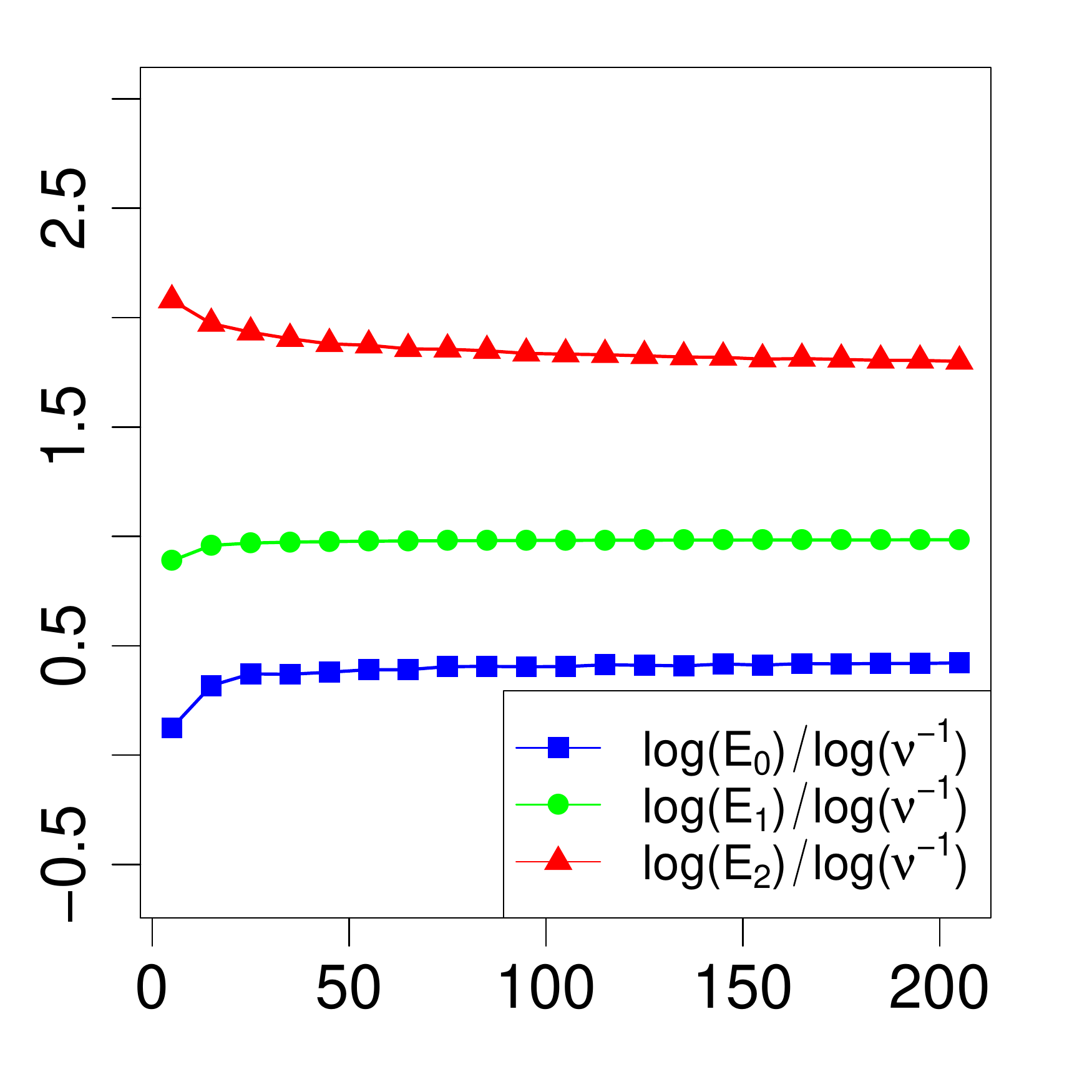}
            \vspace{-0.8cm}
            \caption{$\mat{S} = \begin{pmatrix} 3 & 1 \\ 1 & 2\end{pmatrix}$}
        \end{subfigure}
        \quad
        \begin{subfigure}[b]{0.22\textwidth}
            \centering
            \includegraphics[width=\textwidth, height=0.85\textwidth]{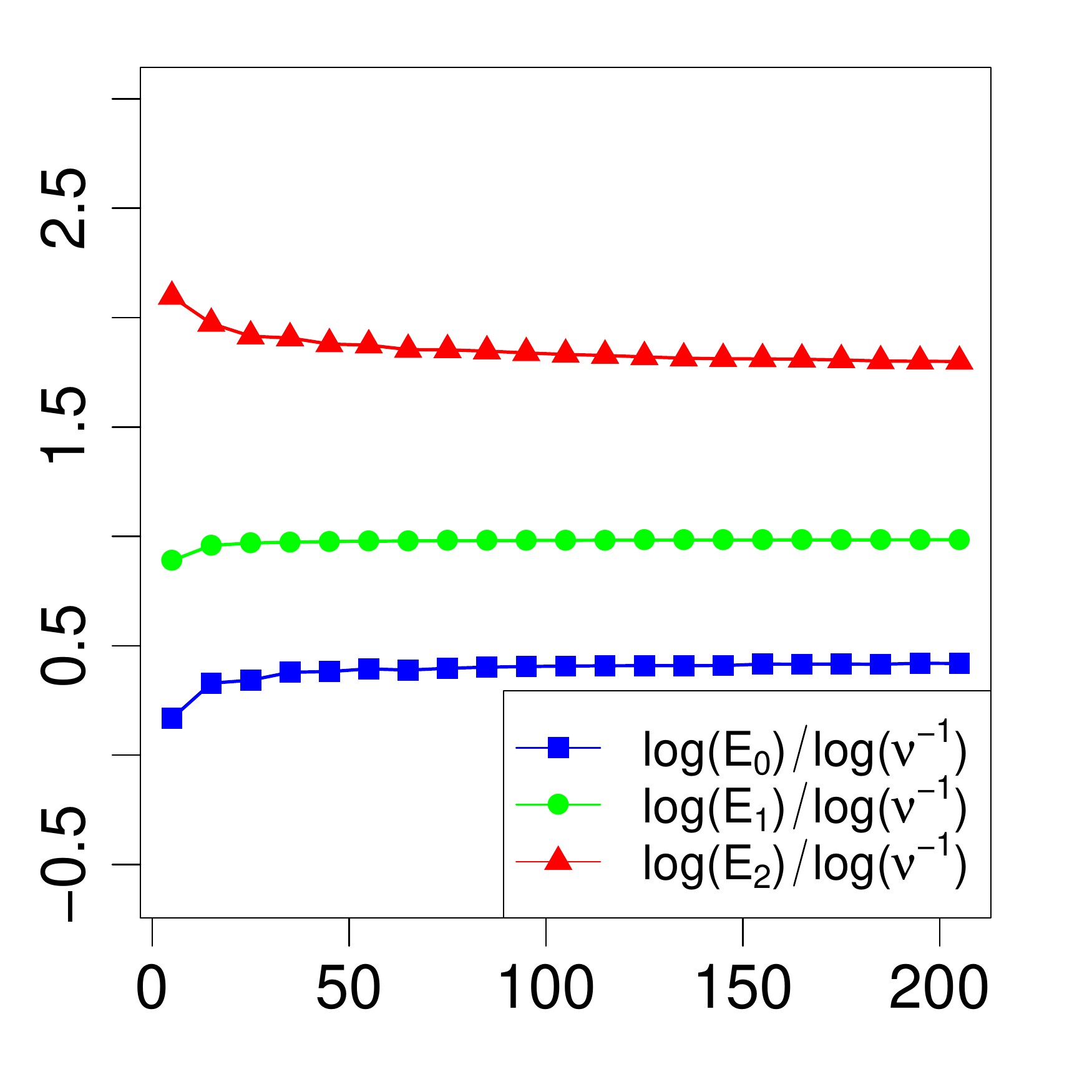}
            \vspace{-0.8cm}
            \caption{$\mat{S} = \begin{pmatrix} 3 & 1 \\ 1 & 3\end{pmatrix}$}
        \end{subfigure}
        \quad
        \begin{subfigure}[b]{0.22\textwidth}
            \centering
            \includegraphics[width=\textwidth, height=0.85\textwidth]{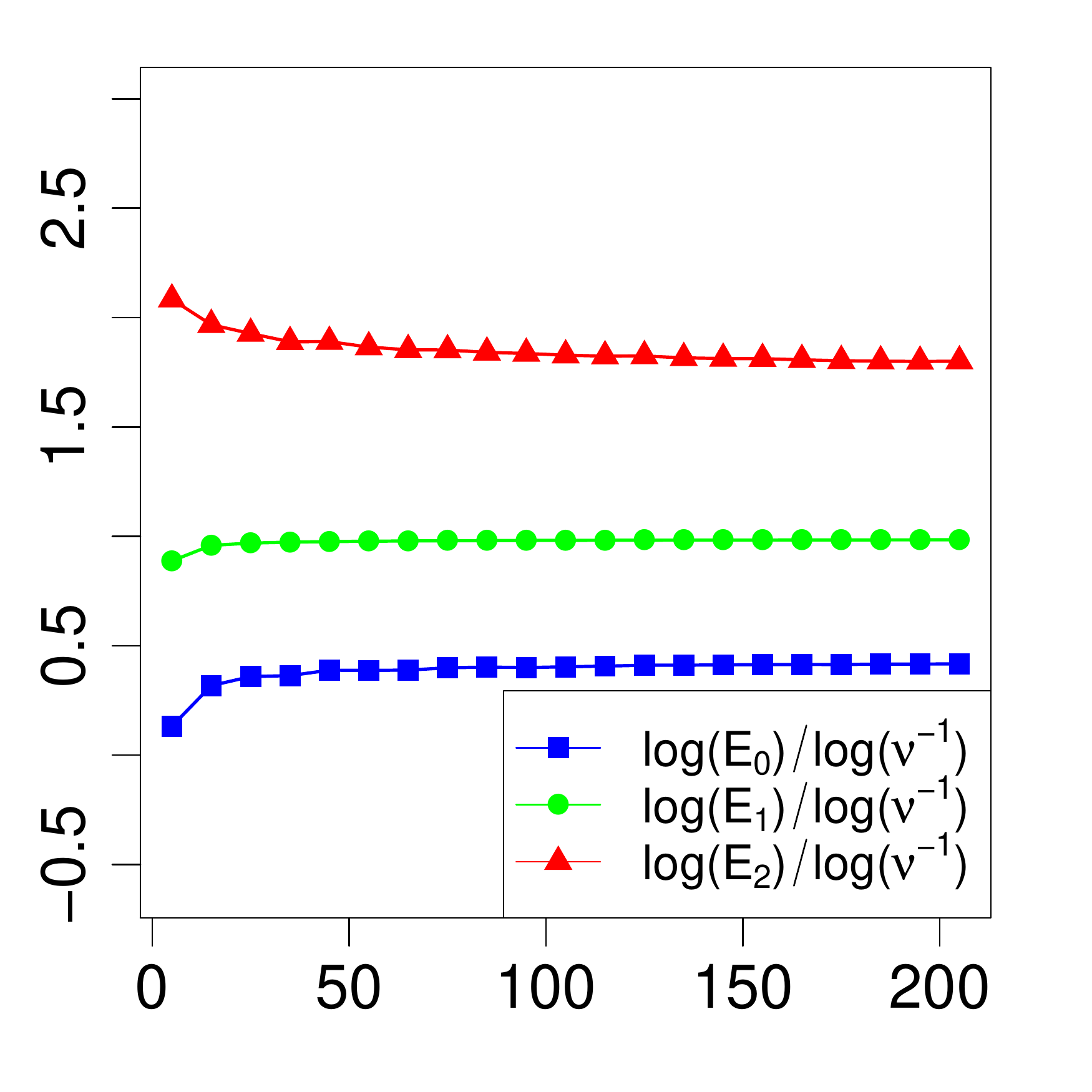}
            \vspace{-0.8cm}
            \caption{$\mat{S} = \begin{pmatrix} 3 & 1 \\ 1 & 4\end{pmatrix}$}
        \end{subfigure}
        \quad
        \begin{subfigure}[b]{0.22\textwidth}
            \centering
            \includegraphics[width=\textwidth, height=0.85\textwidth]{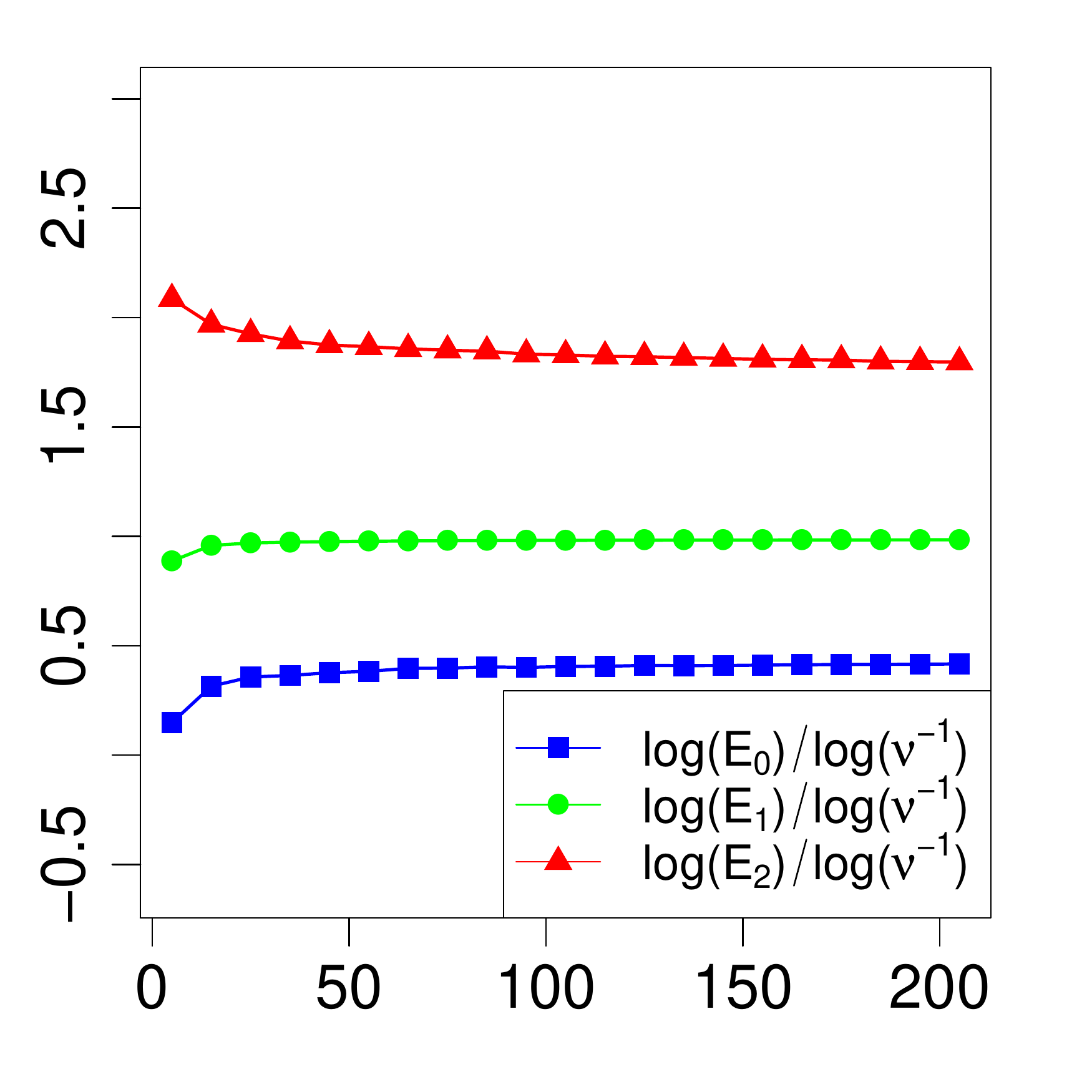}
            \vspace{-0.8cm}
            \caption{$\mat{S} = \begin{pmatrix} 3 & 1 \\ 1 & 5\end{pmatrix}$}
        \end{subfigure}
        \begin{subfigure}[b]{0.22\textwidth}
            \centering
            \includegraphics[width=\textwidth, height=0.85\textwidth]{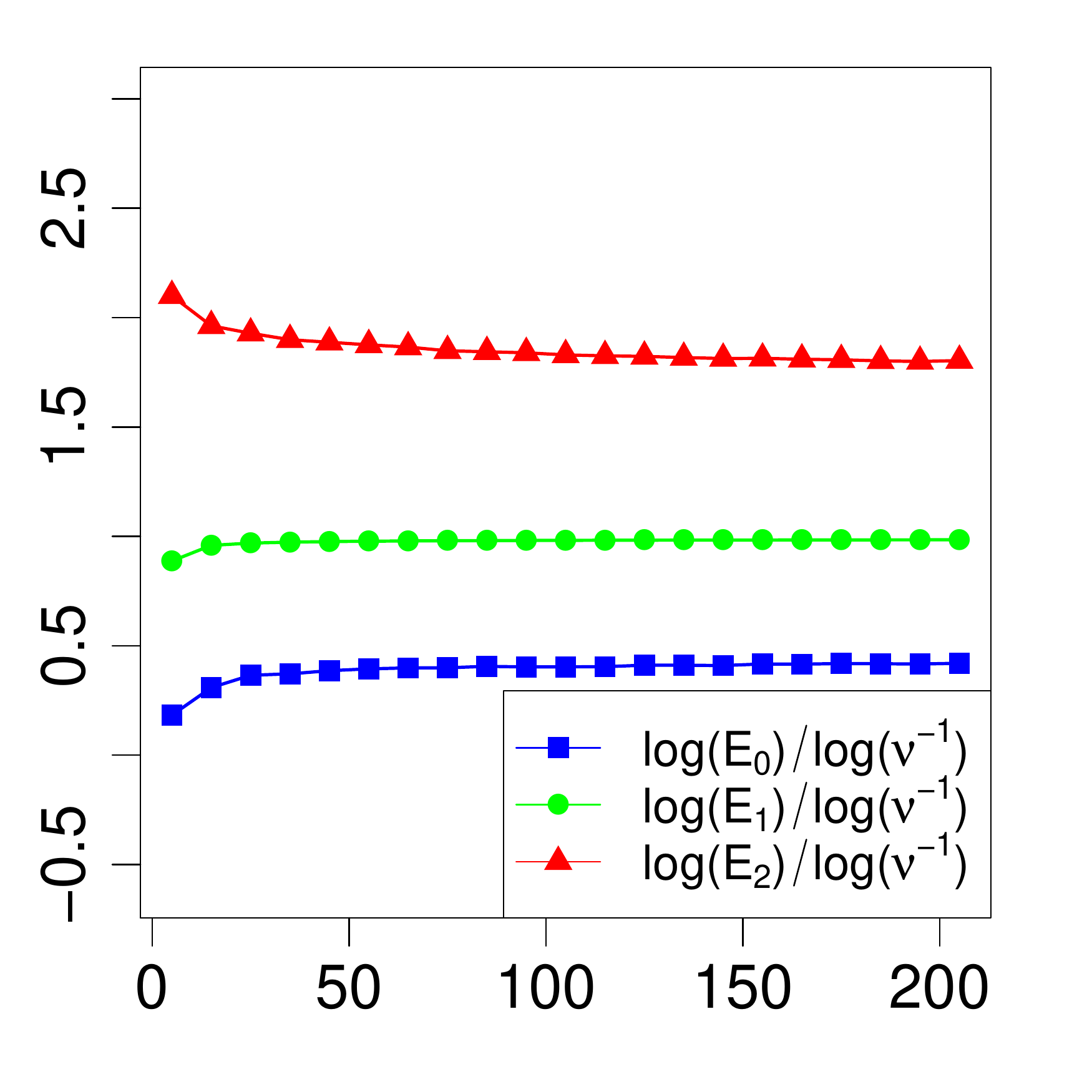}
            \vspace{-0.8cm}
            \caption{$\mat{S} = \begin{pmatrix} 4 & 1 \\ 1 & 2\end{pmatrix}$}
        \end{subfigure}
        \quad
        \begin{subfigure}[b]{0.22\textwidth}
            \centering
            \includegraphics[width=\textwidth, height=0.85\textwidth]{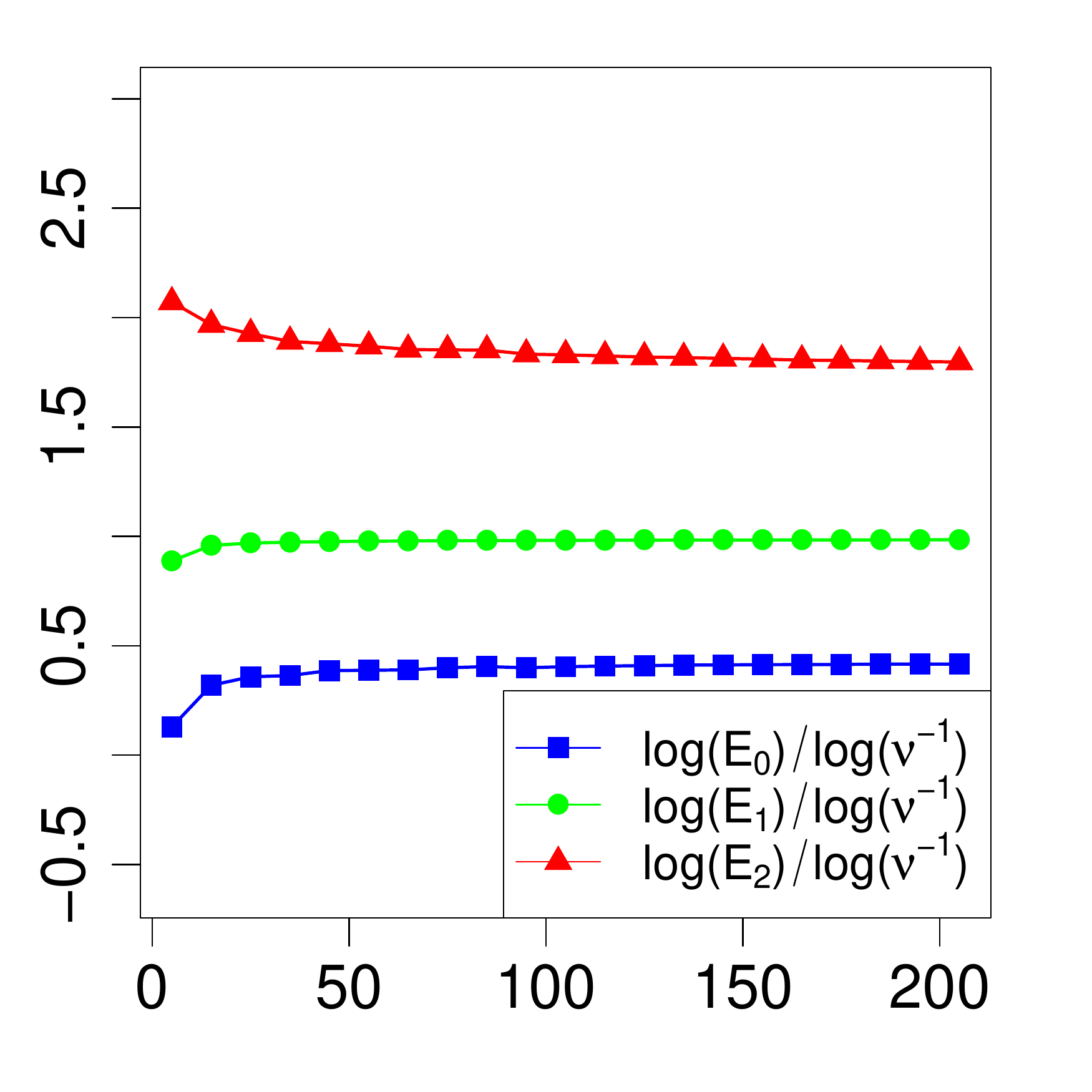}
            \vspace{-0.8cm}
            \caption{$\mat{S} = \begin{pmatrix} 4 & 1 \\ 1 & 3\end{pmatrix}$}
        \end{subfigure}
        \quad
        \begin{subfigure}[b]{0.22\textwidth}
            \centering
            \includegraphics[width=\textwidth, height=0.85\textwidth]{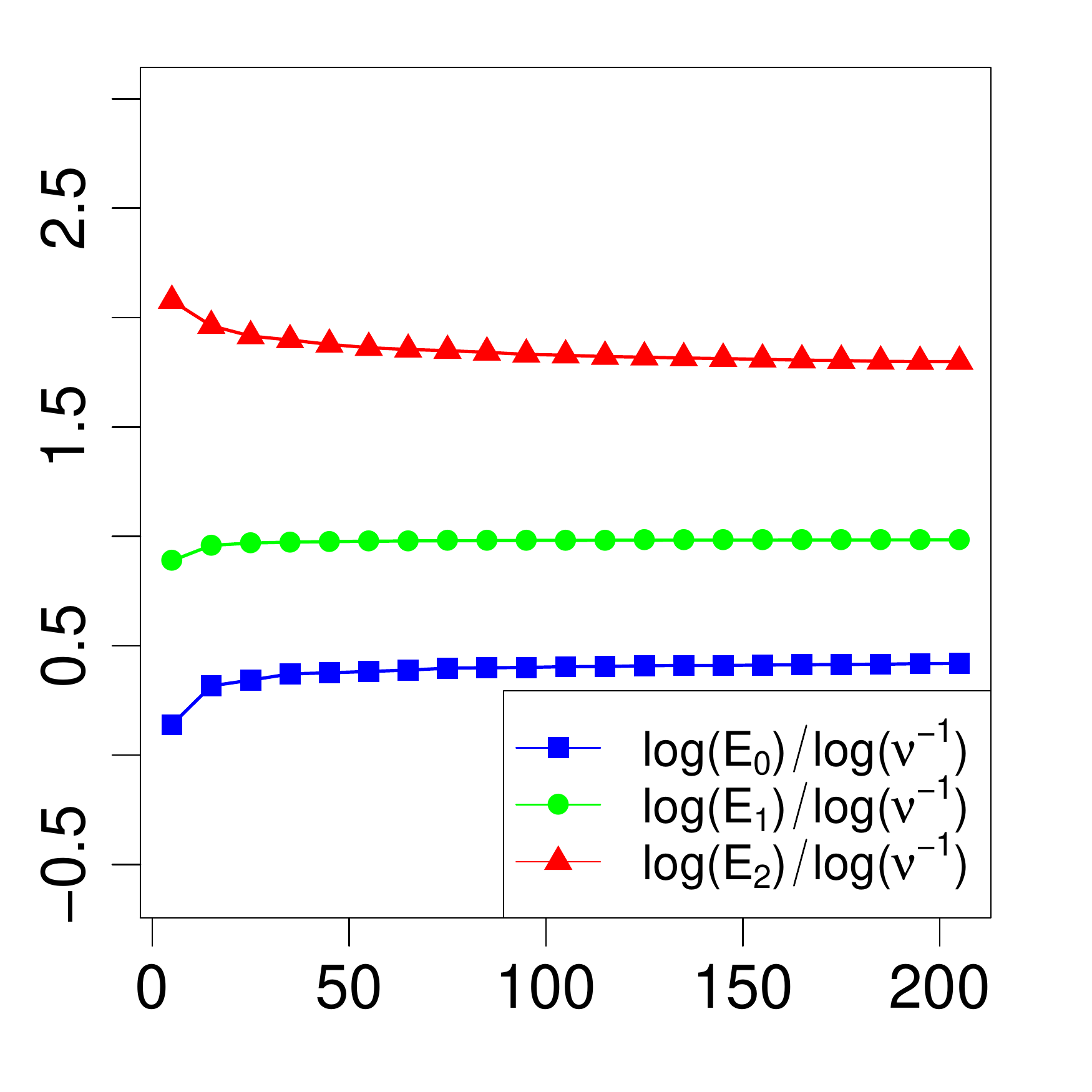}
            \vspace{-0.8cm}
            \caption{$\mat{S} = \begin{pmatrix} 4 & 1 \\ 1 & 4\end{pmatrix}$}
        \end{subfigure}
        \quad
        \begin{subfigure}[b]{0.22\textwidth}
            \centering
            \includegraphics[width=\textwidth, height=0.85\textwidth]{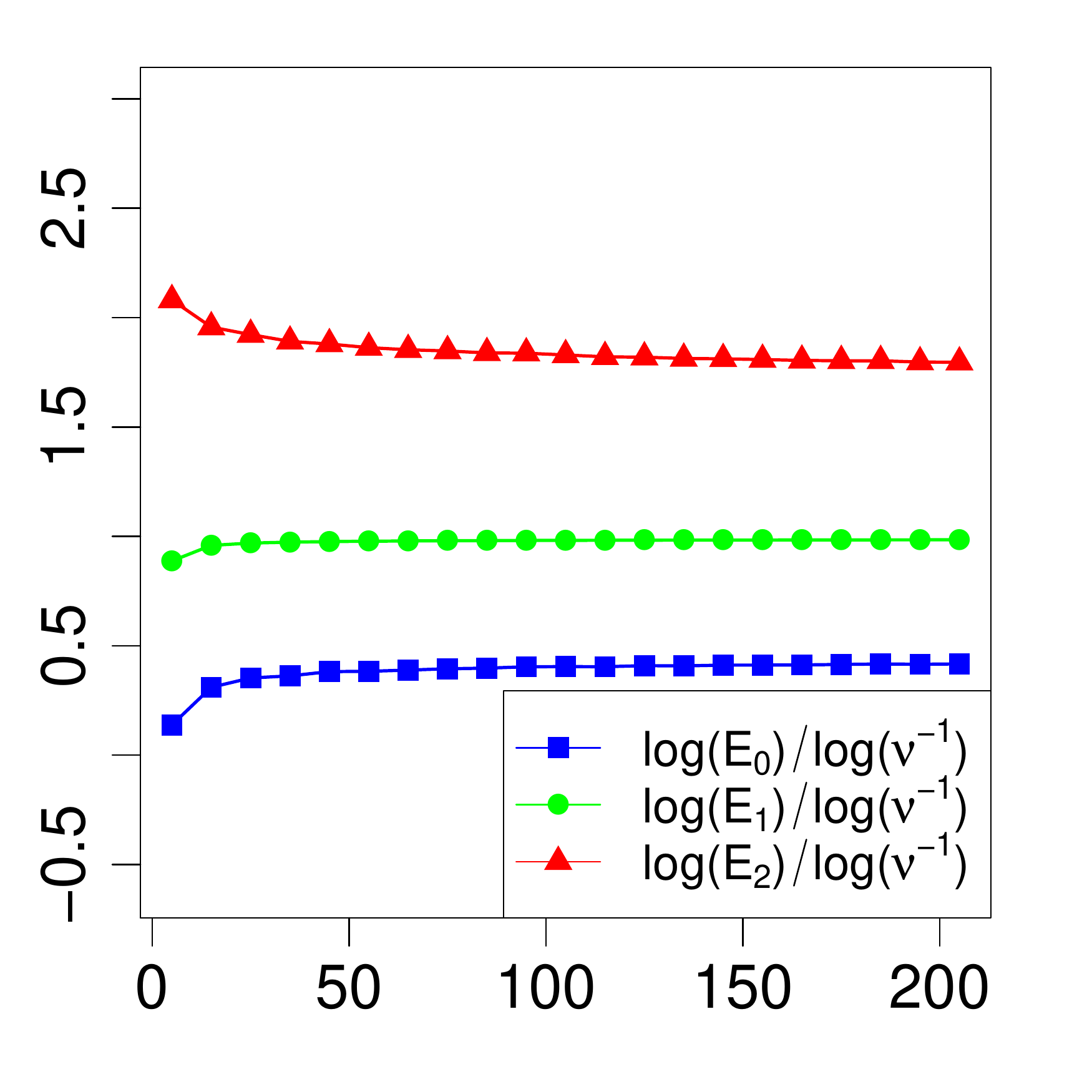}
            \vspace{-0.8cm}
            \caption{$\mat{S} = \begin{pmatrix} 4 & 1 \\ 1 & 5\end{pmatrix}$}
        \end{subfigure}
        \begin{subfigure}[b]{0.22\textwidth}
            \centering
            \includegraphics[width=\textwidth, height=0.85\textwidth]{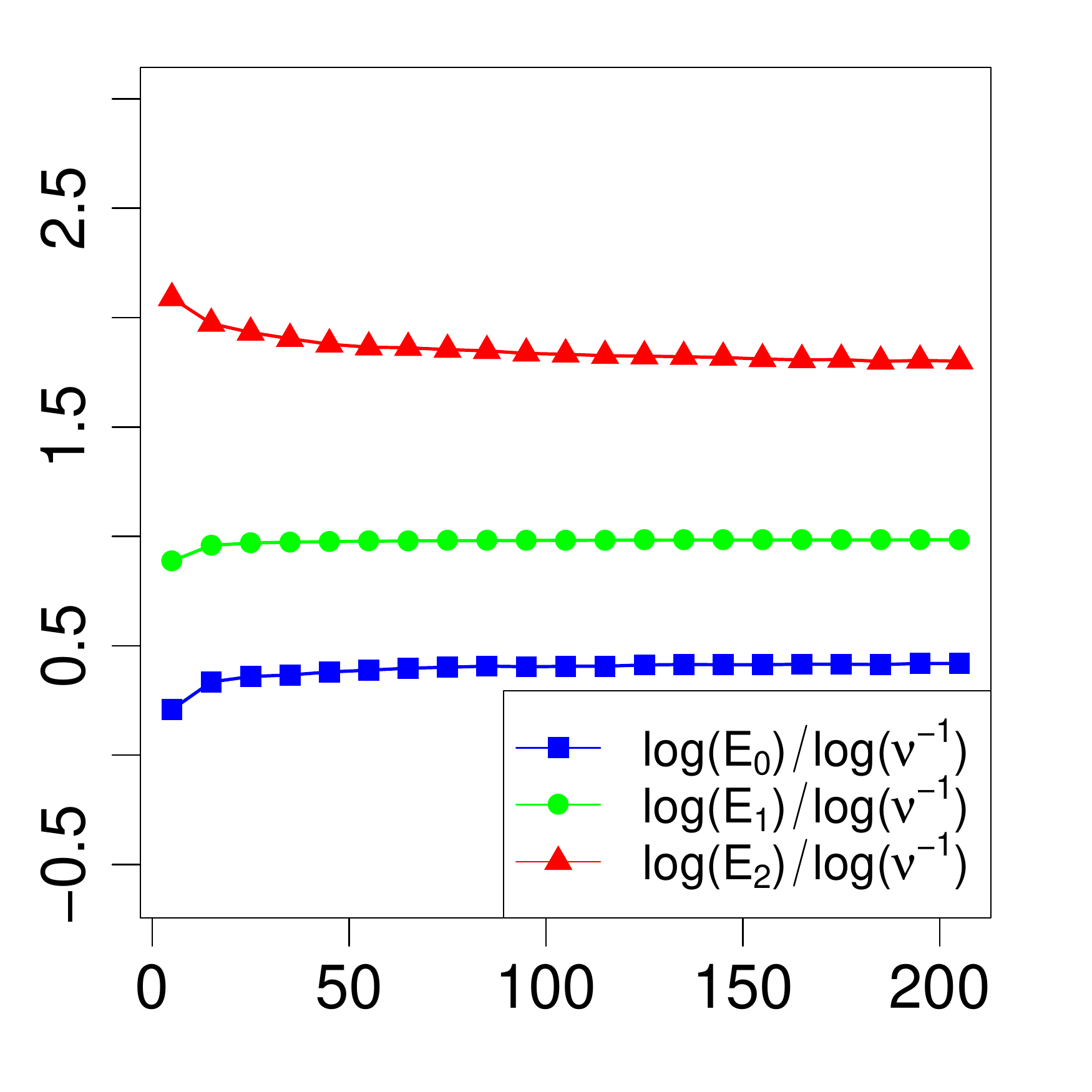}
            \vspace{-0.8cm}
            \caption{$\mat{S} = \begin{pmatrix} 5 & 1 \\ 1 & 2\end{pmatrix}$}
        \end{subfigure}
        \quad
        \begin{subfigure}[b]{0.22\textwidth}
            \centering
            \includegraphics[width=\textwidth, height=0.85\textwidth]{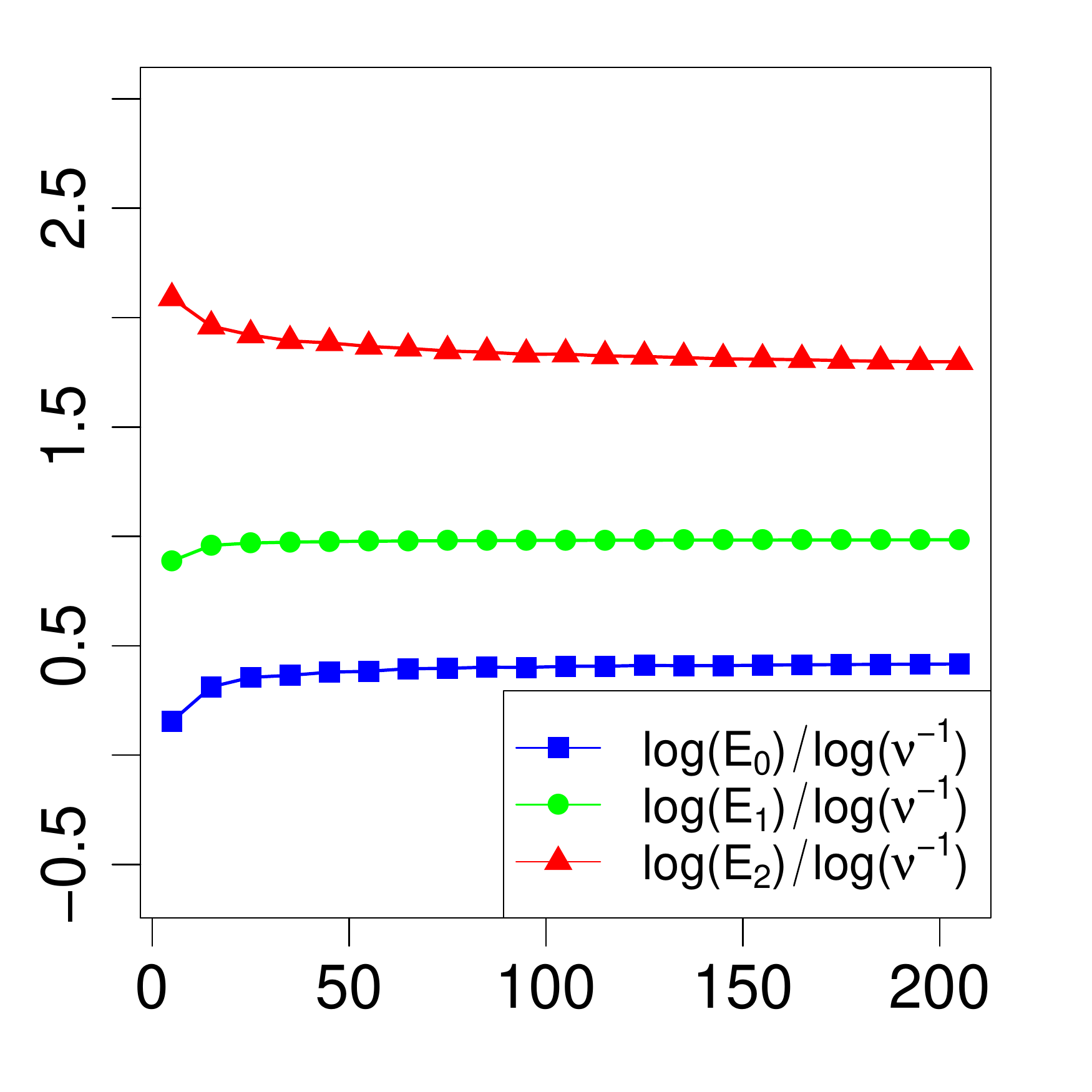}
            \vspace{-0.8cm}
            \caption{$\mat{S} = \begin{pmatrix} 5 & 1 \\ 1 & 3\end{pmatrix}$}
        \end{subfigure}
        \quad
        \begin{subfigure}[b]{0.22\textwidth}
            \centering
            \includegraphics[width=\textwidth, height=0.85\textwidth]{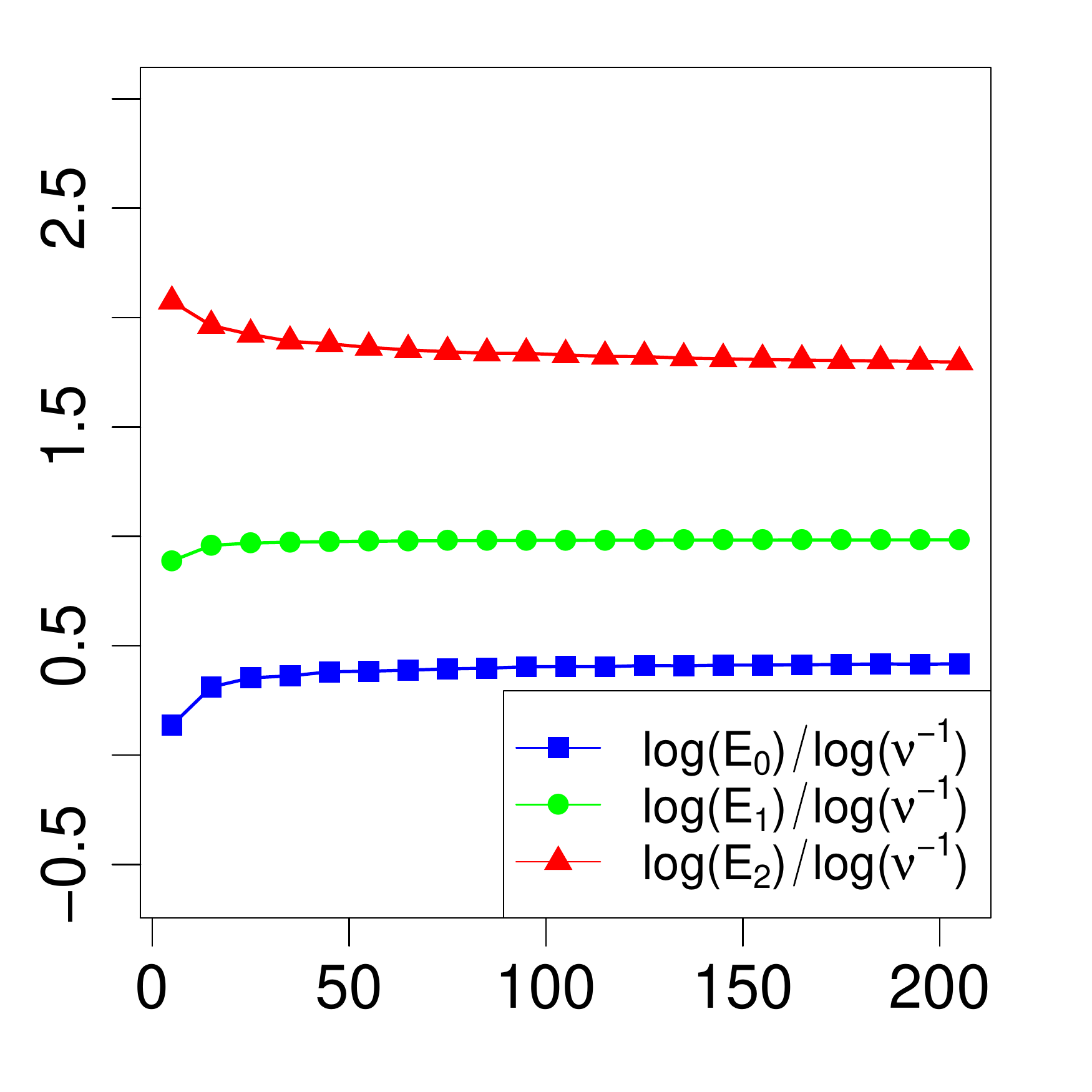}
            \vspace{-0.8cm}
            \caption{$\mat{S} = \begin{pmatrix} 5 & 1 \\ 1 & 4\end{pmatrix}$}
        \end{subfigure}
        \quad
        \begin{subfigure}[b]{0.22\textwidth}
            \centering
            \includegraphics[width=\textwidth, height=0.85\textwidth]{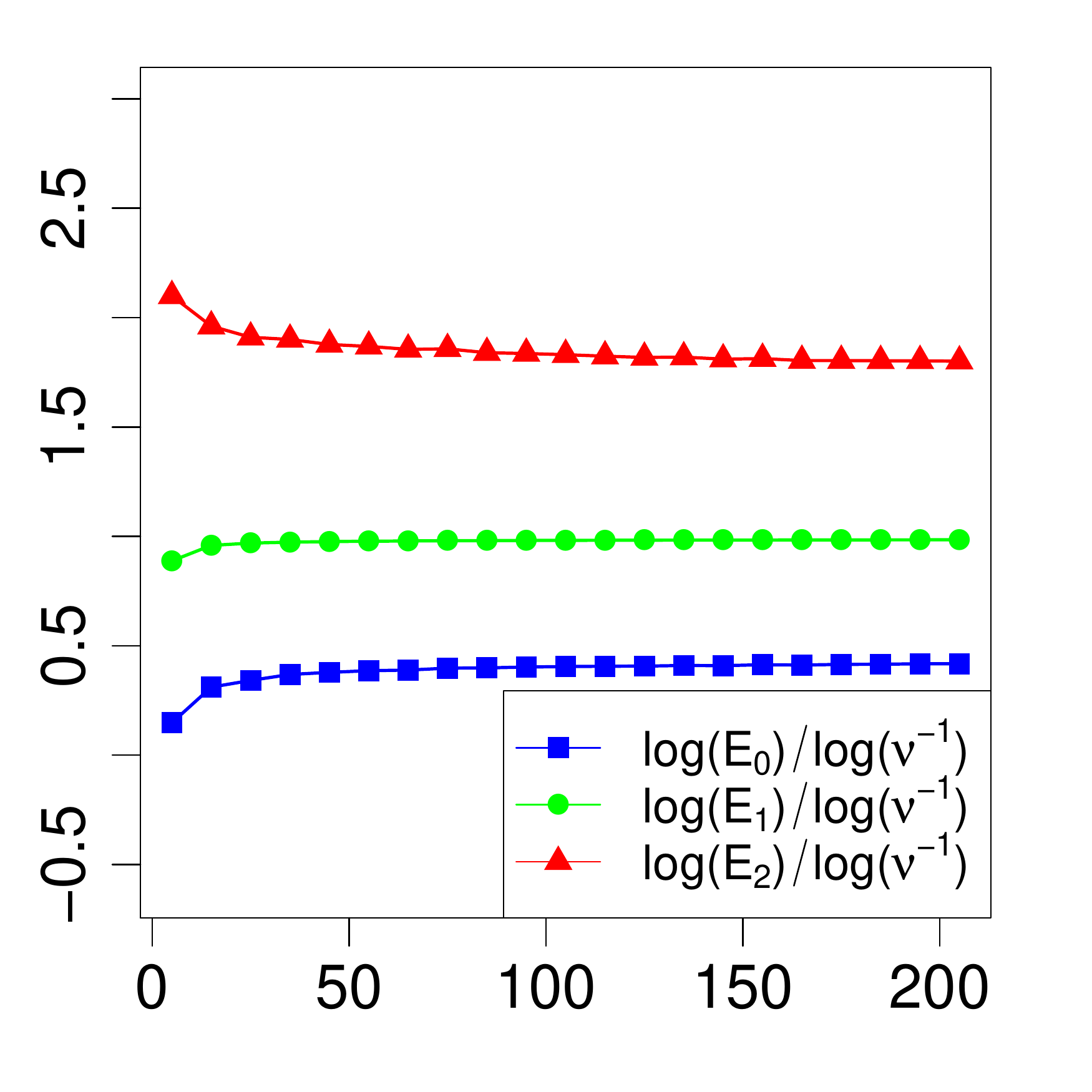}
            \vspace{-0.8cm}
            \caption{$\mat{S} = \begin{pmatrix} 5 & 1 \\ 1 & 5\end{pmatrix}$}
        \end{subfigure}
        \caption{Plots of $\log E_i / \log (\nu^{-1})$ as a function of $\nu$, for various choices of $\mat{S}$. The plots confirm \eqref{eq:liminf.exponent.bound} for our choices of $\mat{S}$ and bring strong evidence for the validity of Theorem~\ref{thm:p.k.expansion}.}
        \label{fig:error.exponents.plots}
    \end{figure}

\section{Applications}\label{sec:application}

    \subsection{Asymptotic properties of Wishart asymmetric kernel estimators}\label{sec:Wishart.asymmetric.kernel}

        Symmetric positive definite (SPD) matrix data are prevalent in modern statistical applications.
        As pointed out by \citet{Hadjicosta2019phd} and \citet{MR4169380}, where goodness-of-fit tests for the Wishart distribution were developed based on integral transforms, factor analysis, diffusion tensor imaging, CMB radiation measurements, volatility models in finance, wireless communication systems and polarimetric radar imaging are just a few areas where SPD matrix data might be observed.
        Some articles have dealt with methods of density estimation on this space but the literature remains relatively scarce.
        \citet{doi:10.1109/ICPR.2014.165} show how truncated Fourier series can be used for various applications, \citet{Kim_Richards_2008_worshop,MR2838725} and \citet{MR3012414} explore the deconvolution of Wishart mixtures, \citet{MR3606419} adapt the kernel estimator on compact Riemannian manifolds introduced by \citet{MR2179289} to compact subsets of the space of multivariate Gaussian distributions under the Fisher information metric and the Wasserstein metric, and \citet{MR4172886} defines a kernel density estimator on symmetric spaces of non-compact type (similar to Pelletier's but for which Helgason–Fourier transforms are defined) and proves an upper bound on the convergence rate that is analogous to the minimax rate of classical kernel estimators on Euclidean spaces.

        In a recent preprint, \citet{arXiv:2009.01983} consider log-Gaussian kernel estimators (based on the logarithm map for SPD matrices) and a variant of the Wishart asymmetric kernel estimator that is slightly different from our definition below in \eqref{eq:Wishart.estimator}. They prove various asymptotic properties for the former and a simulation study compares them both.
        If we were to apply traditional multivariate kernel estimators to the vectorization (in $\R^{d(d + 1)/2}$, recall \eqref{eq:vectorization}) of a sequence of i.i.d.\ random SPD matrices, then these estimators would misbehave near the boundary because of the condition on the eigenvalues (i.e., that they remain positive), and the usual boundary kernel modifications would not be appropriate either since positive definiteness is not a condition that can be translated to individual bounds on the entries of a matrix.
        To the best of our knowledge, \cite{arXiv:2009.01983} is the only paper that presents estimators on the space of SPD matrices that address (implicitly) the spill over problem of traditional multivariate kernel estimators caused by the boundary condition on the eigenvalues.
        Similarly to \citet{arXiv:2009.01983}, we can construct a new density estimator with a Wishart asymmetric kernel that creates a variable smoothing in our space and has a uniformly negligible bias near the boundary of $\mathcal{S}_{++}^{\hspace{0.3mm}d}$.

        In terms of applications, our new density estimation method on $\mathcal{S}_{++}^{\hspace{0.3mm}d}$ could be used, apart from visualization purposes, for nonparametric alternatives to regression and classification (both supervised and unsupervised) in any of the fields mentioned at the beginning of the first paragraph in this section.
        The favorable boundary properties of our estimator (the proof of Theorem~\ref{thm:bias} below shows that the pointwise bias is asymptotically uniformly negligible near the boundary) means that it could be particularly useful for scarce data sets and/or data sets with clusters of observations near the boundary of $\mathcal{S}_{++}^{\hspace{0.3mm}d}$.

        Here is the definition of our estimator.
        Assume that we have a sequence of observations $\mat{X}_1, \dots, \mat{X}_n\in \mathcal{S}_{++}^{\hspace{0.3mm}d}$ that are independent and $F$ distributed ($F$ is unknown), with density $f$ supported on $\mathcal{S}_{++}^{\hspace{0.3mm}d}$ for some $d\in \N$.
        Then, for a given bandwidth parameter $b > 0$, let
        \begin{equation}\label{eq:Wishart.estimator}
            \hat{f}_{n,b}(\mat{S}) \leqdef \frac{1}{n} \sum_{i=1}^n K_{1/b, b \hspace{0.3mm} \mat{S}}(\mat{X}_i), \quad \mat{S}\in \mathcal{S}_{++}^{\hspace{0.3mm}d},
        \end{equation}
        be the (or a) Wishart asymmetric kernel estimator for the density function $f$, where $K_{1/b, b \hspace{0.3mm} \mat{S}}(\cdot)$ is defined in \eqref{eq:Wishart.density}.
        The estimator $\hat{f}_{n,b}$ can be seen as a continuous example in the broader class of multivariate associated kernel estimators introduced by \citet{MR3760293,Kokonendji_Some_2021}.
        It is also a natural generalization of a slight variant of the (unmodified) Gamma kernel estimator introduced by \citet{MR1794247} because the Wishart distribution (recall \eqref{eq:Wishart.density}) is a matrix-variate analogue of the Gamma distribution.
        In \cite{MR1794247,MR2179543,MR2206532,MR2454617,MR2568128,MR2756423,MR2595129,MR2801351,MR3304359}, many asymptotic properties for Gamma kernel estimators of density functions supported on the half-line $[0,\infty)$ were studied, among other things: pointwise bias, pointwise variance, mean squared error, mean integrated squared error, asymptotic normality and uniform strong consistency.
        Also, bias reduction techniques were explored by \citet{MR3843043} and \citet{MR3384258}, and adaptative Bayesian methods of bandwidth selection were presented by \citet{doi:10.1080/03610918.2020.1828921} and \citet{doi:10.1080/02664763.2021.1881456}.

        Below, we show how some of the asymptotic properties of $\hat{f}_{n,b}$ can be studied using the asymptotic expansion developed in Theorem~\ref{thm:p.k.expansion}.
        Assume that $f$ is Lipschitz continuous and bounded on $\mathcal{S}_{++}^{\hspace{0.3mm}d}$.
        (To make sense of this assumption, note that $\mathcal{S}_{++}^{\hspace{0.3mm}d}$ is an open and convex subset in the space of symmetric matrices of size $d\times d$, which itself is isomorphic to $\R^{d(d + 1)/2}$.)
        Then, straightforward calculations show that, for any given $\mat{S}\in \mathcal{S}_{++}^{\hspace{0.3mm}d}$,
        \begin{align}\label{eq:histogram.estimator.var.asymp}
            \VV(\hat{f}_{n,b}(\mat{S})) = n^{-1} \, \EE\left[\left(K_{1/b, b \hspace{0.3mm} \mat{S}}(\mat{X})\right)^2\right] - n^{-1} \left(\EE\left[K_{1/b, b \hspace{0.3mm} \mat{S}}(\mat{X})\right]\right)^2 = n^{-1} \, \EE\left[\left(K_{1/b, b \hspace{0.3mm} \mat{S}}(\mat{X})\right)^2\right] - \OO_{d,\mat{S}}(n^{-1}),
        \end{align}
        where
        \begin{align}\label{eq:histogram.estimator.var.asymp.next}
            \EE\left[\left(K_{1/b, b \hspace{0.3mm} \mat{S}}(\mat{X})\right)^2\right]
            &\stackrel{\eqref{eq:LLT.order.2}}{=} \int_{\mathcal{S}^{\hspace{0.3mm}d}} g_{1/b, b \hspace{0.3mm} \mat{S}}^2(\mat{X}) \, f(\mat{X}) \, \rd \mat{X} + \oo_{d,\mat{S}}(1) \stackrel{\phantom{\eqref{eq:LLT.order.2}}}{=} \frac{2^{-d(d + 1)/4} \, (f(\mat{S}) + \OO_{d,\mat{S}}(b^{1/2}))}{\sqrt{(2\pi)^{d(d + 1)/2} \, 2^{-d(d - 1)/2} |\sqrt{2 b} \, \mat{S}|^{d + 1}}} \underbrace{\int_{\mathcal{S}^{\hspace{0.3mm}d}} g_{1/b, \frac{b}{\sqrt{2}} \hspace{0.3mm} \mat{S}}(\mat{X}) \rd \mat{X}}_{=~1 + \oo_{d,\mat{S}}(1)} \, + \, \oo_{d,\mat{S}}(1) \notag \\[-4mm]
            &\stackrel{\phantom{\eqref{eq:LLT.order.2}}}{=} b^{-d(d + 1)/4} \, \frac{|\sqrt{\pi} \, \mat{S}|^{-\frac{d + 1}{2}}}{2^{d(d + 2)/2}} \, (f(\mat{S}) + \OO_{d,\mat{S}}(b^{1/2})).
        \end{align}
        By applying this last estimate in \eqref{eq:histogram.estimator.var.asymp}, we obtain the pointwise variance.

        \begin{proposition}[Pointwise variance]\label{prop:pointwise.variance}
            Assume that $f$ is Lipschitz continuous and bounded on $\mathcal{S}_{++}^{\hspace{0.3mm}d}$.
            For any given $\mat{S}\in \mathcal{S}_{++}^{\hspace{0.3mm}d}$, we have
            \begin{equation}
                \VV(\hat{f}_{n,b}(\mat{S})) = n^{-1} b^{-d(d + 1)/4} \, \frac{|\sqrt{\pi} \, \mat{S}|^{-\frac{d + 1}{2}}}{2^{d(d + 2)/2}} \, (f(\mat{S}) + \OO_{d,\mat{S}}(b^{1/2})), \quad n\to \infty.
            \end{equation}
        \end{proposition}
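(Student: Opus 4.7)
The plan is to follow the decomposition already sketched in~\eqref{eq:histogram.estimator.var.asymp}--\eqref{eq:histogram.estimator.var.asymp.next} and supply the missing justifications. I would begin with the standard variance identity
\[
    \VV(\hat f_{n,b}(\mat S)) = n^{-1}\,\EE\!\left[K_{1/b,b\mat S}(\mat X)^2\right] - n^{-1}\bigl(\EE[K_{1/b,b\mat S}(\mat X)]\bigr)^2,
\]
and note that the second term is $\OO_{d,\mat S}(n^{-1})$ by pointwise asymptotic unbiasedness (formally Theorem~\ref{thm:bias}), which is negligible against the claimed leading order $n^{-1}\,b^{-d(d+1)/4}$. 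All the work then goes into estimating the second moment $\EE[K_{1/b,b\mat S}^2(\mat X)] = \int_{\mathcal{S}_{++}^{\hspace{0.3mm}d}} K_{1/b,b\mat S}^2\, f\,\rd\mat X$.

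Next, I would split this integral over the bulk $B_{1/b,b\mat S}(\eta)$ from~\eqref{eq:thm:p.k.expansion.condition} (for some fixed $\eta\in(0,1)$) and its complement. On the complement, the super-polynomial decay of $K_{1/b,b\mat S}$ in the eigenvalues of $\Delta_{1/b,b\mat S}$ (via Stirling applied to the rewriting of the Wishart density) makes the tail piece $\oo(b^{-d(d+1)/4})$. On the bulk, I would square the ratio expansion~\eqref{eq:LLT.order.2} of Theorem~\ref{thm:p.k.expansion} to obtain
\[
    K_{1/b,b\mat S}(\mat X)^2 = g_{1/b,b\mat S}(\mat X)^2\,\Big(1 + \OO_{d,\eta}\bigl(b^{1/2}\bigl(1 + \max_{1\le i\le d}|\lambda_i(\Delta_{1/b,b\mat S})|^{18}\bigr)\bigr)\Big).
\]
The problem then reduces to computing $\int g_{1/b,b\mat S}^2\, f\,\rd\mat X$ and controlling the integrated error.

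For the leading Gaussian integral, I would apply the change of variables $\mat X = (\sqrt{2b}\,\mat S)^{1/2}\mat Y (\sqrt{2b}\,\mat S)^{1/2} + \mat S$, whose Jacobian on $\mathcal{S}^{\hspace{0.3mm}d}$ is $|\sqrt{2b}\,\mat S|^{(d+1)/2}$ by Eq.~(1.2.18) of \citet{Gupta_Nagar_1999}. This reduces the remaining integral to $\int_{\mathcal{S}^{\hspace{0.3mm}d}}\exp(-\mathrm{tr}(\mat Y^2))\,\rd\mat Y$, which factorizes into one-dimensional Gaussian integrals over the $d$ diagonal and $d(d-1)/2$ off-diagonal entries and evaluates to $\pi^{d(d+1)/4}\,2^{-d(d-1)/4}$. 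Collecting the constants gives
\[
    \int_{\mathcal{S}^{\hspace{0.3mm}d}} g_{1/b,b\mat S}^2\,\rd\mat X = \frac{b^{-d(d+1)/4}\,|\sqrt{\pi}\,\mat S|^{-(d+1)/2}}{2^{d(d+2)/2}}.
\]
Lipschitz continuity of $f$ combined with $\|\mat X - \mat S\| = \OO_{\mat S}(b^{1/2})$ on the effective Gaussian support then upgrades this to $\int g_{1/b,b\mat S}^2\, f\,\rd\mat X = (f(\mat S) + \OO_{d,\mat S}(b^{1/2}))\int g_{1/b,b\mat S}^2\,\rd\mat X$.

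The main technical obstacle is controlling the $|\lambda|^{18}$ factor in the multiplicative error from Theorem~\ref{thm:p.k.expansion} once integrated against $g_{1/b,b\mat S}^2\, f$. I would handle this by normalizing $g_{1/b,b\mat S}^2$ to a probability measure (a matrix-variate normal with halved covariance): under this measure the entries of $\mat Y$ are independent centered Gaussians with $b$-independent variances, so $\max_i|\lambda_i(\mat Y)|$ has uniformly bounded moments of all orders depending only on $d$. The error term therefore contributes only an extra $\OO_{d,\eta}(b^{1/2})$ multiplicative factor, and combining everything with the leading Gaussian integral yields the stated expression for $\VV(\hat f_{n,b}(\mat S))$.
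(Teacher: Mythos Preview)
Your proposal is correct and follows essentially the same route as the paper's derivation preceding Proposition~\ref{prop:pointwise.variance}: apply Theorem~\ref{thm:p.k.expansion} with $\nu=1/b$ to replace $K_{1/b,b\mat{S}}^2$ by $g_{1/b,b\mat{S}}^2$ up to a controlled multiplicative error, and then evaluate the resulting Gaussian integral. The only cosmetic differences are that the paper computes $\int g_{1/b,b\mat{S}}^2\,\rd\mat{X}$ by recognizing $g_{1/b,b\mat{S}}^2$ as a constant times the SMN density $g_{1/b,\,b\mat{S}/\sqrt{2}}$ (which integrates to~$1$) rather than by your explicit change of variables, and that your appeal to Theorem~\ref{thm:bias} for the bound $n^{-1}(\EE[K_{1/b,b\mat{S}}(\mat{X})])^2=\OO_{d,\mat{S}}(n^{-1})$ is unnecessary---boundedness of $f$ already gives $\EE[K_{1/b,b\mat{S}}(\mat{X})]\le\|f\|_\infty$.
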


        From this, other asymptotic expressions can be derived such as the mean squared error and the mean integrated squared error, and we can also optimize the bandwidth parameter $b$ with respect these expressions to implement a plug-in selection method exactly as we would in the setting of traditional multivariate kernel estimators, see, e.g., \citet[Section 6.5]{MR3329609} or \citet[Section 3.6]{MR3822372}.
        The expressions for the mean squared error and the mean integrated squared error (away from the boundary) are provided below in Corollary~\ref{cor:bias.var.implies.MSE.density} and Theorem~\ref{thm:MISE.optimal.density}, respectively, together with the corresponding optimal choice of $b$.
        For the sake of completeness, we also provide results on the pointwise bias of our estimator (see Theorem~\ref{thm:bias}), its pointwise variance as we move towards the boundary of $\mathcal{S}_{++}^{\hspace{0.3mm}d}$ (see Theorem~\ref{thm:variance.near.boundary}) and its asymptotic normality (see Theorem~\ref{thm:Theorem.3.2.and.3.3.Babu.Canty.Chaubey}).

        For each result in the remainder of this section, one of the following two assumptions will be used:
        \begin{alignat}{3}
            &\bullet \quad &&\text{The density $f$ is Lipschitz continuous and bounded on $\mathcal{S}_{++}^{\hspace{0.3mm}d}$.} \label{eq:assump:f.density} \\
            &\bullet \quad &&\text{The density $f$  and its first order partial derivatives are continuous and bounded on $\mathcal{S}_{++}^{\hspace{0.3mm}d}$,} \\[-1mm]
            & &&\text{and the second order partial derivatives of $f$ are uniformly continuous and bounded on $\mathcal{S}_{++}^{\hspace{0.3mm}d}$.} \label{eq:assump:f.density.2}
        \end{alignat}

        \begin{remark}\label{rem:open.and.convex}
            {\normalfont
            Again, to make sense of the above assumptions, note that $\mathcal{S}_{++}^{\hspace{0.3mm}d}$ is an open and convex subset in the space of symmetric matrices of size $d\times d$, denoted by $\mathcal{S}^{\hspace{0.3mm}d}$, which itself is isomorphic to $\R^{d(d + 1)/2}$.
            }
        \end{remark}

        We denote the expectation of $\hat{f}_{n,b}(\mat{S})$ by
        \begin{equation}\label{eq:f.b.star.representation.1}
            \begin{aligned}
                f_b(\mat{S}) \leqdef \EE\left[\hat{f}_{n,b}(\mat{S})\right]
                &= \EE[K_{1/b, b \hspace{0.3mm} \mat{S}}(\mat{X}_1)] = \int_{\mathcal{S}_{++}^{\hspace{0.3mm}d}} K_{1/b, b \hspace{0.3mm} \mat{S}}(\mat{M}) f(\mat{M}) \rd \mat{M}.
            \end{aligned}
        \end{equation}
        Alternatively, notice that if $\mat{W}_{\mat{S}}\sim \mathrm{Wishart}_d(1/b, b \hspace{0.3mm} \mat{S})$, then we also have the representation
        \begin{equation}\label{eq:f.b.star.representation.2}
            f_b(\mat{S}) = \EE[f(\mat{W}_{\mat{S}})].
        \end{equation}

        The asymptotics of the pointwise bias and variance were first computed by \citet{MR1718494,MR1794247} for Beta and Gamma kernel estimators, by \citet{doi:10.1016/j.jmva.2021.104832} for the Dirichlet kernel estimator of \citet{doi:10.2307/2347365}, and by  \citet{MR3760293,Kokonendji_Some_2021} for multivariate associated kernel estimators.
        The next two theorems below extend the (unmodified) Gamma case to our multidimensional setting.

        \begin{theorem}[Pointwise bias]\label{thm:bias}
            Assume that \eqref{eq:assump:f.density.2} holds.
            Then as $b\to 0$, and for all $\mat{S}\in \mathcal{S}_{++}^{\hspace{0.3mm}d}$, we have
            \begin{equation}\label{eq:thm:bias.var.density.eq.bias}
                \BB[\hat{f}_{n,b}(\mat{S})] = f_b(\mat{S}) - f(\mat{S}) = b \, g(\mat{S}) + \oo_{d,\mat{S}}(b),
            \end{equation}
            where
            \begin{equation}\label{def:g}
                \begin{aligned}
                    g(\mat{S})
                    &\leqdef \frac{1}{2} \sum_{\substack{\bb{i},\bb{j}\in [d]^2 \\ i_1 \leq i_2, \, j_1 \leq j_2}} \left[2 \hspace{0.3mm} B_d^{\top} (\mat{S} \otimes \mat{S}) B_d\right]_{(\bb{i},\bb{j})} \, \frac{\partial^2}{\partial \mat{S}_{\bb{i}} \partial \mat{S}_{\bb{j}}} f(\mat{S}),
                \end{aligned}
            \end{equation}
            and where $\otimes$ denotes the Kronecker product, and $[ \, \cdot \, ]_{(\bb{i},\bb{j})}$ means that we select the entry $((i_2 - 1) i_2 / 2 + i_1, (j_2 - 1) j_2 / 2 + j_1)$ in the $(d(d+1)/2) \times (d(d+1)/2)$ matrix.
        \end{theorem}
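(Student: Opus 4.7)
The plan is to use the representation $f_b(\mat{S}) = \EE[f(\mat{W}_{\mat{S}})]$ from \eqref{eq:f.b.star.representation.2}, where $\mat{W}_{\mat{S}} \sim \mathrm{Wishart}_d(1/b, b \hspace{0.3mm} \mat{S})$. Setting $\nu = 1/b$ and scale matrix $b \hspace{0.3mm} \mat{S}$ in \eqref{eq:expectation.explicit.estimate}--\eqref{eq:covariance.explicit.estimate} immediately gives $\EE[\mat{W}_{\mat{S}}] = \mat{S}$ and
\[
\VV(\vecp(\mat{W}_{\mat{S}})) = B_d^{\top}(\sqrt{2b}\,\mat{S} \otimes \sqrt{2b}\,\mat{S}) B_d = 2 b \cdot B_d^{\top}(\mat{S} \otimes \mat{S}) B_d,
\]
so the bias is driven by the second moments of $\mat{W}_{\mat{S}} - \mat{S}$, which are of order $b$. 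The natural approach is therefore a second-order Taylor expansion of $f$ around $\mat{S}$, viewing $f$ as a function on $\R^{d(d+1)/2}$ via the $\vecp$ operator.

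Concretely, since $\mathcal{S}_{++}^{\hspace{0.3mm}d}$ is open and convex (Remark~\ref{rem:open.and.convex}) and $\mat{W}_{\mat{S}} \in \mathcal{S}_{++}^{\hspace{0.3mm}d}$ with probability one, the segment from $\mat{S}$ to $\mat{W}_{\mat{S}}$ remains inside $\mathcal{S}_{++}^{\hspace{0.3mm}d}$, so Taylor's theorem with Lagrange remainder at some point $\bb{\xi}$ on that segment applies. Taking expectations, the constant term gives $f(\mat{S})$, the first-order term vanishes by $\EE[\mat{W}_{\mat{S}} - \mat{S}] = \mat{0}$, and the quadratic term evaluated at $\mat{S}$ (rather than at $\bb{\xi}$) produces
\[
\frac{1}{2} \sum_{\bb{i}, \bb{j}} \frac{\partial^2 f(\mat{S})}{\partial \mat{S}_{\bb{i}} \partial \mat{S}_{\bb{j}}} \, [\VV(\vecp(\mat{W}_{\mat{S}}))]_{(\bb{i},\bb{j})} = b \cdot g(\mat{S})
\]
by the covariance formula above and the definition \eqref{def:g}.

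The main obstacle is controlling the remainder $R \leqdef \frac{1}{2}(\vecp(\mat{W}_{\mat{S}}) - \vecp(\mat{S}))^{\top} [\nabla^2 f(\bb{\xi}) - \nabla^2 f(\mat{S})] (\vecp(\mat{W}_{\mat{S}}) - \vecp(\mat{S}))$ and showing that $\EE[R] = \oo_{d,\mat{S}}(b)$. I would split on the event $A_\delta \leqdef \{\|\mat{W}_{\mat{S}} - \mat{S}\| \leq \delta\}$ for some small $\delta > 0$. On $A_\delta$, uniform continuity of the second-order partials from assumption \eqref{eq:assump:f.density.2} provides a modulus $\omega(\delta) \to 0$ as $\delta \to 0$, so that $\EE[|R| \, \ind_{A_\delta}] \leq \omega(\delta) \cdot \EE\|\mat{W}_{\mat{S}} - \mat{S}\|^2 = \omega(\delta) \cdot \OO_{d,\mat{S}}(b)$. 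On $A_\delta^c$, I bound $\|\nabla^2 f(\bb{\xi}) - \nabla^2 f(\mat{S})\|$ by $2 \|\nabla^2 f\|_\infty$ and use Markov's inequality with the fourth-moment bound $\EE\|\mat{W}_{\mat{S}} - \mat{S}\|^4 = \OO_{d,\mat{S}}(b^2)$ (immediate from the explicit polynomial expressions for Wishart moments) to obtain $\EE[|R| \, \ind_{A_\delta^c}] \leq \|\nabla^2 f\|_\infty \cdot \EE\|\mat{W}_{\mat{S}} - \mat{S}\|^4 / \delta^2 = \OO_{d,\mat{S},\delta}(b^2)$. Choosing $\delta = \delta(b) \to 0$ slowly enough (e.g.\ $\delta = b^{1/4}$) yields $\EE[R] = \oo_{d,\mat{S}}(b)$, which combined with the quadratic computation gives \eqref{eq:thm:bias.var.density.eq.bias}.
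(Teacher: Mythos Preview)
Your proposal is correct and follows essentially the same approach as the paper: both use the representation $f_b(\mat{S})=\EE[f(\mat{W}_{\mat{S}})]$, apply a second-order Taylor expansion on the convex open set $\mathcal{S}_{++}^{\hspace{0.3mm}d}$, let the first-order term vanish by $\EE[\mat{W}_{\mat{S}}]=\mat{S}$, identify the quadratic term with $b\,g(\mat{S})$ via \eqref{eq:covariance.explicit.estimate}, and control the remainder by splitting on the event that $\mat{W}_{\mat{S}}$ is close to $\mat{S}$ and invoking the uniform continuity of the Hessian on the near part. The only noteworthy difference is the treatment of the far event: the paper bounds $\PP(|\mat{W}_{\bb{k}}-\mat{S}_{\bb{k}}|\ge \delta/d^{2})$ by exponential concentration of the chi-square and variance-gamma marginals, whereas you use the fourth-moment bound $\EE\|\mat{W}_{\mat{S}}-\mat{S}\|^{4}=\OO_{d,\mat{S}}(b^{2})$ together with $\|\mat{W}_{\mat{S}}-\mat{S}\|^{2}\le \delta^{-2}\|\mat{W}_{\mat{S}}-\mat{S}\|^{4}$ on $A_{\delta}^{c}$; your route is slightly more elementary and avoids any tail estimates, at the modest cost of requiring knowledge of fourth Wishart moments.
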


        \begin{theorem}[Pointwise variance near and away from the boundary of $\mathcal{S}_{++}^{\hspace{0.3mm}d}$]\label{thm:variance.near.boundary}
            Assume that \eqref{eq:assump:f.density} holds.
            Furthermore, let $\mat{K}\in \mathcal{S}_{++}^{\hspace{0.3mm}d}$ be independent of $b$ and assume that it diagonalizes as $\mat{K} = V \, \mathrm{diag}(\bb{\lambda}(\mat{K})) \, V^{\top}$.
            Pick any subset $\emptyset \subseteq \mathcal{J}\subseteq [d]$ and assume that
            \vspace{-1mm}
            \begin{equation}
                \mat{S} = V \, \mathrm{diag}(\bb{\lambda}_b(\mat{K})) \, V^{\top}, \quad \text{where the vector $\bb{\lambda}_b(\mat{K})$ satisfies} \quad [\bb{\lambda}_b(\mat{K})]_i \leqdef
                \begin{cases}
                    \lambda_i(\mat{K}), &\mbox{if } i\in [d]\backslash \mathcal{J}, \\
                    b \lambda_i(\mat{K}), &\mbox{if } i\in \mathcal{J}.
                \end{cases}
            \end{equation}
            In particular, with this choice of $\mat{S}$, note that $|\mat{S}| = b^{|\mathcal{J}|} |\mat{K}|$.
            Then we have, as $n\to \infty$,
            \begin{equation}\label{eq:thm:bias.var.density.eq.var}
                \VV(\hat{f}_{n,b}(\mat{S})) = n^{-1} b^{-r(d) / 2 - |\mathcal{J}| \frac{d + 1}{2}} \cdot \psi(\mat{K}) \, (f(\mat{S}) + \OO_{d,\mat{S}}(b^{1/2})) + \OO_{d,\mat{S}}(n^{-1}),
            \end{equation}
            where
            \begin{equation}\label{eq:def.psi}
                r(d) \leqdef \frac{d(d + 1)}{2} \qquad \text{and} \qquad \psi(\mat{K}) \leqdef \frac{|\sqrt{\pi} \, \mat{K}|^{-\frac{d + 1}{2}}}{2^{d(d + 2)/2}}.
            \end{equation}
        \end{theorem}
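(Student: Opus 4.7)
The plan is to recycle the computation leading to Proposition~\ref{prop:pointwise.variance}, carefully substituting the boundary factorization $|\mat{S}|=b^{|\mathcal{J}|}|\mat{K}|$ into the determinant that governs the leading order. Start from the decomposition
\begin{equation*}
\VV(\hat{f}_{n,b}(\mat{S})) = n^{-1}\,\EE\!\left[\left(K_{1/b, b\mat{S}}(\mat{X}_1)\right)^2\right] - n^{-1}\left(\EE[K_{1/b, b\mat{S}}(\mat{X}_1)]\right)^2.
\end{equation*}
The second term is $\OO_{d,\mat{S}}(n^{-1})$ since $\EE[K_{1/b, b\mat{S}}(\mat{X}_1)]=\int K_{1/b, b\mat{S}}\,f\le \|f\|_\infty$ (the kernel is a probability density).

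For the first term, I would apply Theorem~\ref{thm:p.k.expansion} (specifically \eqref{eq:LLT.order.2}) to replace $K_{1/b,b\mat{S}}$ by $g_{1/b,b\mat{S}}$ at the cost of a multiplicative error $1+\OO_d(b^{1/2})$ on the bulk $B_{1/b,b\mat{S}}(2^{-1/2}b^{1/6})$, and use standard exponential tail bounds (both for $K^2$ and $g^2$ in the $\Delta_{1/b,b\mat{S}}$ coordinates) to argue that the contribution from outside the bulk is negligible against $\|f\|_\infty$. I would then invoke the Gaussian product identity
\begin{equation*}
g_{1/b,b\mat{S}}(\mat{X})^2 = \frac{1}{\sqrt{(4\pi)^{d(d+1)/2}\,|B_d^{\top}(\sqrt{2b}\,\mat{S}\otimes\sqrt{2b}\,\mat{S})B_d|}}\;g_{1/b,(b/\sqrt{2})\mat{S}}(\mat{X}),
\end{equation*}
which follows because squaring a symmetric matrix-variate normal density yields another such density with covariance halved, times an explicit constant. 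The algebraic simplification $|B_d^{\top}(\sqrt{2b}\mat{S}\otimes\sqrt{2b}\mat{S})B_d| = 2^{-d(d-1)/2}(2b)^{d(d+1)/2}|\mat{S}|^{d+1}$ is exactly the identity already used to pass from \eqref{eq:sym.matrix.normal.density} to its right-hand side. Substituting $|\mat{S}|^{d+1}=b^{|\mathcal{J}|(d+1)}|\mat{K}|^{d+1}$ then collects the extra $b^{-|\mathcal{J}|(d+1)/2}$ factor and leaves exactly $\psi(\mat{K})$ behind.

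I would close by localizing the remaining integral $\int g_{1/b,(b/\sqrt{2})\mat{S}}(\mat{X})\,f(\mat{X})\,d\mat{X}$ at $\mat{S}$. The density concentrates at $\mat{S}$ on a scale shrinking like $\sqrt{b}$ in the non-boundary directions (and even faster in the $\mathcal{J}$-directions, since its covariance also shrinks with $\mat{S}$), so the Lipschitz hypothesis on $f$ gives
\begin{equation*}
\int_{\mathcal{S}_{++}^{\hspace{0.3mm}d}} g_{1/b,(b/\sqrt{2})\mat{S}}(\mat{X})\,f(\mat{X})\,d\mat{X} = f(\mat{S}) + \OO_{d,\mat{S}}(b^{1/2}),
\end{equation*}
the exponentially small leakage outside $\mathcal{S}_{++}^{\hspace{0.3mm}d}$ being harmless. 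Multiplying by the pre-factor and adding the $\OO_{d,\mat{S}}(n^{-1})$ from the square of the expectation yields the claimed formula.

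The main obstacle, and the only new feature compared to Proposition~\ref{prop:pointwise.variance}, is ensuring that the Theorem~\ref{thm:p.k.expansion} expansion and the tail discard stay uniform as some eigenvalues of $\mat{S}$ degenerate at rate $b$. The saving grace is that $\Delta_{1/b,b\mat{S}}=(\sqrt{2b}\,\mat{S})^{-1/2}(\mat{X}-\mat{S})(\sqrt{2b}\,\mat{S})^{-1/2}$ is defined in terms of $\mat{S}$ itself, so the boundary scaling is absorbed into the natural Wishart coordinate system and the expansion proceeds verbatim; verifying that this absorption really decouples bulk from tail uniformly in $|\mathcal{J}|$ is the only piece of bookkeeping requiring genuine care.
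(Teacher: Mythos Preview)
Your approach is correct but takes a different route from the paper's own proof. You extend the informal argument leading to Proposition~\ref{prop:pointwise.variance} (Theorem~\ref{thm:p.k.expansion} to replace $K$ by $g$, then the Gaussian product identity $g^2=\text{const}\cdot g_{\text{half-covariance}}$). The paper's proof of this theorem, by contrast, bypasses the normal approximation entirely: it uses the exact \emph{Wishart} product identity
\[
K_{1/b,\,b\mat{S}}(\mat{X})^2 \;=\; A_b(\mat{S})\cdot K_{2/b-(d+1),\, b\mat{S}/2}(\mat{X}),
\]
where $A_b(\mat{S})$ is an explicit ratio of gamma functions. This gives $\EE[K_{1/b,b\mat{S}}(\mat{X})^2]=A_b(\mat{S})\,\EE[f(\widetilde{\mat{W}}_{\mat{S}})]$ \emph{exactly}, with $\widetilde{\mat{W}}_{\mat{S}}\sim\mathrm{Wishart}_d(2/b-(d+1),b\mat{S}/2)$; the only two approximations left are the Lipschitz localization $\EE[f(\widetilde{\mat{W}}_{\mat{S}})]=f(\mat{S})+\OO_{d,\mat{S}}(b^{1/2})$ via Cauchy--Schwarz on the Wishart covariances, and the Stirling expansion of $A_b(\mat{S})$, after which one substitutes $|\mat{S}|=b^{|\mathcal{J}|}|\mat{K}|$.

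What each buys: the paper's route eliminates precisely the obstacle you flag. Because the Wishart product identity is exact, there is no bulk/tail split and no need to track uniformity in $\mat{S}$ of any remainder coming from Theorem~\ref{thm:p.k.expansion}; the entire $\mat{S}$-dependence sits in the explicit $A_b(\mat{S})$. Your route also works---your observation that in $\Delta_{1/b,b\mat{S}}$-coordinates both the ratio $K/g$ and the tail probabilities depend only on $d$ and $\nu=1/b$, not on $\mat{S}$, is the right reason---and it has the merit of displaying the theorem as a direct corollary of the paper's main expansion. But the Wishart product identity gives a shorter, cleaner argument with fewer error terms to marshal.
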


        The above theorem means that the pointwise variance is $\OO_{d,\mat{S}}(n^{-1} b^{-r(d) / 2})$ away from the boundary of $\mathcal{S}_{++}^{\hspace{0.3mm}d}$ and it gets multiplied by a factor $b^{-(d + 1)/2}$ everytime one of the $d$ eigenvalues approaches zero at a linear rate with respect to $b$. If $|\mathcal{J}|$ eigenvalues approach zero as $b\to 0$, then the pointwise variance is $\OO_{d,\mat{S}}(n^{-1} b^{-r(d) / 2 - |\mathcal{J}| \frac{d + 1}{2}})$.

        By combining Theorem~\ref{thm:bias} and Proposition~\ref{prop:pointwise.variance} (equivalently, Theorem~\ref{thm:variance.near.boundary} for $\mathcal{J} = \emptyset$), we can compute the mean squared error of our estimator and optimize the choice of the bandwidth parameter $b$.

        \begin{corollary}[Mean squared error]\label{cor:bias.var.implies.MSE.density}
            Assume that \eqref{eq:assump:f.density.2} holds.
            Then, as $n\to \infty$ and $b = b(n)\to 0$, and for any given $\mat{S}\in \mathcal{S}_{++}^{\hspace{0.3mm}d}$, we have
            \vspace{-1mm}
            \begin{equation}
                \begin{aligned}
                    \mathrm{MSE}[\hat{f}_{n,b}(\mat{S})]
                    &\leqdef \EE\left[\left|\hat{f}_{n,b}(\mat{S}) - f(\mat{S})\right|^2\right] = \VV(\hat{f}_{n,b}(\mat{S})) + \left(\BB[\hat{f}_{n,b}(\mat{S})]\right)^2 \\
                    &= n^{-1} b^{-r(d) / 2} \cdot \psi(\mat{S}) f(\mat{S}) + b^2 g^2(\mat{S}) + \OO_{d,\mat{S}}(n^{-1} b^{-r(d) / 2 + 1/2}) + \oo_{d,\mat{S}}(b^2).
                \end{aligned}
            \end{equation}
            In particular, if $f(\mat{S}) \cdot g(\mat{S}) \neq 0$, the asymptotically optimal choice of $b$, with respect to $\mathrm{MSE}$, is
            \begin{equation}\label{eq:b.opt.MSE}
                b_{\mathrm{opt}}(\mat{S}) = n^{-2/(r(d) + 4)} \left[\frac{r(d)}{4} \cdot \frac{\psi(\mat{S}) f(\mat{S})}{g^2(\mat{S})}\right]^{2/(r(d) + 4)},
            \end{equation}
            with
            \begin{equation}
                \mathrm{MSE}[\hat{f}_{n,b_{\mathrm{opt}}(\mat{S})}(\mat{S})] = n^{-4/(r(d) + 4)} \left[\frac{1 + \frac{r(d)}{4}}{\left(\frac{r(d)}{4}\right)^{\frac{r(d)}{r(d) + 4}}}\right] \frac{\left(\psi(\mat{S}) f(\mat{S})\right)^{4 / (r(d) + 4)}}{\left(g^2(\mat{S})\right)^{-r(d) / (r(d) + 4)}} + \oo_{d,\mat{S}}(n^{-4/(r(d) + 4)}), \quad n\to \infty.
            \end{equation}
            More generally, if $n^{2/(r(d) + 4)} \, b\to \lambda$ as $n\to \infty$ and $b = b(n)\to 0$ for some $\lambda > 0$, then
            \begin{equation}
                \mathrm{MSE}[\hat{f}_{n,b}(\mat{S})] = n^{-4/(r(d) + 4)} \left[\lambda^{-r(d)/2} \psi(\mat{S}) f(\mat{S}) + \lambda^2 g^2(\mat{S})\right] + \oo_{d,\mat{S}}(n^{-4/(r(d) + 4)}).
            \end{equation}
        \end{corollary}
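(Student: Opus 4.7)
The plan is to exploit the standard bias-variance decomposition
\begin{equation*}
\mathrm{MSE}[\hat{f}_{n,b}(\mat{S})] = \VV(\hat{f}_{n,b}(\mat{S})) + \bigl(\BB[\hat{f}_{n,b}(\mat{S})]\bigr)^2,
\end{equation*}
so that the first displayed identity for $\mathrm{MSE}$ reduces to a direct substitution of Theorem~\ref{thm:bias} and Proposition~\ref{prop:pointwise.variance} (equivalently, Theorem~\ref{thm:variance.near.boundary} with $\mathcal{J}=\emptyset$). From Theorem~\ref{thm:bias}, $\BB[\hat{f}_{n,b}(\mat{S})] = b\,g(\mat{S}) + \oo_{d,\mat{S}}(b)$, whose square is $b^2 g^2(\mat{S}) + \oo_{d,\mat{S}}(b^2)$. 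From Proposition~\ref{prop:pointwise.variance}, identifying $d(d+1)/4 = r(d)/2$ and $2^{-d(d+2)/2}|\sqrt{\pi}\,\mat{S}|^{-(d+1)/2} = \psi(\mat{S})$, we get
\begin{equation*}
\VV(\hat{f}_{n,b}(\mat{S})) = n^{-1} b^{-r(d)/2}\,\psi(\mat{S}) f(\mat{S}) + \OO_{d,\mat{S}}(n^{-1}b^{-r(d)/2+1/2}),
\end{equation*}
where the additive $\OO_{d,\mat{S}}(n^{-1})$ term coming from $-(\EE K_{1/b,b\mat{S}})^2/n$ is absorbed since it is dominated by $n^{-1} b^{-r(d)/2+1/2}$ as $b \to 0$. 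Summing the two contributions yields the displayed asymptotic expansion for $\mathrm{MSE}$.

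For the optimal bandwidth, assuming $f(\mat{S})\cdot g(\mat{S})\neq 0$, I consider the leading deterministic profile
\begin{equation*}
\varphi(b) \leqdef A b^{-r(d)/2} + B b^2, \qquad A \leqdef n^{-1}\psi(\mat{S}) f(\mat{S}), \quad B \leqdef g^2(\mat{S}),
\end{equation*}
and differentiate. The equation $\varphi'(b) = -\tfrac{r(d)}{2} A b^{-r(d)/2-1} + 2 B b = 0$ gives $b^{r(d)/2+2} = \tfrac{r(d)}{4}\cdot A/B$, i.e.,
$b_{\mathrm{opt}}(\mat{S}) = n^{-2/(r(d)+4)}\bigl[\tfrac{r(d)}{4}\cdot\psi(\mat{S}) f(\mat{S})/g^2(\mat{S})\bigr]^{2/(r(d)+4)}$, matching \eqref{eq:b.opt.MSE}. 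At $b = b_{\mathrm{opt}}$ we have $A b^{-r(d)/2} = \tfrac{r(d)}{4}\, B b^2$, so $\varphi(b_{\mathrm{opt}}) = (1 + r(d)/4)\, B b^2_{\mathrm{opt}}$, and inserting the explicit value of $b_{\mathrm{opt}}$ and regrouping powers of $A,B$ gives the stated closed form, whose $n$-dependence is $n^{-4/(r(d)+4)}$; the residual $\OO_{d,\mat{S}}(n^{-1}b^{-r(d)/2+1/2}) + \oo_{d,\mat{S}}(b^2)$ evaluated at $b_{\mathrm{opt}}$ contributes $\oo_{d,\mat{S}}(n^{-4/(r(d)+4)})$ since $b_{\mathrm{opt}}^{1/2} = \oo(1)$.

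The final assertion under the general scaling $n^{2/(r(d)+4)} b \to \lambda > 0$ follows from the same decomposition: the variance contribution satisfies $n^{-1}b^{-r(d)/2} \sim \lambda^{-r(d)/2} n^{-4/(r(d)+4)}$ and the bias contribution satisfies $b^2 \sim \lambda^2 n^{-4/(r(d)+4)}$, both at the same rate, while the remainders $\OO_{d,\mat{S}}(n^{-1} b^{-r(d)/2+1/2})$ and $\oo_{d,\mat{S}}(b^2)$ are both $\oo_{d,\mat{S}}(n^{-4/(r(d)+4)})$. There is no real obstacle here: the entire corollary is a mechanical consequence of the preceding bias and variance results plus elementary single-variable calculus. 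The only bookkeeping care is to verify that all cross-terms and lower-order remainders assemble into a single $\oo_{d,\mat{S}}$ error at the proper rate and that the constant $(1 + r(d)/4)/(r(d)/4)^{r(d)/(r(d)+4)}$ emerges cleanly from the substitution at $b_{\mathrm{opt}}$.
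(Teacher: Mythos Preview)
Your approach is correct and is exactly what the paper intends: the corollary is stated without a separate proof in the paper, being announced as an immediate consequence of combining Theorem~\ref{thm:bias} and Proposition~\ref{prop:pointwise.variance} (equivalently Theorem~\ref{thm:variance.near.boundary} with $\mathcal{J}=\emptyset$) via the bias--variance decomposition, followed by the standard one-variable minimization of the leading profile $b\mapsto A b^{-r(d)/2}+Bb^2$.

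One small algebraic slip to fix: at the optimum the first-order condition $b_{\mathrm{opt}}^{\,r(d)/2+2}=\tfrac{r(d)}{4}\,A/B$ gives $B\,b_{\mathrm{opt}}^2=\tfrac{r(d)}{4}\,A\,b_{\mathrm{opt}}^{-r(d)/2}$, not the reverse relation you wrote. Consequently $\varphi(b_{\mathrm{opt}})=(1+\tfrac{r(d)}{4})\,A\,b_{\mathrm{opt}}^{-r(d)/2}$ (equivalently $(1+\tfrac{4}{r(d)})\,B\,b_{\mathrm{opt}}^2$), and it is this expression that, after substituting $b_{\mathrm{opt}}$, produces the constant $\bigl(1+\tfrac{r(d)}{4}\bigr)\big/\bigl(\tfrac{r(d)}{4}\bigr)^{r(d)/(r(d)+4)}$ in the statement. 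With that correction your argument is complete.
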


        By integrating the MSE on the following subset of $\mathcal{S}_{++}^{\hspace{0.3mm}d}$,
        \begin{equation}
            \mathcal{S}_{++}^{\hspace{0.3mm}d}(\delta) \leqdef \left\{\mat{M}\in \mathcal{S}_{++}^{\hspace{0.3mm}d} : \delta \leq \lambda_1(\mat{M}) \leq \dots \leq \lambda_d(\mat{M}) \leq \delta^{-1}\right\}, \qquad 0 < \delta < 1,
        \end{equation}
        we obtain the next result.

        \begin{theorem}[Mean integrated squared error on $\mathcal{S}_{++}^{\hspace{0.3mm}d}(\delta)$]\label{thm:MISE.optimal.density}
            Assume that \eqref{eq:assump:f.density.2} holds.
            For a given $\delta\in (0,1)$, assume also that
            \begin{equation}\label{eq:further.assumptions.MISE}
                \int_{\mathcal{S}_{++}^{\hspace{0.3mm}d}(\delta)} \psi(\mat{S}) f(\mat{S}) \rd \mat{S} < \infty, \qquad \int_{\mathcal{S}_{++}^{\hspace{0.3mm}d}(\delta)} g^2(\mat{S}) \rd \mat{S} < \infty,
            \end{equation}
            where recall $\psi$ and $g$ were defined in \eqref{eq:def.psi} and \eqref{def:g}, respectively.
            Then, as $n\to \infty$ and $b = b(n)\to 0$, we have
            \begin{equation}\label{def:MISE.density}
                \begin{aligned}
                    \mathrm{MISE}_{\delta}[\hat{f}_{n,b}]
                    &\leqdef \int_{\mathcal{S}_{++}^{\hspace{0.3mm}d}(\delta)} \EE\left[\left|\hat{f}_{n,b}(\mat{S}) - f(\mat{S})\right|^2\right] \rd \mat{S} \\
                    &= n^{-1} b^{-r(d)/2} \int_{\mathcal{S}_{++}^{\hspace{0.3mm}d}(\delta)} \psi(\mat{S}) f(\mat{S}) \rd \mat{S} + b^2 \int_{\mathcal{S}_{++}^{\hspace{0.3mm}d}(\delta)} g^2(\mat{S}) \rd \mat{S} + \oo_{d,\delta}(n^{-1} b^{-r(d)/2}) + \oo_{d,\delta}(b^2).
                \end{aligned}
            \end{equation}
            In particular, if $\int_{\mathcal{S}_{++}^{\hspace{0.3mm}d}(\delta)} g^2(\mat{S}) \rd \mat{S} > 0$, the asymptotically optimal choice of $b$, with respect to $\mathrm{MISE}_{\delta}$, is
            \begin{equation}\label{eq:b.opt.MISE}
                b_{\mathrm{opt}} = n^{-2/(r(d) + 4)} \left[\frac{r(d)}{4} \cdot \frac{\int_{\mathcal{S}_{++}^{\hspace{0.3mm}d}(\delta)} \psi(\mat{S}) f(\mat{S}) \rd \mat{S}}{\int_{\mathcal{S}_{++}^{\hspace{0.3mm}d}(\delta)} g^2(\mat{S}) \rd \mat{S}}\right]^{2/(r(d) + 4)},
            \end{equation}
            with
            \begin{equation}
                \begin{aligned}
                    \mathrm{MISE}_{\delta}[\hat{f}_{n,b_{\mathrm{opt}}}]
                    &= n^{-4 / (r(d) + 4)} \left[\frac{1 + \frac{r(d)}{4}}{\left(\frac{r(d)}{4}\right)^{\frac{r(d)}{r(d) + 4}}}\right] \frac{\left(\int_{\mathcal{S}_{++}^{\hspace{0.3mm}d}(\delta)} \psi(\mat{S}) f(\mat{S}) \rd \mat{S}\right)^{4 / (r(d) + 4)}}{\left(\int_{\mathcal{S}_{++}^{\hspace{0.3mm}d}(\delta)} g^2(\mat{S}) \rd \mat{S}\right)^{-r(d) / (r(d) + 4)}} + \oo_{d,\delta}(n^{-4/(r(d) + 4)}), \quad n\to \infty.
                \end{aligned}
            \end{equation}
            More generally, if $n^{2/(r(d) + 4)} \, b\to \lambda$ as $n\to \infty$ and $b = b(n)\to 0$ for some $\lambda > 0$, then
            \begin{equation}
                \begin{aligned}
                    \mathrm{MISE}_{\delta}[\hat{f}_{n,b}]
                    &= n^{-4 / (r(d) + 4)} \left[\lambda^{-r(d)/2} \int_{\mathcal{S}_{++}^{\hspace{0.3mm}d}(\delta)} \psi(\mat{S}) f(\mat{S}) \rd \mat{S} + \lambda^2 \int_{\mathcal{S}_{++}^{\hspace{0.3mm}d}(\delta)} g^2(\mat{S}) \rd \mat{S}\right] + \oo_{d,\delta}(n^{-4/(r(d) + 4)}).
                \end{aligned}
            \end{equation}
        \end{theorem}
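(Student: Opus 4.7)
The plan is to derive \eqref{def:MISE.density} by writing the pointwise mean squared error as bias squared plus variance, integrating each piece over $\mathcal{S}_{++}^{\hspace{0.3mm}d}(\delta)$, and then performing the standard one-variable calculus optimization in $b$.

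First, for a fixed $\mat{S}\in \mathcal{S}_{++}^{\hspace{0.3mm}d}(\delta)$, I would apply Theorem~\ref{thm:bias} to expand the squared bias as
\begin{equation*}
    \left(\BB[\hat{f}_{n,b}(\mat{S})]\right)^2 = b^2 \, g^2(\mat{S}) + \oo_{d,\mat{S}}(b^2),
\end{equation*}
and Proposition~\ref{prop:pointwise.variance} (equivalently, Theorem~\ref{thm:variance.near.boundary} with $\mathcal{J} = \emptyset$, which applies here since on $\mathcal{S}_{++}^{\hspace{0.3mm}d}(\delta)$ the eigenvalues of $\mat{S}$ are bounded away from $0$) to expand the variance as
\begin{equation*}
    \VV(\hat{f}_{n,b}(\mat{S})) = n^{-1} b^{-r(d)/2} \, \psi(\mat{S}) f(\mat{S}) + \OO_{d,\mat{S}}(n^{-1} b^{-r(d)/2 + 1/2}) + \OO_{d,\mat{S}}(n^{-1}).
\end{equation*}
Adding these gives the pointwise MSE; integrating term by term over $\mathcal{S}_{++}^{\hspace{0.3mm}d}(\delta)$ and using the integrability assumption \eqref{eq:further.assumptions.MISE} yields the two leading integrals in \eqref{def:MISE.density}.

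The main subtlety, and what I expect to be the central obstacle, is upgrading the pointwise little-$\oo$ and big-$\OO$ remainders (which a priori depend on $\mat{S}$) into uniform $\oo_{d,\delta}(\cdot)$ remainders on $\mathcal{S}_{++}^{\hspace{0.3mm}d}(\delta)$, so that integration of the error terms preserves their order. The region $\mathcal{S}_{++}^{\hspace{0.3mm}d}(\delta)$ is compact (closed and bounded in $\mathcal{S}^{\hspace{0.3mm}d}\simeq \R^{d(d+1)/2}$, with eigenvalues sandwiched in $[\delta,\delta^{-1}]$), so I would retrace the proofs of Theorem~\ref{thm:bias} and Proposition~\ref{prop:pointwise.variance} and verify that all constants implicit in the $\OO_{d,\mat{S}}$ terms can be replaced by constants depending only on $d$ and $\delta$. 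For the bias, this uses the \emph{uniform} continuity of the second-order partial derivatives of $f$ on $\mathcal{S}_{++}^{\hspace{0.3mm}d}$ in assumption \eqref{eq:assump:f.density.2}, combined with a Taylor expansion of $f$ around $\mat{S}$ inside the representation $f_b(\mat{S}) = \EE[f(\mat{W}_{\mat{S}})]$ from \eqref{eq:f.b.star.representation.2}. For the variance, it uses the boundedness of $\psi(\mat{S})$ and $f(\mat{S})$ on $\mathcal{S}_{++}^{\hspace{0.3mm}d}(\delta)$ together with the uniformity (in the bulk $B_{\nu,\mat{S}}(\eta)$) already built into Theorem~\ref{thm:p.k.expansion}. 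Once uniformity is established, the dominated-convergence-type argument $\int (n^{-1} b^{-r(d)/2+1/2}\,\psi f + \oo(b^2))\, \rd\mat{S} = \oo(n^{-1} b^{-r(d)/2}) + \oo(b^2)$ on the compact region is immediate.

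Finally, the optimization is elementary calculus. Writing $\mathrm{MISE}_{\delta}[\hat{f}_{n,b}] = A n^{-1} b^{-r(d)/2} + B b^2 + \text{(lower order)}$ with $A \leqdef \int \psi f\,\rd\mat{S}$ and $B \leqdef \int g^2\,\rd\mat{S}$, setting $\frac{\rd}{\rd b}\big(A n^{-1} b^{-r(d)/2} + B b^2\big) = 0$ gives $b^{r(d)/2 + 2} = \frac{A\,r(d)}{4 B n}$, hence the expression \eqref{eq:b.opt.MISE} for $b_{\mathrm{opt}}$. Substituting this value back yields the stated minimum $\mathrm{MISE}_{\delta}$, and the general statement for $n^{2/(r(d)+4)} b \to \lambda$ follows by plugging $b = \lambda n^{-2/(r(d)+4)} (1+\oo(1))$ into the leading expression. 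The only point to be careful about is that the lower-order $\oo_{d,\delta}(\cdot)$ terms remain negligible at $b = b_{\mathrm{opt}}$, which holds because $b_{\mathrm{opt}} \to 0$ and $n^{-1} b_{\mathrm{opt}}^{-r(d)/2} = \OO(n^{-4/(r(d)+4)}) = b_{\mathrm{opt}}^{2}$ have the same order.
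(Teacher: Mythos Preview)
Your proposal is correct and follows essentially the same route as the paper: decompose the MSE into bias squared plus variance via Theorem~\ref{thm:bias} and Theorem~\ref{thm:variance.near.boundary} with $\mathcal{J}=\emptyset$, integrate over $\mathcal{S}_{++}^{\hspace{0.3mm}d}(\delta)$ using \eqref{eq:further.assumptions.MISE}, and then do the one-variable optimization in $b$. The only cosmetic difference is that the paper dispatches the passage from pointwise $\oo_{d,\mat{S}}(\cdot)$ remainders to integrated $\oo_{d,\delta}(\cdot)$ remainders by a one-line appeal to the dominated convergence theorem, whereas you argue more explicitly via compactness of $\mathcal{S}_{++}^{\hspace{0.3mm}d}(\delta)$ and uniformity of the constants; both justifications are equivalent here and your version is, if anything, more transparent about where the uniformity comes from.
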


        A straightforward verification of the Lindeberg condition for double arrays yields the asymptotic normality.

        \begin{theorem}[Asymptotic normality]\label{thm:Theorem.3.2.and.3.3.Babu.Canty.Chaubey}
            Assume that \eqref{eq:assump:f.density} holds.
            Let $\mat{S}\in \mathcal{S}_{++}^{\hspace{0.3mm}d}$ be such that $f(\mat{S}) > 0$.
            If $n^{1/2} b^{r(d)/4}\to \infty$ as $n\to \infty$ and $b = b(n)\to 0$, then
            \begin{equation}\label{eq:thm:Theorem.3.2.and.3.3.Babu.Canty.Chaubey.Prop.1}
                n^{1/2} b^{r(d)/4} (\hat{f}_{n,b}(\mat{S}) - f_b(\mat{S})) \stackrel{\mathscr{D}}{\longrightarrow} \mathcal{N}(0,\psi(\mat{S}) f(\mat{S})).
            \end{equation}
            If we also have $n^{1/2} b^{r(d)/4 + 1/2}\to 0$ as $n\to \infty$ and $b = b(n)\to 0$, then Theorem~\ref{thm:bias} implies
            \begin{equation}
                n^{1/2} b^{r(d)/4} (\hat{f}_{n,b}(\mat{S}) - f(\mat{S})) \stackrel{\mathscr{D}}{\longrightarrow} \mathcal{N}(0,\psi(\mat{S}) f(\mat{S})).
            \end{equation}
            Independently of the above rates for $n$ and $b$, if we assume \eqref{eq:assump:f.density.2} instead and $n^{2/(r(d) + 4)} \, b \to \lambda$ as $n\to \infty$ and $b = b(n)\to 0$ for some $\lambda > 0$, then Theorem~\ref{thm:bias} implies
            \begin{equation}\label{eq:thm:Theorem.3.2.and.3.3.Babu.Canty.Chaubey.Thm.3.3}
                n^{2 / (r(d) + 4)} (\hat{f}_{n,b}(\mat{S}) - f(\mat{S})) \stackrel{\mathscr{D}}{\longrightarrow} \mathcal{N}(\lambda \, g(\mat{S}), \lambda^{-r(d)/2} \psi(\mat{S}) f(\mat{S})).
            \end{equation}
        \end{theorem}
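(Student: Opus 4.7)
The plan is to write $\hat{f}_{n,b}(\mat{S}) - f_b(\mat{S}) = n^{-1} \sum_{i=1}^n Z_{n,i}$, where
\begin{equation}
    Z_{n,i} \leqdef K_{1/b, b \hspace{0.3mm} \mat{S}}(\mat{X}_i) - f_b(\mat{S}),
\end{equation}
and apply the Lyapunov CLT (a sufficient form of the Lindeberg condition) to the triangular array $Y_{n,i} \leqdef n^{-1/2} b^{r(d)/4} Z_{n,i}$. The variance of the sum $\sum_i Y_{n,i}$ equals $b^{r(d)/2} \VV(K_{1/b, b \hspace{0.3mm} \mat{S}}(\mat{X}_1))$, which converges to $\psi(\mat{S}) f(\mat{S})$ by Proposition~\ref{prop:pointwise.variance}. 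To verify Lyapunov's condition at order $3$, I will bound $\mathbb{E}[|Z_{n,1}|^3]$ by repeating the argument in \eqref{eq:histogram.estimator.var.asymp.next} with the square replaced by the cube: by Theorem~\ref{thm:p.k.expansion}, the integral of $K_{1/b,b\mat{S}}^3 f$ over the bulk $B_{1/b, b \hspace{0.3mm}\mat{S}}(\eta)$ is asymptotic to the integral of $g_{1/b, b \hspace{0.3mm}\mat{S}}^3 f$, and a change of variables reducing the latter to the integral of a rescaled normal density times $f(\mat{S}) + \OO_{d,\mat{S}}(b^{1/2})$ yields $\mathbb{E}[K_{1/b, b \hspace{0.3mm}\mat{S}}^3(\mat{X}_1)] = \OO_{d,\mat{S}}(b^{-r(d)})$.

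With this third-moment bound in hand, the Lyapunov ratio becomes
\begin{equation}
    \frac{\sum_{i=1}^n \mathbb{E}[|Y_{n,i}|^3]}{\bigl(\sum_{i=1}^n \VV(Y_{n,i})\bigr)^{3/2}} = \OO_{d,\mat{S}}\!\left(\frac{n \cdot n^{-3/2} b^{3 r(d)/4} \cdot b^{-r(d)}}{1}\right) = \OO_{d,\mat{S}}\!\left(\frac{1}{n^{1/2} b^{r(d)/4}}\right),
\end{equation}
which tends to zero under the assumption $n^{1/2} b^{r(d)/4} \to \infty$. The Lyapunov CLT then gives \eqref{eq:thm:Theorem.3.2.and.3.3.Babu.Canty.Chaubey.Prop.1}. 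The contribution to the bulk complement is handled by noting that the Wishart density decays exponentially outside $B_{1/b, b \hspace{0.3mm}\mat{S}}(\eta)$, so the associated moments are negligible compared to any polynomial rate in $b$.

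For the second assertion, I invoke Slutsky's theorem: under assumption \eqref{eq:assump:f.density}, the Lipschitz property of $f$ together with the representation $f_b(\mat{S}) = \mathbb{E}[f(\mat{W}_{\mat{S}})]$ from \eqref{eq:f.b.star.representation.2} and the fact that $\|\mat{W}_{\mat{S}} - \mat{S}\|$ is $\OO_{\mathbb{P}}(b^{1/2})$ (since $\mathbb{E}[\mat{W}_{\mat{S}}] = \mat{S}$ and each entry has variance of order $b$) give $f_b(\mat{S}) - f(\mat{S}) = \OO_{d,\mat{S}}(b^{1/2})$; then $n^{1/2} b^{r(d)/4 + 1/2} \to 0$ makes the corresponding shift vanish. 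For \eqref{eq:thm:Theorem.3.2.and.3.3.Babu.Canty.Chaubey.Thm.3.3}, the hypothesis $n^{2/(r(d)+4)} b \to \lambda$ implies $n^{1/2} b^{r(d)/4} \sim \lambda^{r(d)/4} n^{2/(r(d)+4)}$, so the centered and rescaled quantity converges to $\mathcal{N}(0, \lambda^{-r(d)/2} \psi(\mat{S}) f(\mat{S}))$; then Theorem~\ref{thm:bias} (which applies under \eqref{eq:assump:f.density.2}) yields $n^{2/(r(d)+4)} (f_b(\mat{S}) - f(\mat{S})) \to \lambda \, g(\mat{S})$, and Slutsky concludes.

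The main obstacle is securing the higher-moment control $\mathbb{E}[K_{1/b, b \hspace{0.3mm}\mat{S}}^3(\mat{X}_1)] = \OO_{d,\mat{S}}(b^{-r(d)})$; everything else is a direct assembly from Theorem~\ref{thm:p.k.expansion}, Proposition~\ref{prop:pointwise.variance}, Theorem~\ref{thm:bias}, and Slutsky. The cubic moment must be obtained by carefully mimicking the two-step argument in \eqref{eq:histogram.estimator.var.asymp.next}: first using \eqref{eq:LLT.order.2} to replace $K_{1/b,b \hspace{0.3mm}\mat{S}}$ by $g_{1/b,b \hspace{0.3mm}\mat{S}}$ on the bulk with a vanishing error, and then recognizing that the cube of the SMN density is itself proportional to an SMN density with a rescaled covariance, whose integral is $1 + \oo_{d,\mat{S}}(1)$.
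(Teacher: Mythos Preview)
Your overall strategy is sound and the conclusion is correct, but the route differs from the paper's, and one step of yours is imprecisely justified.

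The paper does not go through a Lyapunov third-moment computation. Instead it verifies the Lindeberg condition directly by bounding the summands uniformly: Lemma~\ref{lem:Wishart.density.bound} gives $\sup_{\mat{X}} K_{1/b,b\mat{S}}(\mat{X}) = \OO_{d,\mat{S}}(b^{-r(d)/2})$, so each $|Z_{n,i}|$ is deterministically $\OO_{d,\mat{S}}(b^{-r(d)/2})$. Combined with $s_b^2 = \VV(Z_{n,1}) \sim b^{-r(d)/2}\psi(\mat{S})f(\mat{S})$, one gets $|Z_{n,1}|/(n^{1/2}s_b) = \OO_{d,\mat{S}}(n^{-1/2}b^{-r(d)/4}) \to 0$, which makes the indicator in the Lindeberg condition identically zero for large $n$. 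This avoids any third-moment calculation and any bulk/complement splitting.

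Your Lyapunov argument also works, but the sentence ``the Wishart density decays exponentially outside $B_{1/b,b\mat{S}}(\eta)$'' is not correct as written: the density $K_{1/b,b\mat{S}}$ does \emph{not} decay exponentially on the complement (it can be as large as its mode there); what is exponentially small is the Wishart \emph{probability} of $B^c$, i.e.\ $\int_{B^c} K\,\rd\mat{X}$. Since you are integrating $K^3 f$, not $K$, you still need a uniform control on $K$ to pass from one to the other, e.g.\ $\int_{B^c} K^3 f \leq \|f\|_\infty \|K\|_\infty^2 \int_{B^c} K$. That uniform control is precisely Lemma~\ref{lem:Wishart.density.bound}. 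Once you invoke it, the simpler route is to bypass the cube entirely via $\EE[|Z_{n,1}|^3] \leq \|K\|_\infty \, \EE[Z_{n,1}^2] = \OO_{d,\mat{S}}(b^{-r(d)})$, which gives your Lyapunov ratio without ever appealing to Theorem~\ref{thm:p.k.expansion} or to a bulk decomposition. Your treatment of the second and third assertions (Lipschitz bias bound under \eqref{eq:assump:f.density}, Theorem~\ref{thm:bias} under \eqref{eq:assump:f.density.2}, and Slutsky) matches the paper's.
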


        \begin{remark}
            {\normalfont
            The rate of convergence for the traditional $d(d + 1)/2$-dimensional kernel density estimator with i.i.d.\ data and bandwidth $h$ is $\OO_d(n^{-1/2} h^{-r(d)/2})$ in Theorem 3.1.15 of \citet{MR0740865}, whereas $\hat{f}_{n,b}$ converges at a rate of $\OO_d(n^{-1/2} b^{-r(d)/4})$.
            Hence, the relation between the bandwidth of $\hat{f}_{n,b}$ and the bandwidth of the traditional multivariate kernel density estimator is $b \approx h^2$.
            }
        \end{remark}

    \subsection{Total variation and other probability metrics upper bounds between the Wishart and SMN distributions}\label{sec:total.variation}

    Our second application of Theorem~\ref{thm:p.k.expansion} is to compute an upper bound on the total variation between the probability measures induced by \eqref{eq:Wishart.density} and \eqref{eq:sym.matrix.normal.density}.
    Given the relation there is between the total variation and other probability metrics such as the Hellinger distance (see, e.g., \citet[p.421]{doi:10.2307/1403865}), we obtain several other upper bounds automatically. For the uninitiated reader, the utility of having total variation or Hellinger distance bounds between two measures is discussed by \citet{Pollard_2005_Asymptopia_chap_3}.

    \begin{theorem}\label{thm:total.variation}
        Let $\nu > d - 1$ and $\mat{S}\in \mathcal{S}_{++}^{\hspace{0.3mm}d}$ be given.
        Let $\QQ_{\nu,\mat{S}}$ be the law of the $\mathrm{SMN}_{d\times d}(\nu \hspace{0.3mm} \mat{S}, B_d^{\top} (\hspace{-0.5mm} \sqrt{2\nu} \, \mat{S} \otimes \hspace{-1mm} \sqrt{2\nu} \, \mat{S}) B_d)$ distribution defined in \eqref{eq:sym.matrix.normal.density}, and let $\PP_{\nu,\mat{S}}$ be the law of the $\mathrm{Wishart}_d(\nu, \mat{S})$ distribution defined in \eqref{eq:Wishart.density}.
        Then, as $\nu\to \infty$, we have
        \begin{equation}
            \mathrm{dist}\hspace{0.3mm}(\PP_{\nu,\mat{S}},\QQ_{\nu,\mat{S}}) \leq \frac{C \hspace{0.3mm} d^{\hspace{0.3mm}3/2}}{\sqrt{\nu}} \qquad \text{and} \qquad \mathcal{H}(\PP_{\nu,\mat{S}},\QQ_{\nu,\mat{S}}) \leq \sqrt{\frac{2C \hspace{0.3mm} d^{\hspace{0.3mm}3/2}}{\sqrt{\nu}}},
        \end{equation}
        where $C > 0$ is a universal constant, $\mathcal{H}(\cdot,\cdot)$ denotes the Hellinger distance, and $\mathrm{dist}\hspace{0.3mm}(\cdot,\cdot)$ can be replaced by any of the following probability metrics: Total variation, Kolmogorov (or Uniform) metric, L\'evy metric, Discrepancy metric, Prokhorov metric.
    \end{theorem}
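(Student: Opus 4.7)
The plan is to estimate the total variation distance directly using the expansion from Theorem~\ref{thm:p.k.expansion}, and then convert to the other listed probability metrics via well-known inequalities. Start from the representation
\begin{equation*}
d_{TV}(\PP_{\nu,\mat{S}},\QQ_{\nu,\mat{S}}) = \frac{1}{2}\int_{\mathcal{S}^{\hspace{0.3mm}d}} |K_{\nu,\mat{S}}(\mat{X}) - g_{\nu,\mat{S}}(\mat{X})|\,\rd\mat{X},
\end{equation*}
with the convention that $K_{\nu,\mat{S}}$ vanishes outside $\mathcal{S}_{++}^{\hspace{0.3mm}d}$. Fix some $\eta\in(0,1)$ and split the integral over the bulk $B_{\nu,\mat{S}}(\eta)$ from \eqref{eq:thm:p.k.expansion.condition} and its complement.

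On the bulk, write $|K_{\nu,\mat{S}} - g_{\nu,\mat{S}}| = g_{\nu,\mat{S}}\cdot |K_{\nu,\mat{S}}/g_{\nu,\mat{S}} - 1|$ and apply expansion \eqref{eq:LLT.order.2}. Integrating the dominant $\nu^{-1/2}$ correction against $g_{\nu,\mat{S}}$ reduces the problem to controlling the SMN expectations $\EE[|\mathrm{tr}(\Delta_{\nu,\mat{S}}^3)|]$ and $\EE[|\mathrm{tr}(\Delta_{\nu,\mat{S}})|]$. Since under $\QQ_{\nu,\mat{S}}$ the matrix $\Delta_{\nu,\mat{S}}$ is a standardized Gaussian element of $\mathcal{S}^{\hspace{0.3mm}d}$, Wick's (Isserlis') theorem together with Cauchy--Schwarz gives $\EE[(\mathrm{tr}(\Delta_{\nu,\mat{S}}))^2] = \OO(d)$ and $\EE[(\mathrm{tr}(\Delta_{\nu,\mat{S}}^3))^2] = \OO(d^{\hspace{0.3mm}3})$, so the bulk contribution to the total variation is $\OO(d^{\hspace{0.3mm}3/2}/\sqrt{\nu})$, which matches the target rate. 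The $\nu^{-1}$ term in \eqref{eq:LLT.order.2} integrates to a strictly lower-order contribution, and the remainder is handled using the truncation that defines the bulk.

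For the complement, estimate crudely by
\begin{equation*}
\int_{B_{\nu,\mat{S}}(\eta)^c}|K_{\nu,\mat{S}} - g_{\nu,\mat{S}}|\,\rd\mat{X} \leq \PP_{\nu,\mat{S}}(B_{\nu,\mat{S}}(\eta)^c) + \QQ_{\nu,\mat{S}}(B_{\nu,\mat{S}}(\eta)^c).
\end{equation*}
Both probabilities are of the form $\PP(\max_{1\leq i\leq d}|\lambda_i(\Delta_{\nu,\mat{S}})| \geq c\,\nu^{1/6})$. Since the typical eigenvalues are $\OO(\sqrt{d})$ under both laws, Markov's inequality applied to a sufficiently high trace moment (computable from \eqref{eq:expectation.explicit.estimate}--\eqref{eq:covariance.explicit.estimate} on the Gaussian side, and from standard Wishart moment identities on the other) yields tails decaying faster than any polynomial in $\nu$, so the complement contribution is absorbed into the bulk estimate.

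For the Hellinger distance, conclude using the elementary inequality $\mathcal{H}^2(\PP,\QQ) \leq 2\,d_{TV}(\PP,\QQ)$. The Kolmogorov, L\'evy, Discrepancy, and Prokhorov metrics are each dominated by a universal constant multiple of $d_{TV}$, as tabulated for example in the probability metric survey of Gibbs and Su. I expect the main obstacle to be obtaining the tail bound with clean $d$-dependence: controlling the largest eigenvalue of $\Delta_{\nu,\mat{S}}$ at the scale $\nu^{1/6}$ under the Wishart measure is less immediate than on the Gaussian side, and requires either a careful moment computation or an appeal to concentration inequalities for the spectrum of a rescaled Wishart matrix. The bulk moment step is comparatively routine once one recognizes that only low-order Wick contractions survive in the $\nu^{-1/2}$ term.
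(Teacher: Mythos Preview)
Your approach is correct and reaches the right rate, but it is organized differently from the paper. The paper does not bound $\int|K_{\nu,\mat{S}}-g_{\nu,\mat{S}}|$ directly. Instead it invokes a localized Pinsker-type inequality from \cite{MR1922539},
\[
\|\PP_{\nu,\mat{S}}-\QQ_{\nu,\mat{S}}\|\le\sqrt{2\,\PP_{\nu,\mat{S}}\!\big(B_{\nu,\mat{S}}^{\hspace{0.3mm}c}(1/2)\big)+\EE_{\PP_{\nu,\mat{S}}}\!\Big[\log\frac{\rd\PP_{\nu,\mat{S}}}{\rd\QQ_{\nu,\mat{S}}}\,\ind_{B_{\nu,\mat{S}}(1/2)}\Big]},
\]
and then expands the \emph{log}-ratio \eqref{eq:LLT.order.2.log} inside the expectation. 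The crucial gain there is that the $\nu^{-1/2}$ coefficient has small \emph{signed} mean under $\PP_{\nu,\mat{S}}$: by Lemma~\ref{lem:Leblanc.2012.boundary.Lemma.1}, $\EE[\mathrm{tr}(\Delta_{\nu,\mat{S}})]=0$ and $\EE[\mathrm{tr}(\Delta_{\nu,\mat{S}}^3)]=\OO(d^{\hspace{0.3mm}3}\nu^{-1/2})$, so the whole bracket is $\OO(d^{\hspace{0.3mm}3}\nu^{-1})$ and the square root delivers $d^{\hspace{0.3mm}3/2}/\sqrt{\nu}$. Your route instead uses the ratio expansion \eqref{eq:LLT.order.2}, integrates the \emph{absolute value} of the $\nu^{-1/2}$ term against $g_{\nu,\mat{S}}$, and lands on the same rate via $\EE_{\QQ}[|\mathrm{tr}(\Delta^3)|],\,(d+1)\EE_{\QQ}[|\mathrm{tr}(\Delta)|]=\OO(d^{\hspace{0.3mm}3/2})$. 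Your argument is more elementary (no auxiliary inequality needed, and the Gaussian moments are cleaner than the Wishart ones in Lemmas~\ref{lem:Leblanc.2012.boundary.Lemma.1}--\ref{lem:Leblanc.2012.boundary.Lemma.1.with.set.A}); the paper's argument exploits the near-vanishing of odd moments and works with the shorter log expansion, whose remainder involves $|\lambda_i|^5$ rather than $|\lambda_i|^9$. Your anticipated obstacle, the Wishart tail bound $\PP_{\nu,\mat{S}}(B^{\hspace{0.3mm}c})$, is dispatched in the paper in one line by a union bound and eigenvalue large deviations (cf.~\eqref{eq:concentration.bound}); it is not the crux.
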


\section{Proofs}\label{sec:proofs}

    \begin{proof}[\bf Proof of Theorem~\ref{thm:p.k.expansion}]
        First, note that
        \begin{equation}
            \mat{S}^{-1/2} \hspace{0.3mm} \mat{X} \, \mat{S}^{-1/2} = \nu \, (\nu \hspace{0.3mm} \mat{S})^{-1/2} (\nu \hspace{0.3mm} \mat{S} + \mat{X} - \nu \hspace{0.3mm} \mat{S}) (\nu \hspace{0.3mm} \mat{S})^{-1/2} = \nu \, (\mathrm{I}_d + \sqrt{2/\nu} \, \Delta_{\nu,\mat{S}}),
        \end{equation}
        so we can rewrite \eqref{eq:Wishart.density} as
        \begin{equation}\label{eq:Wishart.density.rewrite}
            K_{\nu,\mat{S}}(\mat{X})
            = \frac{|\mathrm{I}_d + \sqrt{2 / \nu} \, \Delta_{\nu,\mat{S}}|^{\nu/2 - (d + 1)/2}}{2^{d/2} \pi^{\hspace{0.3mm}d(d + 1)/4} |\hspace{-0.5mm} \sqrt{2\nu} \, \mat{S}|^{(d + 1)/2}} \cdot \frac{\exp\left(-\frac{\nu}{2}\mathrm{tr}(\mathrm{I}_d + \sqrt{2 / \nu} \, \Delta_{\nu,\mat{S}})\right) \, \exp(\frac{\nu d}{2})}{\prod_{i=1}^d \frac{(\nu - (i + 1))^{(\nu - i)/2}}{e^{-(i + 1)/2} \, \nu^{(\nu - i)/2}} \cdot \prod_{i=1}^d \frac{\Gamma(\frac{1}{2} (\nu - (i + 1)) + 1)}{\sqrt{2\pi} e^{-\frac{1}{2} (\nu - (i + 1))} [\frac{1}{2} (\nu - (i + 1))]^{(\nu - i)/2}}}.
        \end{equation}
        Using the Taylor expansion
        \begin{equation}\label{eq:Taylor.log.1.minus.y}
            \log(1 - y) = - y - \frac{y^2}{2} - \frac{y^3}{3} - \frac{y^4}{4} + \OO(y^5), \quad |y| < 1,
        \end{equation}
        and Stirling's formula,
        \begin{equation}
            \log \Gamma(z + 1) = \frac{1}{2} \log(2\pi) + (z + \tfrac{1}{2}) \log z - z + \frac{1}{12z} + \OO(z^{-3}), \quad z > 0,
        \end{equation}
        see, e.g., \citet[p.257]{MR0167642}, we have
        \begin{align}\label{eq:expr.1}
            \log \left(\prod_{i=1}^d \frac{(\nu - (i + 1))^{(\nu - i)/2}}{e^{-(i + 1)/2} \, \nu^{(\nu - i)/2}}\right)
            &= \sum_{i=1}^d \frac{i + 1}{2} + \sum_{i=1}^d \frac{\nu - i}{2} \log\left(1 - \frac{i + 1}{\nu}\right) \notag \\
            &= \sum_{i=1}^d \frac{i + 1}{2} - \sum_{i=1}^d \left\{\frac{(\nu - i) (i + 1)}{2 \nu} + \frac{(\nu - i) (i + 1)^2}{4 \nu^2} + \OO_d(\nu^{-2})\right\} \notag \\
            &= \sum_{i=1}^d \left\{\nu^{-1} \cdot \frac{i^2 - 1}{4} + \OO_d(\nu^{-2})\right\} = \nu^{-1} \cdot \frac{d \, (2 d^{\hspace{0.3mm}2} + 3 d - 5)}{24} + \OO_d(\nu^{-2}),
        \end{align}
        and
        \begin{equation}\label{eq:expr.2}
            \log \left(\prod_{i=1}^d \frac{\Gamma(\frac{1}{2} (\nu - (i + 1)) + 1)}{\sqrt{2\pi} e^{-\frac{1}{2} (\nu - (i + 1))} [\frac{1}{2} (\nu - (i + 1))]^{(\nu - i)/2}}\right) = \sum_{i=1}^d \left\{\frac{1}{6 (\nu - (i + 1))} + \OO_d(\nu^{-3})\right\} = \nu^{-1} \cdot \frac{d}{6} + \OO_d(\nu^{-2}).
        \end{equation}
        By taking the logarithm in \eqref{eq:Wishart.density.rewrite} and using the expressions found in \eqref{eq:expr.1} and \eqref{eq:expr.2}, we obtain (also using the fact that $\lambda_i(\mathrm{I}_d + \sqrt{2/\nu} \, \Delta_{\nu,\mat{S}}) = 1 + \sqrt{2/\nu} \, \lambda_i(\Delta_{\nu,\mat{S}})$ for all $1 \leq i \leq d$):
        \begin{equation}
            \begin{aligned}
                \log K_{\nu,\mat{S}}(\mat{X})
                &= \frac{1}{2} (\nu - (d + 1)) \sum_{i=1}^d \log\left(1 + \sqrt{2 / \nu} \, \lambda_i(\Delta_{\nu,\mat{S}})\right) - \frac{1}{2} \log\left(2^d \pi^{\hspace{0.3mm}d(d + 1)/2} |\hspace{-0.5mm} \sqrt{2\nu} \, \mat{S}|^{d + 1}\right) - \frac{\nu}{2} \sum_{i=1}^d \sqrt{\frac{2}{\nu}} \, \lambda_i(\Delta_{\nu,\mat{S}}) \\[0.5mm]
                &\quad- \nu^{-1} \cdot \left\{\frac{d \, (2 d^{\hspace{0.3mm}2} + 3 d - 5)}{24} + \frac{d}{6}\right\} + \OO_d(\nu^{-2}).
            \end{aligned}
        \end{equation}
        By the Taylor expansion in \eqref{eq:Taylor.log.1.minus.y} and the fact that $\sum_{i=1}^d \lambda_i^k(\Delta_{\nu,\mat{S}}) = \mathrm{tr}(\Delta_{\nu,\mat{S}}^k)$ for all $k\in \N$, we have, uniformly for $\mat{X}\in B_{\nu,\mat{S}}(\eta)$,
        \begin{equation}
            \sum_{i=1}^d \log\left(1 + \sqrt{2 / \nu} \, \lambda_i(\Delta_{\nu,\mat{S}})\right) = \sqrt{\frac{2}{\nu}} \, \mathrm{tr}(\Delta_{\nu,\mat{S}}) - \frac{1}{\nu} \, \mathrm{tr}(\Delta_{\nu,\mat{S}}^2) + \frac{2\sqrt{2}}{3 \nu^{3/2}} \, \mathrm{tr}(\Delta_{\nu,\mat{S}}^3) - \frac{1}{\nu^2} \, \mathrm{tr}(\Delta_{\nu,\mat{S}}^4) + \OO_{d,\eta}\left(\frac{\max_{1 \leq i \leq d} |\lambda_i(\Delta_{\nu,\mat{S}})|^5}{\nu^{5/2}}\right).
        \end{equation}
        Therefore,
        \begin{equation}
            \begin{aligned}
                \log K_{\nu,\mat{S}}(\mat{X})
                &= - \frac{1}{2} \log\left(2^d \pi^{\hspace{0.3mm}d(d + 1)/2} |\hspace{-0.5mm} \sqrt{2\nu} \, \mat{S}|^{d + 1}\right) - \frac{1}{2} \mathrm{tr}(\Delta_{\nu,\mat{S}}^2) + \nu^{-1/2} \cdot \Bigg\{\frac{\sqrt{2}}{3} \, \mathrm{tr}(\Delta_{\nu,\mat{S}}^3) - \frac{d + 1}{\sqrt{2}} \, \mathrm{tr}(\Delta_{\nu,\mat{S}})\Bigg\} \\
                &\quad+ \nu^{-1} \cdot \left\{- \frac{1}{2} \, \mathrm{tr}(\Delta_{\nu,\mat{S}}^4) + \frac{d + 1}{2} \, \mathrm{tr}(\Delta_{\nu,\mat{S}}^2) - \left(\frac{d \, (2 d^{\hspace{0.3mm}2} + 3 d - 5)}{24} + \frac{d}{6}\right)\right\} + \OO_{d,\eta}\left(\frac{1 + \max_{1 \leq i \leq d} |\lambda_i(\Delta_{\nu,\mat{S}})|^5}{\nu^{3/2}}\right).
            \end{aligned}
        \end{equation}
        With the expression for the symmetric matrix-variate normal density in \eqref{eq:sym.matrix.normal.density}, we can rewrite the above as
        \begin{equation}\label{eq:log.p.before.next}
            \begin{aligned}
                \log \left(\frac{K_{\nu,\mat{S}}(\mat{X})}{g_{\nu,\mat{S}}(\mat{X})}\right)
                &= \nu^{-1/2} \cdot \Bigg\{\frac{\sqrt{2}}{3} \, \mathrm{tr}(\Delta_{\nu,\mat{S}}^3) - \frac{d + 1}{\sqrt{2}} \, \mathrm{tr}(\Delta_{\nu,\mat{S}})\Bigg\} + \nu^{-1} \cdot \left\{- \frac{1}{2} \, \mathrm{tr}(\Delta_{\nu,\mat{S}}^4) + \frac{d + 1}{2} \, \mathrm{tr}(\Delta_{\nu,\mat{S}}^2) - \left(\frac{d \, (2 d^{\hspace{0.3mm}2} + 3 d - 5)}{24} + \frac{d}{6}\right)\right\} \\
                &\quad+ \OO_{d,\eta}\left(\frac{1 + \max_{1 \leq i \leq d} |\lambda_i(\Delta_{\nu,\mat{S}})|^5}{\nu^{3/2}}\right),
            \end{aligned}
        \end{equation}
        which proves \eqref{eq:LLT.order.2.log}.
        To obtain \eqref{eq:LLT.order.2} and conclude the proof, we take the exponential on both sides of the last equation and we expand the right-hand side with
        \begin{equation}\label{eq:Taylor.exponential}
            e^y = 1 + y + \frac{y^2}{2} + \OO(e^{\widetilde{\eta}} y^3), \quad \text{for } -\infty < y \leq \widetilde{\eta}.
        \end{equation}
        For $\nu$ large enough and uniformly for $\mat{X}\in B_{\nu, \mat{S}}(\eta)$, the right-hand side of \eqref{eq:log.p.before.next} is $\OO_d(1)$, so we get
        \begin{equation}
            \begin{aligned}
                \frac{K_{\nu,\mat{S}}(\mat{X})}{g_{\nu,\mat{S}}(\mat{X})} = 1
                &+ \nu^{-1/2} \cdot
                    \Bigg\{\frac{\sqrt{2}}{3} \, \mathrm{tr}(\Delta_{\nu,\mat{S}}^3) - \frac{d + 1}{\sqrt{2}} \, \mathrm{tr}(\Delta_{\nu,\mat{S}})\Bigg\} + \nu^{-1} \cdot
                    \left\{\frac{1}{9} \, \left(\mathrm{tr}(\Delta_{\nu,\mat{S}}^3)\right)^2 - \frac{d + 1}{3} \, \mathrm{tr}(\Delta_{\nu,\mat{S}}^3) \, \mathrm{tr}(\Delta_{\nu,\mat{S}}) + \frac{(d + 1)^2}{4} \, \left(\mathrm{tr}(\Delta_{\nu,\mat{S}})\right)^2 \right. \\
                &\quad\left.- \frac{1}{2} \, \mathrm{tr}(\Delta_{\nu,\mat{S}}^4) + \frac{d + 1}{2} \, \mathrm{tr}(\Delta_{\nu,\mat{S}}^2) - \left(\frac{d \, (2 d^{\hspace{0.3mm}2} + 3 d - 5)}{24} + \frac{d}{6}\right)\right\} + \OO_{d,\eta}\left(\frac{1 + \max_{1 \leq i \leq d} |\lambda_i(\Delta_{\nu,\mat{S}})|^9}{\nu^{3/2}}\right).
            \end{aligned}
        \end{equation}
        This ends the proof.
    \end{proof}

    \begin{proof}[\bf Proof of Theorem~\ref{thm:bias}]
        Assume that \eqref{eq:assump:f.density.2} holds, and let
        \begin{equation}\label{eq:def.xi.Wishart.r.v}
            \mat{W}_{\mat{S}} \leqdef (\mat{W}_{\bb{i}})_{\bb{i}\in [d]^2} \sim \mathrm{Wishart}_d(1/b, b \hspace{0.3mm} \mat{S}), \quad \mat{S}\in \mathcal{S}_{++}^{\hspace{0.3mm}d}, ~b > 0.
        \end{equation}
        (Recall the notation $[d]\leqdef \{1,\dots,d\}$.)
        By a second order mean value theorem, we have
        \begin{equation}\label{eq:second.order.MVT}
            \begin{aligned}
                f(\mat{W}_{\mat{S}}) - f(\mat{S})
                &= \sum_{\substack{\bb{i}\in [d]^2 \\ i_1 \leq i_2}} (\mat{W}_{\bb{i}} - \mat{S}_{\bb{i}}) \frac{\partial}{\partial \mat{S}_{\bb{i}}} f(\mat{S}) + \frac{1}{2} \sum_{\substack{\bb{i},\bb{j}\in [d]^2 \\ i_1 \leq i_2, \, j_1 \leq j_2}} (\mat{W}_{\bb{i}} - \mat{S}_{\bb{i}}) (\mat{W}_{\bb{j}} - \mat{S}_{\bb{j}}) \frac{\partial^2}{\partial \mat{S}_{\bb{i}} \partial \mat{S}_{\bb{j}}} f(\mat{S}) \\[-1mm]
                &\quad+ \frac{1}{2} \sum_{\substack{\bb{i},\bb{j}\in [d]^2 \\ i_1 \leq i_2, \, j_1 \leq j_2}} (\mat{W}_{\bb{i}} - \mat{S}_{\bb{i}}) (\mat{W}_{\bb{j}} - \mat{S}_{\bb{j}}) \left(\frac{\partial^2}{\partial \mat{S}_{\bb{i}} \partial \mat{S}_{\bb{j}}} f(\mat{M}_{\mat{S}}) - \frac{\partial^2}{\partial \mat{S}_{\bb{i}} \partial \mat{S}_{\bb{j}}} f(\mat{S})\right),
            \end{aligned}
        \end{equation}
        for some random matrix $\mat{M}_{\mat{S}}\in \mathcal{S}_{++}^{\hspace{0.3mm}d}$ on the line segment joining $\mat{W}_{\mat{S}}$ and $\mat{S}$ in $\mathcal{S}_{++}^{\hspace{0.3mm}d}$. (The mean value theorem is applicable because the subspace $\mathcal{S}_{\scriptscriptstyle ++}^{\scriptscriptstyle d}$ is open and convex in the space of symmetric matrices of size $d \times d$, recall Remark~\ref{rem:open.and.convex}.)
        If we take the expectation in the last equation, and then use the estimates in \eqref{eq:expectation.explicit.estimate} and \eqref{eq:covariance.explicit.estimate}, we get
        \begin{align}\label{eq:prop.Berntein.decomposition}
            &\Bigg|f_b(\mat{S}) - f(\mat{S}) - \frac{b}{2} \sum_{\substack{\bb{i},\bb{j}\in [d]^2 \\ i_1 \leq i_2, \, j_1 \leq j_2}} \left[2 \hspace{0.3mm} B_d^{\top} (\mat{S} \otimes \mat{S}) B_d\right]_{(\bb{i},\bb{j})} \frac{\partial^2}{\partial \mat{S}_{\bb{i}} \partial \mat{S}_{\bb{j}}} f(\mat{S})\Bigg| \notag \\
            &\quad\leq \frac{1}{2} \sum_{\substack{\bb{i},\bb{j}\in [d]^2 \\ i_1 \leq i_2, \, j_1 \leq j_2}} \EE\left[|\mat{W}_{\bb{i}} - \mat{S}_{\bb{i}}| |\mat{W}_{\bb{j}} - \mat{S}_{\bb{j}}|  \cdot \left|\frac{\partial^2}{\partial \mat{S}_{\bb{i}} \partial \mat{S}_{\bb{j}}} f(\mat{M}_{\mat{S}}) - \frac{\partial^2}{\partial \mat{S}_{\bb{i}} \partial \mat{S}_{\bb{j}}} f(\mat{S})\right| \cdot \ind_{\{\|\mathrm{vec}(\mat{W}_{\mat{S}} - \mat{S})\|_1 \leq \delta_{\e,d}\}}\right] \notag \\
            &\qquad+ \frac{1}{2} \sum_{\substack{\bb{i},\bb{j}\in [d]^2 \\ i_1 \leq i_2, \, j_1 \leq j_2}} \EE\left[|\mat{W}_{\bb{i}} - \mat{S}_{\bb{i}}| |\mat{W}_{\bb{j}} - \mat{S}_{\bb{j}}| \cdot \left|\frac{\partial^2}{\partial \mat{S}_{\bb{i}} \partial \mat{S}_{\bb{j}}} f(\mat{M}_{\mat{S}}) - \frac{\partial^2}{\partial \mat{S}_{\bb{i}} \partial \mat{S}_{\bb{j}}} f(\mat{S})\right| \cdot \ind_{\{\|\mathrm{vec}(\mat{W}_{\mat{S}} - \mat{S})\|_1 > \delta_{\e,d}\}}\right] \notag \\[1mm]
            &\quad\reqdef\Delta_1 + \Delta_2
        \end{align}
        where for any given $\e > 0$, the real number $\delta_{\e,d}\in (0,1]$ is such that
        \begin{equation}\label{eq:second.order.partial.derivatives.cont}
            \|\mathrm{vec}(\mat{S}' - \mat{S})\|_1 \leq \delta_{d,\e} \quad \text{implies} \quad \left|\frac{\partial^2}{\partial \mat{S}_{\bb{i}} \partial \mat{S}_{\bb{j}}} f(\mat{S}') - \frac{\partial^2}{\partial \mat{S}_{\bb{i}} \partial \mat{S}_{\bb{j}}} f(\mat{S})\right| < \e,
        \end{equation}
        uniformly for $\mat{S},\mat{S}'\in \mathcal{S}_{++}^{\hspace{0.3mm}d}$. (We know that such a number exists because the second order partial derivatives of $f$ are assumed to be uniformly continuous on $\mathcal{S}_{++}^{\hspace{0.3mm}d}$.)
        Equations~\eqref{eq:second.order.partial.derivatives.cont} and \eqref{eq:covariance.explicit.estimate} then yield, together with the Cauchy-Schwarz inequality,
        \begin{equation}\label{eq:prop.Berntein.end.1}
            \Delta_1 \leq \frac{1}{2} \sum_{\substack{\bb{i},\bb{j}\in [d]^2 \\ i_1 \leq i_2, \, j_1 \leq j_2}} \e \cdot \sqrt{\EE\left[|\mat{W}_{\bb{i}} - \mat{S}_{\bb{i}}|^2\right]} \sqrt{\EE\left[|\mat{W}_{\bb{j}} - \mat{S}_{\bb{j}}|^2\right]} \, = \e \cdot \OO_{d,\mat{S}}(b).
        \end{equation}
        The second order partial derivatives of $f$ are also assumed to be bounded, say by some constant $M_d > 0$.
        Furthermore, $\{\|\mathrm{vec}(\mat{W}_{\mat{S}} - \mat{S})\|_1 > \delta_{\e,d}\}$ implies that at least one component of $(\mat{W}_{\bb{k}} - \mat{S}_{\bb{k}})_{\bb{k}\in [d]^2}$ is larger than $\delta_{\e,d} / d^{\hspace{0.3mm}2}$, so a union bound over $\bb{k}$ followed by $d^{\hspace{0.3mm}2}$ concentration bounds for the marginals of the Wishart distribution (the diagonal entries of a Wishart random matrix are chi-square distributed while the off-diagonal entries are variance-gamma distributed) yield
        \begin{equation}\label{eq:prop.Berntein.end.2}
            \Delta_2 \leq \OO_{d,\mat{S}}\left(\frac{1}{2} \sum_{\substack{\bb{i},\bb{j}\in [d]^2 \\ i_1 \leq i_2, \, j_1 \leq j_2}} 2 M_d \cdot \sqrt{\sum_{\bb{k}\in [d]^2} \PP\left(|\mat{W}_{\bb{k}} - \mat{S}_{\bb{k}}| \geq \delta_{\e,d} / d^{\hspace{0.3mm}2}\right)}\right) \leq \OO_{d,\mat{S}}\left(d^{\hspace{0.3mm}5} \, M_d \cdot \sqrt{2 \exp\left(- \frac{(\delta_{\e,d} / d^{\hspace{0.3mm}2})^2}{2 \cdot b c_{d,\mat{S}}}\right)}\right),
        \end{equation}
        where $c_{d,\mat{S}} > 0$ is a large enough constant that depends only on $d$ and $\mat{S}$.
        If we choose a sequence $\e = \e(b)$ that goes to $0$ as $b\to 0$ slowly enough that $1 \geq \delta_{\e,d} > d^2 \, [100 \, b c_{d,\mat{S}} \, |\log b|]^{1 / 2}$, for example, then $\Delta_1 + \Delta_2$ in \eqref{eq:prop.Berntein.decomposition} is $\oo_{d,\mat{S}}(b)$ by \eqref{eq:prop.Berntein.end.1} and \eqref{eq:prop.Berntein.end.2}.
        This ends the proof.
    \end{proof}

    \begin{proof}[\bf Proof of Theorem~\ref{thm:variance.near.boundary}]
        Assume that \eqref{eq:assump:f.density} holds.
        First, note that we can write
        \begin{equation}\label{eq:thm:bias.var.density.begin.variance}
            \hat{f}_{n,b}(\mat{S}) - f_b(\mat{S}) = \frac{1}{n} \sum_{i=1}^n Y_{i,b}(\mat{S}),
        \end{equation}
        where the random variables
        \begin{equation}\label{eq:Y.i.b.random.field}
            Y_{i,b}(\mat{S}) \leqdef K_{1/b, b \hspace{0.3mm} \mat{S}}(\mat{X}_i) - f_b(\mat{S}), ~~1 \leq i \leq n, \quad \text{are i.i.d.}
        \end{equation}
        Hence, if $\widetilde{\mat{W}}_{\mat{S}} \sim \mathrm{Wishart}_d(2/b - (d + 1), b \hspace{0.3mm} \mat{S} / 2)$, then
        \begin{align}\label{eq:thm:bias.var.density.variance.expression}
            \VV(\hat{f}_{n,b}(\mat{S}))
            &= n^{-1} \, \EE\left[K_{1/b, b \hspace{0.3mm} \mat{S}}(\mat{X})^2\right] - n^{-1} \left(f_b(\mat{S})\right)^2 = n^{-1} A_b(\mat{S}) \, \EE[f(\widetilde{\mat{W}}_{\mat{S}})] - \OO_{d,\mat{S}}(n^{-1}) \\[0.5mm]
            &= n^{-1} A_b(\mat{S}) \, (f(\mat{S}) + \OO_{d,\mat{S}}(b^{1/2})) - \OO_{d,\mat{S}}(n^{-1}),
        \end{align}
        where
        \begin{align}\label{eq:def.A.b.s}
            A_b(\mat{S})
            &\leqdef |2b\sqrt{\pi} \, \mat{S}|^{-\frac{d + 1}{2}} \, \pi^{\frac{d}{2}} \prod_{i=1}^d \frac{\Gamma\left(\frac{1}{b} - \frac{d + i}{2}\right)}{2^{\frac{1}{b} - i} \, \Gamma^{\hspace{0.3mm}2}\left(\frac{1}{2b} - \frac{i + 1}{2} + 1\right)},
        \end{align}
        and where the last line in \eqref{eq:thm:bias.var.density.variance.expression} follows from the Lipschitz continuity of $f$, the Cauchy-Schwarz inequality and the analogue of \eqref{eq:covariance.explicit.estimate} for $\widetilde{\mat{W}}_{\mat{S}} = (\widetilde{\mat{W}}_{\bb{i}})_{\bb{i}\in [d]^2}$:
        \begin{equation}\label{eq:special.eq}
            \EE[f(\widetilde{\mat{W}}_{\mat{S}})] - f(\mat{S}) = \sum_{\substack{\bb{i}\in [d]^2 \\ i_1 \leq i_2}} \OO\left(\EE\left[|\widetilde{\mat{W}}_{\bb{i}} - \mat{S}_{\bb{i}}|\right]\right)  \leq \sum_{\substack{\bb{i}\in [d]^2 \\ i_1 \leq i_2}} \OO\left(\sqrt{\EE\left[|\widetilde{\mat{W}}_{\bb{i}} - \mat{S}_{\bb{i}}|^2\right]}\right) = \OO_{d,\mat{S}}(b^{1/2}).
        \end{equation}
        Now, by Stirling's formula,
        \begin{equation}
            \frac{\sqrt{2\pi} e^{-x} x^{x + 1/2}}{\Gamma(x + 1)} \longrightarrow 1, \quad x\to \infty.
        \end{equation}
        Therefore,
        \begin{equation}\label{eq:lem:A.b.x.asymptotics.begin}
            \begin{aligned}
                A_b(\mat{S})
                &= |2b\sqrt{\pi} \, \mat{S}|^{-\frac{d + 1}{2}} \, \pi^{\frac{d}{2}} \cdot \frac{1}{(2\pi)^{d/2}} \, \prod_{i=1}^d \frac{e^{(d - i)/2} \left(\frac{1}{b} - \frac{d + i + 2}{2}\right)^{\frac{1}{b} - \frac{d + i + 1}{2}}}{2^{\frac{1}{b} - i} \left(\frac{1}{2b} - \frac{i + 1}{2}\right)^{\frac{1}{b} - i}} \cdot (1 + \OO(b)) \\
                &= |2b\sqrt{\pi} \, \mat{S}|^{-\frac{d + 1}{2}} \, \pi^{\frac{d}{2}} \cdot \frac{b^{\frac{d(d + 1)}{4}}}{(2\pi)^{d/2}} \cdot \prod_{i=1}^d e^{(d - i)/2} \left(1 - \frac{(d - i)/2}{\frac{1}{b} - (i + 1)}\right)^{1/b}\cdot (1 + \OO(b))
                = \frac{|\sqrt{b\pi} \, \mat{S}|^{-\frac{d + 1}{2}}}{2^{d(d + 2)/2}} \cdot (1 + \OO_d(b)),
            \end{aligned}
        \end{equation}
        where the last equality follows from the fact that $\lim_{n\to \infty} e^x (1 - \frac{x}{n})^n \to 1$ for all $x\in \R$.
        By our assumption on $\mat{S}$, note that $|\mat{S}| = b^{|\mathcal{J}|} \, |\mat{K}|$, so we get the general expression in \eqref{eq:thm:bias.var.density.eq.var} by combining \eqref{eq:thm:bias.var.density.variance.expression} and \eqref{eq:lem:A.b.x.asymptotics.begin}.
        This ends the proof.
    \end{proof}

    \begin{proof}[\bf Proof of Theorem~\ref{thm:MISE.optimal.density}]
        Assume that \eqref{eq:assump:f.density.2} holds.
        By Theorem~\ref{thm:bias}, Theorem~\ref{thm:variance.near.boundary} for $\mathcal{J} = \emptyset$, our assumptions in \eqref{eq:further.assumptions.MISE}, and the dominated convergence theorem, it is possible to show that
        \begin{equation}
            \begin{aligned}
                \mathrm{MISE}_{\delta}[\hat{f}_{n,b}]
                &= \int_{\mathcal{S}_{++}^{\hspace{0.3mm}d}(\delta)} \VV(\hat{f}_{n,b}(\mat{S})) \rd \mat{S} + \int_{\mathcal{S}_{++}^{\hspace{0.3mm}d}(\delta)} \BB[\hat{f}_{n,b}(\mat{S})]^2 \rd \mat{S} \\
                &= n^{-1} b^{-r(d)/2} \int_{\mathcal{S}_{++}^{\hspace{0.3mm}d}(\delta)} \psi(\mat{S}) f(\mat{S}) \rd \mat{S} + b^2 \int_{\mathcal{S}_{++}^{\hspace{0.3mm}d}(\delta)} g^2(\mat{S}) \rd \mat{S} + \oo_{d,\delta}(n^{-1} b^{-r(d)/2}) + \oo_{d,\delta}(b^2).
            \end{aligned}
        \end{equation}
        This ends the proof.
    \end{proof}

    \begin{proof}[\bf Proof of Theorem~\ref{thm:Theorem.3.2.and.3.3.Babu.Canty.Chaubey}]
        Assume that \eqref{eq:assump:f.density.2} holds.
        By \eqref{eq:thm:bias.var.density.begin.variance}, the asymptotic normality of $n^{1/2} b^{r(d)/4} (\hat{f}_{n,b}(\mat{S}) - f_b(\mat{S}))$ will be proved if we verify the following Lindeberg condition for double arrays (see, e.g., Section 1.9.3 in \cite{MR0595165}):
        For every $\e > 0$,
        \begin{equation}\label{eq:prop:Proposition.1.Babu.Canty.Chaubey.Lindeberg.condition}
            s_b^{-2} \, \EE\left[|Y_{1,b}(\mat{S})|^2 \, \ind_{\{|Y_{1,b}(\mat{S})| > \e n^{1/2} s_b\}}\right] \longrightarrow 0, \quad n\to \infty,
        \end{equation}
        where $s_b^2 \leqdef \EE\left[|Y_{1,b}(\mat{S})|^2\right]$ and $b = b(n)\to 0$.
        From Lemma~\ref{lem:Wishart.density.bound} with $\nu = 1/b$ and $\mat{M} = b \, \mat{S}$, we know that
        \begin{equation}
            |Y_{1,b}(\mat{S})| = \OO\left(\psi(\mat{S}) b^{-r(d)/2}\right) = \OO_{d,\mat{S}}(b^{-r(d)/2}),
        \end{equation}
        and we also know that $s_b = b^{-r(d)/4} \sqrt{\psi(\mat{S}) f(\mat{S})} \, (1 + \oo_{d,\mat{S}}(1))$ when $f$ is Lipschitz continuous and bounded, by the proof of Theorem~\ref{thm:variance.near.boundary}.
        Therefore, whenever $n^{1/2} b^{r(d)/4}\to \infty$ as $n\to \infty$ (and $b\to 0$), we have
        \begin{equation}\label{eq:prop:Proposition.1.Babu.Canty.Chaubey.Lindeberg.condition.verify}
            \frac{|Y_{1,b}(\mat{S})|}{n^{1/2} s_b} = \OO_{d,\mat{S}}(n^{-1/2} \, b^{r(d)/4} \, b^{-r(d)/2}) = \OO_{d,\mat{S}}(n^{-1/2} b^{-r(d)/4}) \longrightarrow 0.
        \end{equation}
        Under this condition, Equation~\eqref{eq:prop:Proposition.1.Babu.Canty.Chaubey.Lindeberg.condition} holds (since for any given $\e > 0$, the indicator function is equal to $0$ for $n$ large enough, independently of $\omega$) and thus
        \begin{equation}
            \begin{aligned}
                n^{1/2} b^{r(d)/4} (\hat{f}_{n,b}(\mat{S}) - f_b(\mat{S}))
                &= n^{1/2} b^{r(d)/4} \cdot \frac{1}{n} \sum_{i=1}^n Y_{i,m} \stackrel{\mathscr{D}}{\longrightarrow} \mathcal{N}(0,\psi(\mat{S}) f(\mat{S})).
            \end{aligned}
        \end{equation}
        This ends the proof.
    \end{proof}

    \begin{proof}[\bf Proof of Theorem~\ref{thm:total.variation}]
        By the comparison of the total variation norm $\|\cdot\|$ with the Hellinger distance on page 726 of \citet{MR1922539}, we already know that
        \begin{equation}\label{eq:first.bound.total.variation}
            \|\PP_{\nu,\mat{S}} - \QQ_{\nu,\mat{S}}\| \leq \sqrt{2 \, \PP\left(\mat{X}\in B_{\nu,\mat{S}}^{\hspace{0.3mm}c}(1/2)\right) + \EE\left[\log\Bigg(\frac{\rd \PP_{\nu,\mat{S}}}{\rd \QQ_{\nu,\mat{S}}}(\mat{X})\Bigg) \, \ind_{\{\mat{X}\in B_{\nu,\mat{S}}(1/2)\}}\right]}.
        \end{equation}
        Then, by applying a union bound followed by large deviation bounds on the eigenvalues of the Wishart matrix, we get, for $\nu$ large enough,
        \begin{equation}\label{eq:concentration.bound}
            \PP(\mat{X}\in B_{\nu,\mat{S}}^{\hspace{0.3mm}c}(1/2)) \leq \sum_{i=1}^d \PP\left(|\lambda_i(\Delta_{\nu,\mat{S}})| > \frac{\nu^{1/6}}{2\sqrt{2}}\right) \leq d \cdot 2 \, \exp\left(-\frac{\nu^{1/3}}{100}\right).
        \end{equation}
        By Theorem~\ref{thm:p.k.expansion}, we have
        \begin{equation}\label{eq:estimate.I.begin}
            \begin{aligned}
                \EE\left[\log\Bigg(\frac{\rd \PP_{\nu,\mat{S}}}{\rd \QQ_{\nu,\mat{S}}}(\mat{X})\Bigg) \, \ind_{\{\mat{X}\in B_{\nu,\mat{S}}(1/2)\}}\right]
                &= \nu^{-1/2} \cdot \Bigg\{\frac{\sqrt{2}}{3} \cdot \EE\left[\mathrm{tr}(\Delta_{\nu,\mat{S}}^3)\right] - \frac{d + 1}{\sqrt{2}} \cdot \EE\left[\mathrm{tr}(\Delta_{\nu,\mat{S}})\right]\Bigg\} \\
                &\quad+ \nu^{-1/2} \cdot \OO\left(\left|\EE\left[\mathrm{tr}(\Delta_{\nu,\mat{S}}^3) \, \ind_{\{\bb{\lambda}(\Delta_{\nu,\mat{S}})\in B_{\nu,\mat{S}}^{\hspace{0.3mm}c}(1/2)\}}\right]\right| + d \cdot \left|\EE\left[\mathrm{tr}(\Delta_{\nu,\mat{S}}) \, \ind_{\{\bb{\lambda}(\Delta_{\nu,\mat{S}})\in B_{\nu,\mat{S}}^{\hspace{0.3mm}c}(1/2)\}}\right]\right|\right) \\[0.5mm]
                &\quad+ \nu^{-1} \cdot \OO\left(\left|\EE\left[\mathrm{tr}(\Delta_{\nu,\mat{S}}^4)\right]\right| + d \cdot \left|\EE\left[\mathrm{tr}(\Delta_{\nu,\mat{S}}^2)\right]\right| + d^{\hspace{0.3mm}3}\right).
            \end{aligned}
        \end{equation}
        On the right-hand side, the first and third lines are estimated using Lemma~\ref{lem:Leblanc.2012.boundary.Lemma.1}, and the second line is bounded using Lemma~\ref{lem:Leblanc.2012.boundary.Lemma.1.with.set.A}.
        We find
        \begin{equation}\label{eq:estimate.I.begin.next}
            \EE\left[\log\Bigg(\frac{\rd \PP_{\nu,\mat{S}}}{\rd \QQ_{\nu,\mat{S}}}(\mat{X})\Bigg) \, \ind_{\{\mat{X}\in B_{\nu,\mat{S}}(1/2)\}}\right] = \OO(\nu^{-1} d^{\hspace{0.3mm}3}).
        \end{equation}
        Putting \eqref{eq:concentration.bound} and \eqref{eq:estimate.I.begin} together in \eqref{eq:first.bound.total.variation} gives the conclusion.
    \end{proof}

\appendix

\section{Technical computations}

    Below, we compute the expectations for the trace of powers (up to $4$) of a normalized Wishart matrix.
    The lemma is used to estimate some trace moments and the $\asymp \nu^{-1}$ errors in \eqref{eq:estimate.I.begin} of the proof of Theorem~\ref{thm:total.variation}, and also as a preliminary result for the proof of Lemma~\ref{lem:Leblanc.2012.boundary.Lemma.1.with.set.A}.

    \begin{lemma}\label{lem:Leblanc.2012.boundary.Lemma.1}
        Let $\nu > d - 1$ and $\mat{S}\in \mathcal{S}_{++}^{\hspace{0.3mm}d}$ be given.
        If $\mat{X}\sim \mathrm{Wishart}_d(\nu,\mat{S})$ according to \eqref{eq:Wishart.density}, then
        \begin{align}
            \EE\left[\mathrm{tr}(\Delta_{\nu,\mathrm{I}_d})\right]
            &= 0, \qquad
            \EE\left[\mathrm{tr}(\Delta_{\nu,\mathrm{I}_d}^2)\right]
            = \frac{d (d + 1)}{2}, \qquad
            \EE\left[\mathrm{tr}(\Delta_{\nu,\mathrm{I}_d}^3)\right]
            = \nu^{-1/2} \cdot \frac{d (d^{\hspace{0.3mm}2} + 3 \hspace{0.3mm} d + 4)}{2 \sqrt{2}}, \\
            \EE\left[\mathrm{tr}(\Delta_{\nu,\mathrm{I}_d}^4)\right]
            &= \frac{d (2 \hspace{0.3mm} d^{\hspace{0.3mm}2} + 5 \hspace{0.3mm} d + 5)}{4} + \nu^{-1} \cdot \frac{d (d^{\hspace{0.3mm}3} + 6 \hspace{0.3mm} d^{\hspace{0.3mm}2} + 21 \hspace{0.3mm} d + 20)}{4},
        \end{align}
        where recall $\Delta_{\nu,\mat{S}} \leqdef (\hspace{-0.5mm} \sqrt{2\nu} \, \mat{S})^{-1/2} (\mat{X} - \nu \hspace{0.3mm} \mat{S}) (\hspace{-0.5mm} \sqrt{2\nu} \, \mat{S})^{-1/2}$.
    \end{lemma}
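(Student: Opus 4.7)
The plan is to reduce the computation to Gaussian moments via the representation $\mat{X} \stackrel{d}{=} Z^\top Z$, where $Z \in \R^{\nu \times d}$ has i.i.d.\ $\mathcal{N}(0, 1)$ entries (valid for $\nu \in \N$). Since every quantity below is a polynomial in $\nu$, identities obtained for $\nu \in \N$ extend to all $\nu > d - 1$ by the fact that two polynomials agreeing on infinitely many values coincide. As $\Delta_{\nu, \mathrm{I}_d} = (2\nu)^{-1/2} (\mat{X} - \nu \mathrm{I}_d)$, the lemma reduces to computing the central trace moments $\mu_k \leqdef \EE[\mathrm{tr}((\mat{X} - \nu \mathrm{I}_d)^k)]$ for $k = 1, 2, 3, 4$ and then dividing by $(2\nu)^{k/2}$.

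Because $\mat{X}$ and $\nu \mathrm{I}_d$ commute, the matrix binomial theorem gives $\mu_k = \sum_{j=0}^k \binom{k}{j} (-\nu)^{k-j} m_j$ with $m_j \leqdef \EE[\mathrm{tr}(\mat{X}^j)]$. Each $m_j$ is obtained by expanding
\begin{equation*}
    \mathrm{tr}(\mat{X}^j) = \sum_{a_1, \ldots, a_j = 1}^d X_{a_1 a_2} X_{a_2 a_3} \cdots X_{a_j a_1}, \qquad X_{ab} = \sum_{i=1}^\nu Z_{ia} Z_{ib},
\end{equation*}
and applying Isserlis' theorem to the resulting $2j$-point Gaussian moment. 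The nonzero contributions correspond to pairings of the $2j$ factors that force equality of both row and column indices within each pair; organizing the sum by the cycle structure that the pairing induces on the cyclic trace contraction yields a polynomial in $\nu$ and $d$ of total degree $j$ in $\nu$.

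The first two cases are immediate: $\mu_1 = 0$ since $\EE[\mat{X}] = \nu \mathrm{I}_d$, and using the standard covariance formula $\CC(X_{ab}, X_{cd}) = \nu (\delta_{ac}\delta_{bd} + \delta_{ad}\delta_{bc})$ (itself an instance of Isserlis),
\begin{equation*}
    \mu_2 = \sum_{a,b = 1}^d \CC(X_{ab}, X_{ba}) = \nu \, d (d + 1),
\end{equation*}
so $\EE[\mathrm{tr}(\Delta_{\nu, \mathrm{I}_d}^2)] = d(d+1)/2$. Carrying out the same procedure for $j = 3$ and $j = 4$ yields
\begin{align*}
    m_3 &= \nu^3 d + 3 \nu^2 d (d + 1) + \nu \, d (d^2 + 3 d + 4), \\
    m_4 &= \nu^4 d + 6 \nu^3 d (d + 1) + \nu^2 d (6 d^2 + 17 d + 21) + \nu \, d (d^3 + 6 d^2 + 21 d + 20),
\end{align*}
which, combined with $m_1$ and $m_2$ via the binomial expansion, give $\mu_3 = \nu \, d (d^2 + 3 d + 4)$ and $\mu_4 = \nu^2 d (2 d^2 + 5 d + 5) + \nu \, d (d^3 + 6 d^2 + 21 d + 20)$. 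Dividing by $(2\nu)^{3/2}$ and $(2\nu)^2$ respectively then reproduces the claimed expressions for $\EE[\mathrm{tr}(\Delta_{\nu, \mathrm{I}_d}^3)]$ and $\EE[\mathrm{tr}(\Delta_{\nu, \mathrm{I}_d}^4)]$.

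The main obstacle is the combinatorial bookkeeping in the computation of $m_4$: the number of Wick pairings is moderate but the enumeration is error-prone, and pairings must be grouped correctly by which of the $i_r$'s and $a_r$'s coincide. A useful sanity check is the case $d = 1$, for which $\mathrm{tr}(\mat{X}) = \mat{X} \sim \chi^2_\nu$ and the classical central moments are $\mu_2 = 2 \nu$, $\mu_3 = 8 \nu$, $\mu_4 = 12 \nu^2 + 48 \nu$; all three match the $d = 1$ specializations of the formulas above, which gives confidence that the signs and coefficients of the computed polynomials in $\nu$ and $d$ are correct.
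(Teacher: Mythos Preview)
Your proposal is correct and arrives at the same intermediate formulas as the paper, but the route differs in one notable respect. The paper does not derive the raw moments $\EE[\mat{Y}^k]$ (with $\mat{Y}\sim\mathrm{Wishart}_d(\nu,\mathrm{I}_d)$) from scratch; it simply quotes the closed-form matrix expressions $\EE[\mat{Y}^k]=c_k(\nu,d)\,\mathrm{I}_d$ for $k\le 4$ from the literature (Letac--Massam, von Rosen, Graczyk--Letac--Massam), then expands $(\mat{Y}-\nu\mathrm{I}_d)^k$ via the ordinary binomial (legitimate since $\nu\mathrm{I}_d$ is central) and takes traces. Your version replaces that citation by an explicit Gaussian computation: write $\mat{X}=Z^\top Z$ for integer $\nu$, evaluate $m_j=\EE[\mathrm{tr}(\mat{X}^j)]$ by Isserlis/Wick pairings, and extend to real $\nu>d-1$ by polynomial identity. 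The binomial step and the final division by $(2\nu)^{k/2}$ are identical in both arguments, and your $m_3,m_4$ agree exactly with the traces of the paper's cited $\EE[\mat{Y}^3],\EE[\mat{Y}^4]$.

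What each approach buys: the paper's is shorter and avoids the error-prone Wick enumeration you flag, at the cost of depending on an external reference; yours is self-contained and comes with the useful $d=1$ chi-square check. One small point worth tightening in yours is the sentence ``since every quantity below is a polynomial in $\nu$'': this is true, but it is not established by the Wick computation itself (which only covers $\nu\in\N$). A one-line justification---e.g.\ differentiating the Wishart Laplace transform $\EE[e^{\mathrm{tr}(\Theta\mat{X})}]=|\mathrm{I}_d-2\mat{S}\Theta|^{-\nu/2}$ at $\Theta=0$ shows each entrywise moment is polynomial in $\nu$ for all $\nu>d-1$---would make the analytic continuation step airtight.
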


    \begin{proof}[\bf Proof of Lemma~\ref{lem:Leblanc.2012.boundary.Lemma.1}]
        Let $\mat{Y} \leqdef \mat{S}^{-1/2} \hspace{0.3mm} \mat{X} \, \mat{S}^{-1/2} \sim \mathrm{Wishart}_d(\nu,\mathrm{I}_d)$.
        It was shown by \citet[p.308-310]{MR2066255} (another source could be \citet[p.66]{MR347003}, or \citet[Theorem~3.2]{MR1868979}, although the latter is less explicit) that
        \begin{align}
            \EE[\mat{Y}]
            &= \nu \, \mathrm{I}_d, \\[1mm]
            \EE[\mat{Y}^2]
            &= \nu \, \mathrm{I}_d \, \mathrm{tr}(\mathrm{I}_d) + (\nu^2 + \nu) \, \mathrm{I}_d^{\hspace{0.3mm}2} = \nu \hspace{0.3mm} d \, \mathrm{I}_d + (\nu^2 + \nu) \, \mathrm{I}_d, \\[1mm]
            \EE[\mat{Y}^3]
            &= \nu \, \mathrm{I}_d \, (\mathrm{tr}(\mathrm{I}_d))^2 + (\nu^2 + \nu) \, \left(\mathrm{I}_d \, \mathrm{tr}(\mathrm{I}_d^{\hspace{0.3mm}2}) + 2 \, \mathrm{I}_d^{\hspace{0.3mm}2} \, \mathrm{tr}(\mathrm{I}_d)\right) + (\nu^3 + 3 \, \nu^2 + 4 \, \nu) \, \mathrm{I}_d^{\hspace{0.3mm}3} \notag \\
            &= \nu \hspace{0.3mm} d^{\hspace{0.3mm}2} \, \mathrm{I}_d + 3 \, (\nu^2 + \nu) \hspace{0.3mm} d \, \mathrm{I}_d + (\nu^3 + 3 \, \nu^2 + 4 \, \nu) \, \mathrm{I}_d, \\[1mm]
            \EE[\mat{Y}^4]
            &= \nu \, \mathrm{I}_d \, (\mathrm{tr}(\mathrm{I}_d))^3 + 3 \, (\nu^2 + \nu) \, \left(\mathrm{I}_d \, \mathrm{tr}(\mathrm{I}_d) \, \mathrm{tr}(\mathrm{I}_d^{\hspace{0.3mm}2}) + \mathrm{I}_d^{\hspace{0.3mm}2} \, (\mathrm{tr}(\mathrm{I}_d))^2\right) + (\nu^3 + 3 \, \nu^2 + 4 \, \nu) \, \left(\mathrm{I}_d \, \mathrm{tr}(\mathrm{I}_d^{\hspace{0.3mm}3}) + 3 \, \mathrm{I}_d^{\hspace{0.3mm}3} \, \mathrm{tr}(\mathrm{I}_d)\right) \notag \\
            &\quad+ (2 \, \nu^3 + 5 \, \nu^2 + 5 \, \nu) \, \mathrm{I}_d^{\hspace{0.3mm}2} \, \mathrm{tr}(\mathrm{I}_d^{\hspace{0.3mm}2}) + (\nu^4 + 6 \, \nu^3 + 21 \, \nu^2 + 20 \, \nu) \, \mathrm{I}_d^4 \notag \\
            &= \nu \hspace{0.3mm} d^{\hspace{0.3mm}3} \, \mathrm{I}_d + 6 \, (\nu^2 + \nu) \hspace{0.3mm} d^{\hspace{0.3mm}2} \, \mathrm{I}_d + (6 \, \nu^3 + 17 \, \nu^2 + 21 \, \nu) \hspace{0.3mm} d \, \mathrm{I}_d + (\nu^4 + 6 \, \nu^3 + 21 \, \nu^2 + 20 \, \nu) \, \mathrm{I}_d,
        \end{align}
        from which we deduce the following:
        \begin{align}
            \EE[\mat{Y} - \nu \, \mathrm{I}_d]
            &= 0, \\[1mm]
            \EE[(\mat{Y} - \nu \, \mathrm{I}_d)^2]
            &= \EE[\mat{Y}^2] - (\nu \, \mathrm{I}_d)^2 = \left\{\nu \hspace{0.3mm} d \, \mathrm{I}_d + (\nu^2 + \nu) \, \mathrm{I}_d\right\} - \nu^2 \, \mathrm{I}_d = \nu \, (d + 1) \, \mathrm{I}_d, \\[1mm]
            \EE[(\mat{Y} - \nu \, \mathrm{I}_d)^3]
            &= \EE[\mat{Y}^3] - 3 \, \nu \, \EE[\mat{Y}^2] + 2 \, (\nu \, \mathrm{I}_d)^3 \notag \\
            &= \left\{\nu \hspace{0.3mm} d^{\hspace{0.3mm}2} \, \mathrm{I}_d + 3 \, (\nu^2 + \nu) \hspace{0.3mm} d \, \mathrm{I}_d + (\nu^3 + 3 \, \nu^2 + 4 \, \nu) \, \mathrm{I}_d\right\} - 3 \, \nu \, \mathrm{I}_d \, \left\{\nu \hspace{0.3mm} d \, \mathrm{I}_d + (\nu^2 + \nu) \, \mathrm{I}_d\right\} + 2 \, \nu^3 \, \mathrm{I}_d^{\hspace{0.3mm}3} \notag \\
            &= \nu \, (d^{\hspace{0.3mm}2} + 3 \hspace{0.3mm} d + 4) \, \mathrm{I}_d \\[1mm]
            \EE[(\mat{Y} - \nu \, \mathrm{I}_d)^4]
            &= \EE[\mat{Y}^4] - 4 \, (\nu \, \mathrm{I}_d) \, \EE[\mat{Y}^3] + 6 \, (\nu \, \mathrm{I}_d)^2 \, \EE[\mat{Y}^2] - 3 \, (\nu \, \mathrm{I}_d)^4 \notag \\
            &= \nu \hspace{0.3mm} d^{\hspace{0.3mm}3} \, \mathrm{I}_d + 6 \, (\nu^2 + \nu) \hspace{0.3mm} d^{\hspace{0.3mm}2} \, \mathrm{I}_d + (6 \, \nu^3 + 17 \, \nu^2 + 21 \, \nu) \hspace{0.3mm} d \, \mathrm{I}_d  + (\nu^4 + 6 \, \nu^3 + 21 \, \nu^2 + 20 \, \nu) \, \mathrm{I}_d \notag \\
            &\quad- 4 \, \nu \, \mathrm{I}_d \, \left\{\nu \hspace{0.3mm} d^{\hspace{0.3mm}2} \, \mathrm{I}_d + 3 \, (\nu^2 + \nu) \hspace{0.3mm} d \, \mathrm{I}_d + (\nu^3 + 3 \, \nu^2 + 4 \, \nu) \, \mathrm{I}_d\right\} + 6 \, \nu^2 \, \mathrm{I}_d^{\hspace{0.3mm}2} \, \left\{\nu \hspace{0.3mm} d \, \mathrm{I}_d + (\nu^2 + \nu) \, \mathrm{I}_d\right\} - 3 \, \nu^4 \, \mathrm{I}_d^4 \notag \\
            &= \nu^2 \, (2 \hspace{0.3mm} d^{\hspace{0.3mm}2} + 5 \hspace{0.3mm} d + 5) \, \mathrm{I}_d + \nu \, (d^{\hspace{0.3mm}3} + 6 \hspace{0.3mm} d^{\hspace{0.3mm}2} + 21 \hspace{0.3mm} d + 20) \, \mathrm{I}_d.
        \end{align}
        By the linearity of expectations, we have
        \begin{equation}
            \EE\left[\mathrm{tr}(\Delta_{\nu,\mathrm{I}_d}^k)\right] = (2\nu)^{-k/2} \, \mathrm{tr}\left(\EE[(\mat{Y} - \nu \, \mathrm{I}_d)^k]\right),
            \quad \text{for any } k\in \N.
        \end{equation}
        The conclusion follows.
    \end{proof}

    We can also estimate the moments of Lemma~\ref{lem:Leblanc.2012.boundary.Lemma.1} on various events.
    The lemma below is used to estimate the $\asymp \nu^{-1/2}$ errors in \eqref{eq:estimate.I.begin} of the proof of Theorem~\ref{thm:total.variation}.

    \begin{lemma}\label{lem:Leblanc.2012.boundary.Lemma.1.with.set.A}
        Let $\nu > d - 1$ and $\mat{S}\in \mathcal{S}_{++}^{\hspace{0.3mm}d}$ be given, and let $A\in \mathscr{B}(\R^d)$ be a Borel set.
        If $\mat{X}\sim \mathrm{Wishart}_d(\nu,\mat{S})$ according to \eqref{eq:Wishart.density}, then, for $\nu$ large enough,
        \begin{align}
            &\left|\EE\left[\mathrm{tr}(\Delta_{\nu,\mat{S}}) \ind_{\{\bb{\lambda}(\Delta_{\nu,\mat{S}})\in A\}}\right]\right| \leq d^{\hspace{0.3mm}3/2} \, \left(\PP\left(\bb{\lambda}(\Delta_{\nu,\mat{S}})\in A^c\right)\right)^{1/2}\hspace{-0.5mm}, \label{eq:thm:central.moments.eq.1.set.A} \\[1mm]
            &\left|\EE\left[\mathrm{tr}(\Delta_{\nu,\mat{S}}^3) \ind_{\{\bb{\lambda}(\Delta_{\nu,\mat{S}})\in A\}}\right] - \nu^{-1/2} \cdot \frac{d (d^{\hspace{0.3mm}2} + 3 \hspace{0.3mm} d + 4)}{2 \sqrt{2}}\right| \leq 3 \hspace{0.3mm} d^{\hspace{0.3mm}5/2} \, \left(\PP\left(\bb{\lambda}(\Delta_{\nu,\mat{S}})\in A^c\right)\right)^{1/4}\hspace{-0.5mm}, \label{eq:thm:central.moments.eq.3.set.A}
        \end{align}
        where recall $\Delta_{\nu,\mat{S}} \leqdef (\hspace{-0.5mm} \sqrt{2\nu} \, \mat{S})^{-1/2} (\mat{X} - \nu \hspace{0.3mm} \mat{S}) (\hspace{-0.5mm} \sqrt{2\nu} \, \mat{S})^{-1/2}$.
    \end{lemma}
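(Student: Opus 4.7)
The plan is to reduce each bound to an unconditional moment estimate via an indicator-complement identity, and then handle the moment estimate with a Hölder-type inequality whose exponent is chosen to match the power of $\PP(\bb{\lambda}(\Delta_{\nu,\mat{S}})\in A^c)$ appearing on the right-hand side. Concretely, for any integrable random variable $Z$, write
\begin{equation*}
    \EE[Z \ind_{\{\bb{\lambda}(\Delta_{\nu,\mat{S}})\in A\}}] - \EE[Z] = - \EE[Z \ind_{\{\bb{\lambda}(\Delta_{\nu,\mat{S}})\in A^c\}}],
\end{equation*}
and apply this with $Z = \mathrm{tr}(\Delta_{\nu,\mat{S}})$ and $Z = \mathrm{tr}(\Delta_{\nu,\mat{S}}^3)$. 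The values of $\EE[Z]$ are $0$ and $\nu^{-1/2} \, d(d^{\hspace{0.3mm}2}+3d+4)/(2\sqrt{2})$, respectively, by Lemma~\ref{lem:Leblanc.2012.boundary.Lemma.1}, which matches the constants being subtracted in the statement.

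For \eqref{eq:thm:central.moments.eq.1.set.A}, I would apply Cauchy-Schwarz to get
\begin{equation*}
    |\EE[\mathrm{tr}(\Delta_{\nu,\mat{S}}) \ind_{\{\bb{\lambda}(\Delta_{\nu,\mat{S}})\in A^c\}}]| \leq \bigl(\EE[(\mathrm{tr}(\Delta_{\nu,\mat{S}}))^2]\bigr)^{1/2} \, \bigl(\PP(\bb{\lambda}(\Delta_{\nu,\mat{S}})\in A^c)\bigr)^{1/2},
\end{equation*}
and then use the elementary inequality $(\sum_{i=1}^d \lambda_i)^2 \leq d \sum_{i=1}^d \lambda_i^2$ to get $(\mathrm{tr}(\Delta_{\nu,\mat{S}}))^2 \leq d \, \mathrm{tr}(\Delta_{\nu,\mat{S}}^2)$. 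Lemma~\ref{lem:Leblanc.2012.boundary.Lemma.1} gives $\EE[\mathrm{tr}(\Delta_{\nu,\mat{S}}^2)] = d(d+1)/2 \leq d^{\hspace{0.3mm}2}$, yielding the bound $d^{\hspace{0.3mm}3/2}\,\bigl(\PP(\bb{\lambda}(\Delta_{\nu,\mat{S}})\in A^c)\bigr)^{1/2}$.

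For \eqref{eq:thm:central.moments.eq.3.set.A}, the exponent $1/4$ on the probability forces the choice of conjugate Hölder exponents $p=4/3$ and $q=4$:
\begin{equation*}
    |\EE[\mathrm{tr}(\Delta_{\nu,\mat{S}}^3) \ind_{\{\bb{\lambda}(\Delta_{\nu,\mat{S}})\in A^c\}}]| \leq \bigl(\EE[|\mathrm{tr}(\Delta_{\nu,\mat{S}}^3)|^{4/3}]\bigr)^{3/4} \, \bigl(\PP(\bb{\lambda}(\Delta_{\nu,\mat{S}})\in A^c)\bigr)^{1/4}.
\end{equation*}
To bound the $L^{4/3}$-moment, I would use Hölder's inequality on the eigenvalue sum, $|\sum_{i=1}^d \lambda_i^3| \leq \sum_{i=1}^d |\lambda_i|^3 \leq d^{\hspace{0.3mm}1/4} \bigl(\sum_{i=1}^d \lambda_i^4\bigr)^{3/4}$, so $|\mathrm{tr}(\Delta_{\nu,\mat{S}}^3)|^{4/3} \leq d^{\hspace{0.3mm}1/3} \, \mathrm{tr}(\Delta_{\nu,\mat{S}}^4)$. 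Taking expectations and using the fourth-moment formula of Lemma~\ref{lem:Leblanc.2012.boundary.Lemma.1}, the leading term $d(2d^{\hspace{0.3mm}2}+5d+5)/4 \leq 3 d^{\hspace{0.3mm}3}$ dominates for $\nu$ large enough, giving $\EE[|\mathrm{tr}(\Delta_{\nu,\mat{S}}^3)|^{4/3}] \leq 3 d^{\hspace{0.3mm}10/3}$, so its $3/4$ power is at most $3^{3/4} d^{\hspace{0.3mm}5/2} \leq 3 d^{\hspace{0.3mm}5/2}$.

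There is no substantial obstacle here since all the hard work was done in Lemma~\ref{lem:Leblanc.2012.boundary.Lemma.1}; the only thing that required a moment of thought is picking Hölder exponents so that the fourth power of the probability matches the available fourth-moment bound on $\Delta_{\nu,\mat{S}}$, rather than naively trying Cauchy-Schwarz (which would demand the sixth moment of $\Delta_{\nu,\mat{S}}$ and give a weaker $1/2$-power).
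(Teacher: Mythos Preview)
Your proof is correct and follows essentially the same route as the paper: the indicator-complement identity to reduce to $\EE[Z\,\ind_{A^c}]$, Cauchy--Schwarz for \eqref{eq:thm:central.moments.eq.1.set.A} and H\"older with exponents $4/3$ and $4$ for \eqref{eq:thm:central.moments.eq.3.set.A}, combined with the eigenvalue inequalities $(\mathrm{tr}\,\Delta)^2\leq d\,\mathrm{tr}(\Delta^2)$ and $|\mathrm{tr}(\Delta^3)|^{4/3}\leq d^{1/3}\,\mathrm{tr}(\Delta^4)$ and the trace moments from Lemma~\ref{lem:Leblanc.2012.boundary.Lemma.1}. The numerical checks you give ($d(d+1)/2\leq d^2$ and $d(2d^2+5d+5)/4\leq 3d^3$ for $\nu$ large) are exactly what the paper relies on implicitly.
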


    \begin{proof}[\bf Proof of Lemma~\ref{lem:Leblanc.2012.boundary.Lemma.1.with.set.A}]
        By Lemma~\ref{lem:Leblanc.2012.boundary.Lemma.1}, the Cauchy-Schwarz inequality, and Jensen's inequality ($(\mathrm{tr}(\Delta_{\nu,\mat{S}}))^2 \leq d \cdot \mathrm{tr}(\Delta_{\nu,\mat{S}}^2)$), we have
        \begin{equation}
            \begin{aligned}
                \left|\EE\left[\mathrm{tr}(\Delta_{\nu,\mat{S}}) \ind_{\{\bb{\lambda}(\Delta_{\nu,\mat{S}})\in A\}}\right]\right|
                &= \left|\EE\left[\mathrm{tr}(\Delta_{\nu,\mat{S}}) \ind_{\{\bb{\lambda}(\Delta_{\nu,\mat{S}})\in A^c\}}\right]\right| \leq \left(\EE\left[(\mathrm{tr}(\Delta_{\nu,\mat{S}}))^2\right]\right)^{1/2} \left(\PP\left(\bb{\lambda}(\Delta_{\nu,\mat{S}})\in A^c\right)\right)^{1/2} \\[1mm]
                &\leq \left(d \cdot \EE\left[\mathrm{tr}(\Delta_{\nu,\mat{S}}^2)\right]\right)^{1/2} \left(\PP\left(\bb{\lambda}(\Delta_{\nu,\mat{S}})\in A^c\right)\right)^{1/2} \leq d^{\hspace{0.3mm}3/2} \, \left(\PP\left(\bb{\lambda}(\Delta_{\nu,\mat{S}})\in A^c\right)\right)^{1/2}\hspace{-0.5mm},
            \end{aligned}
        \end{equation}
        which proves \eqref{eq:thm:central.moments.eq.1.set.A}.
        Similarly, by Lemma~\ref{lem:Leblanc.2012.boundary.Lemma.1}, Holder's inequality, and Jensen's inequality ($(\mathrm{tr}(\Delta_{\nu,\mat{S}}^3))^{4/3} \leq d^{\hspace{0.3mm}1/3} \mathrm{tr}(\Delta_{\nu,\mat{S}}^4)$), we have, for $\nu$ large enough,
        \begin{equation}
            \begin{aligned}
                \left|\EE\left[\mathrm{tr}(\Delta_{\nu,\mat{S}}^3) \ind_{\{\bb{\lambda}(\Delta_{\nu,\mat{S}})\in A\}}\right] - \nu^{-1/2} \cdot \frac{d (d^{\hspace{0.3mm}2} + 3 \hspace{0.3mm} d + 4)}{2 \sqrt{2}}\right|
                &= \left|\EE\left[\mathrm{tr}(\Delta_{\nu,\mat{S}}^3) \ind_{\{\bb{\lambda}(\Delta_{\nu,\mat{S}})\in A^c\}}\right]\right| \leq \left(\EE\left[(\mathrm{tr}(\Delta_{\nu,\mat{S}}^3))^{4/3}\right]\right)^{3/4} \left(\PP\left(\bb{\lambda}(\Delta_{\nu,\mat{S}})\in A^c\right)\right)^{1/4} \\[1mm]
                &\leq \left(d^{\hspace{0.3mm}1/3} \, \EE\left[\mathrm{tr}(\Delta_{\nu,\mat{S}}^4)\right]\right)^{3/4} \left(\PP\left(\bb{\lambda}(\Delta_{\nu,\mat{S}})\in A^c\right)\right)^{1/4} \leq 3 \hspace{0.3mm} d^{\hspace{0.3mm}5/2} \, \left(\PP\left(\bb{\lambda}(\Delta_{\nu,\mat{S}})\in A^c\right)\right)^{1/4}\hspace{-0.5mm},
            \end{aligned}
        \end{equation}
        which proves \eqref{eq:thm:central.moments.eq.3.set.A}.
        This ends the proof.
    \end{proof}

    In the next lemma, we bound the density of the $\mathrm{Wishart}_d(\nu,\mat{M})$ distribution from \eqref{eq:Wishart.density} when $\nu > d + 1$.

    \begin{lemma}\label{lem:Wishart.density.bound}
        If $\nu > d + 1$ and $\mat{M}\in \mathcal{S}_{++}^{\hspace{0.3mm}d}$, then
        \begin{equation}\label{eq:lem:Wishart.density.bound}
            \sup_{\mat{X}\in \mathcal{S}_{++}^{\hspace{0.3mm}d}} K_{\nu,\mat{M}}(\mat{X}) \leq \frac{(2\pi / e)^{-\frac{d(d + 1)}{4}} |\mat{M}|^{-(d + 1)/2}}{(2 e)^{d/2} (\nu - (d + 1))^{d(d + 1)/4}}.
        \end{equation}
    \end{lemma}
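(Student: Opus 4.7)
The plan is to reduce to the identity-scale case by a congruence substitution, maximize the resulting unnormalized density eigenvalue by eigenvalue, and then handle the normalizing constant with Stirling's lower bound on $\Gamma$. First, I would substitute $\mat{Y} = \mat{M}^{-1/2} \mat{X} \, \mat{M}^{-1/2}$. Since this is a bijection of $\mathcal{S}_{++}^{\hspace{0.3mm}d}$ and leaves $|\mat{M}^{-1}\mat{X}|$ and $\mathrm{tr}(\mat{M}^{-1}\mat{X})$ invariant, the factor $|\mat{M}|^{-(d+1)/2}$ (from the density \eqref{eq:Wishart.density}) comes out as a prefactor and we only need to bound
\begin{equation}
\sup_{\mat{Y}\in \mathcal{S}_{++}^{\hspace{0.3mm}d}} |\mat{Y}|^{(\nu-d-1)/2}\exp(-\tfrac{1}{2}\mathrm{tr}(\mat{Y})) \big/ \Big(2^{\nu d/2}\pi^{d(d-1)/4}\prod_{i=1}^d\Gamma(\tfrac{\nu-i+1}{2})\Big).
\end{equation}

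Next, I would parametrize $\mat{Y}$ by its eigenvalues $\lambda_1,\dots,\lambda_d>0$, so the numerator becomes $\prod_i \lambda_i^{(\nu-d-1)/2}e^{-\lambda_i/2}$, which is separable. Since $\nu>d+1$, each factor $\lambda\mapsto \lambda^{(\nu-d-1)/2}e^{-\lambda/2}$ is uniquely maximized at $\lambda = \nu-d-1$, yielding value $((\nu-d-1)/e)^{(\nu-d-1)/2}$. Multiplying across $i=1,\dots,d$ gives supremum $((\nu-d-1)/e)^{d(\nu-d-1)/2}$ (this is attained, e.g., at $\mat{Y} = (\nu-d-1)\mathrm{I}_d$, i.e., $\mat{X} = (\nu-d-1)\mat{M}$).

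For the normalizing constant, I would apply the Robbins/Stirling inequality $\Gamma(z+1)\geq\sqrt{2\pi z}\,(z/e)^z$ (valid for all $z>0$) with $z_i = (\nu-i-1)/2$. Each $z_i>0$ by the hypothesis $\nu>d+1$, and moreover $z_i\geq N/2$ where $N\leqdef \nu-d-1$ since $i\leq d$. Because $x\mapsto x^c$ is increasing on $(0,\infty)$ for $c>0$, this yields $z_i^{z_i}\geq (N/2)^{z_i}$; taking the product and collecting powers of $N$ gives a clean lower bound on $\prod_i\Gamma(z_i+1)$ in which the total exponent of $N$ is $\sum_i (z_i + 1/2) = dN/2 + d(d+1)/4$. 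The contribution $N^{dN/2}$ exactly cancels the factor coming from the supremum of the numerator, leaving $N^{d(d+1)/4}$ in the denominator, which is the $\nu$-dependence in \eqref{eq:lem:Wishart.density.bound}.

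The main obstacle is purely bookkeeping — reconciling the collected powers of $2$, $\pi$, $e$ and $N$. Writing $\sum_{i=1}^d(\nu-i-1)= dN+d(d-1)/2$, one checks that the $2$-exponent collapses from $\nu d/2 - dN/2 - d(d-1)/4$ to $d(d+3)/4$, the $\pi$-exponent combines as $d(d-1)/4 + d/2 = d(d+1)/4$, and the $e$-exponent $-dN/2-d(d-1)/4$ cancels the $e^{-dN/2}$ in the numerator's supremum and leaves $e^{d(d-1)/4}$. Finally, the identity
\begin{equation}
\frac{e^{d(d-1)/4}}{2^{d(d+3)/4}\,\pi^{d(d+1)/4}} \;=\; \frac{(2\pi/e)^{-d(d+1)/4}}{(2e)^{d/2}}
\end{equation}
(verified by checking the exponents of $2$, $\pi$ and $e$ on both sides) rewrites the bound in the form stated in \eqref{eq:lem:Wishart.density.bound}, completing the proof.
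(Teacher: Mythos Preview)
Your proof is correct and follows essentially the same approach as the paper: both identify the mode $\mat{X}=(\nu-d-1)\mat{M}$ (you via eigenvalue-wise optimization after the congruence reduction, the paper by direct statement), then apply the same Stirling lower bound $\Gamma(z+1)\geq\sqrt{2\pi z}\,(z/e)^z$ to each factor of the multivariate gamma, and finally use $z_i=(\nu-i-1)/2\geq(\nu-d-1)/2$ to collapse the product into powers of $N=\nu-d-1$. Your bookkeeping of the exponents of $2$, $\pi$, $e$ and $N$ checks out and matches the paper's final simplification.
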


    \begin{proof}[\bf Proof of Lemma~\ref{lem:Wishart.density.bound}]
        When $\nu > d + 1$, it is easily verified that the mode of the $\mathrm{Wishart}_d(\nu,\mat{M})$ distribution is $(\nu - (d + 1)) \, \mat{M}$, so the expression for the Wishart density in \eqref{eq:Wishart.density} yields
        \begin{equation}
            \sup_{\mat{X}\in \mathcal{S}_{++}^{\hspace{0.3mm}d}} K_{\nu,\mat{M}}(\mat{X}) = \frac{(\nu - (d + 1))^{\frac{\nu d}{2} - \frac{d(d + 1)}{2}} |\mat{M}|^{-(d + 1)/2} \exp\big(-\frac{d}{2} (\nu - (d + 1))\big)}{2^{\nu d/2} \pi^{\hspace{0.3mm}d(d-1)/4} \prod_{i=1}^d \Gamma(\frac{1}{2} (\nu - (i + 1)) + 1)}.
        \end{equation}
        From Lemma~1 in \cite{MR162751}, we know that, for all $y > 1$, $\sqrt{2\pi e} \, e^{-(y - \frac{1}{2})} (y - 1)^{y - \frac{1}{2}} \leq \Gamma(y)$, so we get
        \begin{equation}
            \sup_{\mat{X}\in \mathcal{S}_{++}^{\hspace{0.3mm}d}} K_{\nu,\mat{M}}(\mat{X}) \leq \frac{(\nu - (d + 1))^{\frac{\nu d}{2} - \frac{d(d + 1)}{2}} |\mat{M}|^{-(d + 1)/2} \, e^{-\frac{\nu d}{2} + \frac{d}{2}(d + 1)}}{2^{\nu d/2} \pi^{\hspace{0.3mm}d(d-1)/4} \cdot (2\pi e)^{d/2} e^{-\frac{\nu d}{2} + \frac{d}{4}(d + 1)} \big[\frac{1}{2} (\nu - (d + 1))\big]^{\frac{\nu d}{2} - \frac{d(d + 1)}{4}}} = \frac{(2\pi / e)^{-\frac{d(d + 1)}{4}} |\mat{M}|^{-(d + 1)/2}}{(2 e)^{d/2} (\nu - (d + 1))^{d(d + 1)/4}}.
        \end{equation}
        This ends the proof.
    \end{proof}

\section{Acronyms}\label{sec:acronyms}

\begin{tabular}{ll}
    CMB & cosmic microwave background \\
    i.i.d. & independent and identically distributed \\
    SMN & symmetric matrix-variate normal \\
    SPD & symmetric positive definite
\end{tabular}

\section{Simulation code}\label{sec:R.code}

Supplementary material related to this article can be found online at \href{https://doi.org/10.1016/j.jmva.2021.104}{https://doi.org/10.1016/j.jmva.2021.104}.

\section*{Acknowledgments}

First, I would like to thank Donald Richards for his indications on how to calculate the moments in Lemma~\ref{lem:Leblanc.2012.boundary.Lemma.1}.
I also thank the Editor, the Associate Editor and the referees for their insightful remarks which led to improvements in the presentation of this paper.
The author is supported by postdoctoral fellowships from the NSERC (PDF) and the FRQNT (B3X supplement and B3XR).

%
%

\phantomsection
\addcontentsline{toc}{chapter}{References}

\bibliographystyle{myjmva}
\bibliography{Ouimet_2021_LLT_Wishart_bib}

\end{document}